\documentclass[
	a4paper,
	10pt,
	DIV=9,
]
{scrartcl}

\usepackage[utf8]{inputenc}
\usepackage{amsfonts}
\usepackage{amssymb}
\usepackage{framed}
\usepackage{amsmath}
\usepackage[space]{grffile}
\usepackage{mathtools}
\usepackage{amsthm}
\usepackage{enumerate}
\usepackage[theoremdefs,final]{latexdev}
\usepackage[numbers]{natbib}
\usepackage{algpseudocode}
\usepackage{authblk}
\usepackage[all]{xy}
\usepackage[scaled=0.9]{helvet}

\newtheorem{algorithm}{Algorithm}



	\providecommand{\mathup}{\mathrm}




	\providecommand{\abs}[1]{\lvert#1\rvert}
	\newcommand{\R}{{\mathbb R}}
	
	\newcommand{\Z}{{\mathbb Z}}


	\newcommand{\dd}{\mathup{d}}
	\newcommand{\ud}{\,\dd}



	\DeclareMathOperator{\grad}{grad}
	\DeclareMathOperator{\divv}{div}

	\newcommand{\LieD}{\mathcal{L}}
	\newcommand{\interior}{\mathup{i}}
	
	\newcommand{\vol}{\mathup{vol}}
	
	\providecommand{\g}{\mathsf{g}}
	
	

	\providecommand{\norm}[1]{\lVert#1\rVert}
	\providecommand{\pair}[1]{\langle #1 \rangle}

	\providecommand{\id}{\mathup{id}}
	
	


	


	
	\newcommand{\Diff}{\mathrm{Diff}}
	\newcommand{\Xcal}{\mathfrak{X}}
	\newcommand{\Diffvol}{{\Diff_{\vol}}}

	\newcommand{\Dens}{\mathrm{Dens}}
	\newcommand{\Densbar}{\overline{\Dens}}


	\providecommand{\todo}[1]{}

	\def\X{\mathfrak X}

	\def\ph{\varphi}

	\def\i{^{-1}}

	\let\on=\operatorname


	\newcommand{\G}{G}
	\newcommand{\Gbar}{\bar{G}}
	\newcommand{\GI}{G^I}
	\newcommand{\nablaI}{\nabla^I}
	\newcommand{\nablaF}{\nabla^F}
	\newcommand{\dist}{d}
	\newcommand{\distF}{\bar{d}_F}
	\newcommand{\distFF}{\bar{d}_{\it FF}}
	\newcommand{\distorb}{\bar{d}_{\mathrm{Orb}}}
	\newcommand{\distdiv}{d_\mathrm{div}}
	\newcommand{\distI}{d_I}
	\newcommand{\distbar}{\bar{d}}
	\newcommand{\Gdiv}{G^{\mathrm{div}}}
	\newcommand{\GF}{\bar{G}^{F}}
	\newcommand{\GFF}{\bar{G}^{\it FF}}
	\newcommand{\Gorb}{\bar{G}^{\mathrm{Orb}}}


\author[1]{Martin Bauer\thanks{Supported by FWF-Project  ``Geometry of shape spaces and related infinite dimensional spaces'' (P246251)}}
\author[2]{Sarang Joshi}
\author[3]{Klas Modin\thanks{Supported by the Swedish Research Council and the Swedish Foundation for Strategic Research.}}
\affil[1]{Faculty of Mathematics,
University of Vienna, Email: bauer.martin@univie.ac.at}
\affil[2]{Department of Bioengineering, Scientific Computing and Imaging Institute, University of Utah, Email: sjoshi@sci.utah.edu}
\affil[3]{Department of Mathematical Sciences, Chalmers University of Technology and the University of Gothenburg, Email: klas.modin@chalmers.se}

\title{Diffeomorphic density matching by optimal information transport}

\begin{document}
\maketitle

\begin{abstract}
	We address the following problem: given two smooth densities on a manifold, find an optimal diffeomorphism that transforms one density into the other.
	Our framework builds on connections between the Fisher--Rao information metric on the space of probability densities and right-invariant metrics on the infinite-dimensional manifold of diffeomorphisms.
	This \emph{optimal information transport}, and modifications thereof, allows us to construct numerical algorithms for density matching. The algorithms are inherently more efficient than those based on optimal mass transport or diffeomorphic registration.
	Our methods have applications in medical image registration, texture mapping, image morphing, non-uniform random sampling, and mesh adaptivity.
	Some of these applications are illustrated in examples.

	\textbf{Keywords:} density matching, information geometry, Fisher--Rao metric, optimal transport, image registration, diffeomorphism groups, random sampling
 
	\textbf{MSC2010:} 58E50, 49Q10, 58E10

\end{abstract}
\tableofcontents
\listoftodos

\section{Introduction}\label{sec:intro}

In this paper we study the problem of finding diffeomorphic (bijective and smooth) transformations between two densities (volume forms) on an $n$--manifold~$M$ equipped with a Riemannian metric~$\g$ and volume form~$\vol$.
This has applications in many image analysis problems and is an extension of the classical image registration problem. 
Specific applications of density matching include: 
medical image registration~\cite{HaZhTaAn2004,ReHaPrMeTa2009,YiHoLi2009,Gorbunova2012786}; 
texture mapping in computer graphics~\cite{DoTa2010,Zh_et_al_2013}; 
image morphing techniques~\cite{ZhYa_et_al_2007,RaPeDeBe2012};
random sampling in Bayesian inference~\cite{MoMa2012,Re2013};
mesh adaptivity in computational methods for PDEs~\cite{BuHuRu2009}.
A more extensive list of applications and algorithms is given in~\cite{PaPeOu2014}.

The difference between classical image registration (cf.~\cite{Yo2010}) and density matching is the way transformations act.
In image registration, transformations act on positive scalar functions (images) by composition from the right.
In density matching, transformations act by pullback of volume forms: if the density is represented by a function $I\colon M\to \R^{+}$, the action is given by
\begin{subequations}\label{eq:pullback_transformations}
\begin{equation}\label{eq:pullback_transformations1}
	(\varphi,I)\mapsto \abs{D\varphi}I\circ\varphi ,
\end{equation}
where $\varphi\colon M\to M$ is the transformation and $\abs{D\varphi}$ is its Jacobian determinant.

When studying geometric aspects of density matching, it is convenient to use the framework of exterior calculus of differential forms.
A density is then thought of as a volume form, and the action is given by pullback:
\begin{equation}\label{eq:pullback_transformations2}
	(\varphi,\mu)\mapsto \varphi^*\mu .
\end{equation}
\end{subequations}
The volume form $\mu$ is related to $I$ by $\mu = I\vol$.
$I$ is the Radon--Nikodym derivative of $\mu$ with respect to $\vol$.
For the convenience, we use both the function and the exterior calculus point-of-view throughout the paper; the relation between functions and volume forms is always understood to be $\mu=I\vol$.

Let $\Diff(M)$ and $\Dens(M)$ respectively denote diffeomorphisms and normalized, smooth densities on~$M$.
Both $\Diff(M)$ and $\Dens(M)$ are infinite-dimensional manifolds (see \autoref{sec:setting} for details).
Let $\G$ denote a Riemannian metric on $\Diff(M)$ with corresponding distance function
\begin{equation}\label{eq:riemann_dist}
	\dist^{2}(\varphi,\psi) \coloneqq \inf_{\gamma(0)=\varphi, \gamma(1)=\psi} \int_{0}^{1} \G_{\gamma(t)}\big(\dot\gamma(t),\dot\gamma(t)\big)\ud t .
\end{equation}
Likewise, let $\Gbar$ denote a Riemannian metric on $\Dens(M)$ with distance function $\distbar$.
We are interested in special cases of the following two, generally formulated density matching problems:

\begin{problem}[Exact density matching] \label{prob:exact}
	Given $\mu_0,\mu_1\in \Dens(M)$, find $\varphi\in\Diff(M)$ minimizing $\dist(\id,\varphi)^{2}$ under the constraint 
	\begin{subequations}		
	\begin{equation}\label{eq:constraint_exact}
		\mu_1 = \varphi_{*}\mu_0 \coloneqq (\varphi^{-1})^{*}\mu_0.
	\end{equation}
	Equivalently, using intensity functions $I_0$ and $I_1$, the constraint is
	\begin{equation}\label{eq:constraint_exact_intensity}
		I_1 = \abs{D\varphi^{-1}}(I_0\circ \varphi^{-1}).
	\end{equation}
	\end{subequations}
\end{problem}

\begin{problem}[Inexact density matching] \label{prob:inexact}
	Given $\mu_0,\mu_1\in \Dens(M)$, find $\varphi\in\Diff(M)$ minimizing 
	\begin{equation}\label{eq:inexact_functional}
		E(\varphi) \coloneqq \sigma \dist^{2}(\id,\varphi) + \distbar^{2}(\varphi_*\mu_0,\mu_1)^{2}, \quad
		\sigma > 0\, .
	\end{equation}
	The first term in \eqref{eq:inexact_functional} is a \emph{regularity measure}: it penalizes irregularity of $\varphi$.
	The second term is a \emph{similarity measure}: it penalizes dissimilarity between $\varphi_*\mu_0$ and $\mu_1$.
	The parameter $\sigma$ is balancing the two criteria.
\end{problem}

There is no intrinsic choice of $\G$ and $\Gbar$; they are free to be specified and evaluated in the specific application.
The following choices are, however, typically considered.
\begin{enumerate}
	\item[(i)]\label{it:omt} For \autoref{prob:exact}, the standard choice is distance-squared \emph{optimal mass transport} (OMT), corresponding to the non-invariant $L^{2}$ metric
	\begin{equation}\label{eq:L2noninvariant_metric}
		\begin{split}
			\G_{\varphi}(U,V) &= \int_{M} \g(U,V)\mu_0, \quad U,V\in T_{\varphi}\Diff(M) \, , \\
			&= \int_M \g(u,v)\varphi_*\mu_0, \quad u\coloneqq U\circ\varphi^{-1},v\coloneqq V\circ\varphi^{-1} \, .
		\end{split}
	\end{equation}
	This choice of metric induces the Wasserstein distance on $\Dens(M)$~\cite{Ot2001,Vi2009}.

	\item[(ii)]\label{it:lddmm} For \autoref{prob:inexact}, a common choice~\cite{TrYo2005,HoTrYo2009} is the right-invariant $H^{k}_{\alpha}$ metric
	\begin{equation}\label{eq:H1sobolev}
		\G_{\varphi}(U,V) = \int_M \g\left( (1-\alpha\Delta)^{k}u,v \right)\vol\,, 
	\end{equation}
	where $\Delta$ is the Laplace--de~Rham operator lifted to vector fields (see \autoref{sec:setting} for details), and, as similarity measure, the $L^2$ norm
	\begin{equation}\label{eq:L2innerproduct}
		\distbar^2(\mu_0,\mu_1) = \norm{\mu_0-\mu_1}_{L^{2}}^{2} \coloneqq \int_{M} \abs{I_0-I_1}^2\vol\, .
	\end{equation}
	This setting is similar to \emph{Large Deformation Diffeomorphic Metric Matching}~(LDDMM)~\cite{JoMi2000,MiYo2001,TrYo2005}, but with the density action~\eqref{eq:pullback_transformations} instead of the composition action.
\end{enumerate}

Both choices (i) and (ii) are computationally challenging, as they require the numerical solution of nonlinear partial differential equations (the Monge--Ampere and EPDiff equations respectively).
See \cite{PaPeOu2014} and \cite{Yo2010} for efficient and stable implementations.

In this paper we consider metrics for \autoref{prob:exact} and \autoref{prob:inexact} that reduce the computational challenge to solving Poisson equations, allowing significantly faster, semi-explicit algorithms.
Our approach is based on connections between information geometry and geodesic equations on diffeomorphisms (cf.\ \emph{topological hydrodynamics}~\cite{ArKh1998,KhWe2009}), now outlined.

\citet*{KhLeMiPr2013} found that the degenerate \emph{divergence-metric} on $\Diff(M)$, given by
\begin{equation}\label{eq:div_metric}
	\Gdiv_{\varphi}(U,V) = \int_M\divv(u)\divv(v)\vol, 
\end{equation}
induces a non-degenerate metric on the quotient space $\Diffvol(M)\backslash\Diff(M)$ of right co-sets.
Here, $\Diffvol(M)\subset\Diff(M)$ denotes volume-preserving diffeomorphisms
\begin{equation}\label{eq:diffvol}
	\varphi\in\Diffvol(M) \iff \varphi^*\vol = \vol .	
\end{equation}
From Moser's principal bundle structure $\Diffvol(M)\backslash\Diff(M) \simeq \Dens(M)$ (see~\autoref{sub:bundle_structure} for details) it follows that the divergence-metric~\eqref{eq:div_metric} induces a metric on $\Dens(M)$.
Remarkably, this is the infinite-dimensional version of the \emph{Fisher--Rao metric} (also called \emph{Fisher's information metric}), predominant in information geometry.
It is given by
\begin{equation}\label{eq:fisher_rao_metric}
	\GF_{\mu}(\alpha,\beta) = \frac{1}{4}\int_M \frac{\alpha}{\mu}\frac{\beta}{\mu}\mu ,
\end{equation}
for tangent vectors $\alpha,\beta\in T_{\mu}\Dens(M)$. 
It can be interpreted as the Hessian of relative entropy, or information divergence.

Due to the degeneracy, the geometry of the divergence-metric is not ideal: instead one would like a non-degenerate metric on $\Diff(M)$ where the projection from $\Diff(M)$ to $\Dens(M)$ is a \emph{Riemannian submersion} (see~\autoref{sub:descending_metric} for details).
As remarked in~\mbox{\cite{KhLeMiPr2013}}, none of the standard right-invariant, Sobolev-type metrics on $\Diff(M)$ have this property.
Nevertheless, it is possible to construct them: a three-parameter family was introduced in~\cite{Mo2014}, where also a complete characterization of such metrics is given.
Here we focus on a specific member of the family, namely
\begin{equation}\label{eq:H1dot}
	\GI_{\varphi}(U,V) = \int_M \g(\Delta u,v)\vol + \lambda\sum_{i=1}^{k}\int_{M}\g(u,\xi_i)\vol\, \int_{M}\g(v,\xi_i)\vol,
\end{equation}
where $\lambda>0$, $\Delta$ is the Hodge-Laplacian lifted to vector fields, and $\xi_1,\ldots,\xi_k$ is an orthonormal basis of the harmonic vector fields on~$M$ (see~\autoref{sub:notation} for details). 
We call~$\GI$ the \emph{information metric}, as it is descends to the Fisher--Rao information metric as explained in~\autoref{sub:descending_metric}.

The descending property of~\eqref{eq:H1dot}
implies \emph{horizontal geodesics}.
Such horizontal geodesics describe optimal transformations between probability densities, which leads to
\emph{optimal information transport} (OIT)---an information theoretic analogue of OMT.
The analogue to the Wasserstein distance in OMT is the spherical Hellinger distance induced by the Fisher--Rao metric. 
The analogue to Brenier's polar factorization~\cite{Br1991}, which solves the OMT problem, is the information factorization of diffeomorphisms~\cite[\S\!~5]{Mo2014}, which solves the OIT problem.
Because of the descending property, Fisher--Rao geodesics on $\Dens(M)$ can be lifted to horizontal geodesics on $\Diff(M)$. This observation is the main ingredient of our algorithms.

We consider \autoref{prob:exact} and \autoref{prob:inexact} in two different settings, leading to different algorithms.
The first contribution, developed in \autoref{sec:density_matching}, considers a background metric that is compatible in the sense that $\mu_0=\vol$. This has applications in texture mapping, random sampling, and mesh adaptivity.
Numerical examples are given in~\autoref{sec:density_matching_examples}.
The second contribution, developed in \autoref{sec:density_matching_non_compatible},  makes no compatibility assumption but uses a slightly modified optimality condition to retain efficiency.  This has applications in image morphing and image registration.
Numerical examples are given in~\autoref{sec:density_matching_non_compatible_examples}.
The computational complexity of both algorithms is significantly lower than those based on OMT or LDDMM.

The emphasis of the paper is on mathematical foundations rather than applications.
The examples in~\autoref{sec:density_matching_examples} and~\autoref{sec:density_matching_non_compatible_examples} are therefore kept simple and illustrative.

\section{Geometric foundation}\label{sec:setting}
\subsection{Notation}\label{sub:notation}

Throughout the paper, the word ``metric'' always means ``Riemannian metric'' and ``distance'' always means ``Riemannian distance''.

Let $M$ be an $n$--dimensional orientable manifold with metric $\g$.
We refer to~$\g$ as the \emph{background metric}, to distinguish it from metrics on infinite-dimensional spaces.
The background metric $\g$ induces a volume form on~$M$, denoted $\vol_{\g}$. 
In oriented coordinates $x^1,\ldots,x^n$, the expression for $\vol_{\g}$ is
\begin{equation}\label{eq:volume_form_induced}
	\vol_{\g}=\sqrt{\abs{\g}}\ \dd x^1\wedge\dots\wedge \dd x^{n},
\end{equation}
where $\abs{\g}$ denotes the determinant of the metric tensor.
When the background metric~$\g$ is clear from the context, we write $\vol$ instead of $\vol_\g$.

The total volume of $M$ with respect to $\vol$, for now assumed to be finite, is denoted
\begin{equation}\label{eq:total_volume}
	\vol(M)\coloneqq \int_M \vol.
\end{equation}

The space of smooth, real valued function on $M$ is denoted $C^{\infty}(M)$.
The space of smooth vector fields and $k$--form on $M$ are denoted $\Xcal(M)$ and $\Omega^k(M)$ respectively.
The background metric $\g$ induces the \emph{musical isomorphism} $\flat\colon \Xcal(M)\to \Omega^1(M)$ by $u^{\flat} \coloneqq \g(u,\cdot)$.
Its inverse is denoted $\sharp$, so $u = (u^{\flat})^\sharp$.
If $\alpha\in\Omega^{n}(M)$ and $\mu$ is a volume form on $M$, then $\frac{\alpha}{\mu}$ is the function on $M$ defined by
\begin{equation}\label{eq:division_vol}
	\alpha = \left(\frac{\alpha}{\mu}\right) \mu.
\end{equation}
Notice that this is used in the definition of the Fisher--Rao metric~\eqref{eq:fisher_rao_metric}.
If $u,v\in\Xcal(M)$, we sometimes use the notation $u\cdot v$ instead of $\g(u,v)$.

A volume form $\mu$ on $M$ induces a divergence, defined via
\begin{equation}
	\LieD_u \mu = \divv_{\mu}(u) \mu\,,
\end{equation}
where $\LieD_u$ denotes the Lie derivative in direction of the vector field $u\in\Xcal(M)$.
When $\mu=\vol$, we write $\divv(u)$ instead of $\divv_{\vol}(u)$.

The gradient of a function $f$ on $M$ with respect to $\g$ is defined by
\begin{equation}\label{eq:gradient_def}
	\grad_{\g} f = (\dd f)^{\sharp},
\end{equation}
where $\dd\colon\Omega^{k}(M)\to\Omega^{k+1}(M)$ is the natural differential of forms.
Again, if $\g$ is clear from the context, we simply write $\grad f$.

Recall the Laplace operator $\Delta$ defined by $\Delta f = \divv\grad f$.
We also use $\Delta$ to denote the lifted Laplace--de~Rham operator on the space of vector fields, defined by
$\Delta u = -(\delta \dd u^{\flat} + \dd \delta u^{\flat})^{\sharp}$, where $\delta\colon\Omega^{k}(M)\to\Omega^{k-1}(M)$ is the \emph{codifferential operator} (contrary to the differential $\dd$, the codifferential $\delta$ depends on the background metric~$\g$).
We sometimes denote the Laplacian by $\Delta_{\g}$ when the dependence on $\g$ needs to be stressed.
The space of harmonic vector fields is given by
\begin{equation}\label{eq:harmonic_vf}
	\mathfrak{H}(M) = \{ \xi\in\Xcal(M); \Delta_{\g} \xi = 0 \}.
\end{equation}
If $M$ is a closed manifold (compact and without boundary), then $\mathfrak{H}(M)$ is finite-dimensional.
If $(M,\g)$ is flat, then $\Delta_{\g}$ on vector fields is the standard Laplacian on functions applied elementwise.

\subsection{Space of densities and the Fisher--Rao metric}\label{sub:fisher_metric}

The space of densities $\Dens(M)$ consists of smooth volume forms with total volume $\vol(M)$:
\begin{equation}\label{eq:densM}
	\Dens(M) = \{ \mu\in\Omega^n(M); \int_M\mu = \vol(M), \mu>0 \}.
\end{equation}
We like to think of $\Dens(M)$ as an infinite-dimensional manifold.
To make this rigorous, first observe that if $M$ is compact, then the space of top-forms $\Omega^{n}(M)$ is a Fréchet space with the topology induced by the Sobolev seminorms (see~\citet[Example~1.1.5]{Ha1982} for details).
Let $c\in\R$.
Then the set $\Omega^{n}_{c}(M) = \{\alpha\in\Omega^{n}(M); \int_M\alpha = c \}$ is a closed, affine subspace of $\Omega^{n}(M)$, and $\Dens(M)$ is an open subset of $\Omega^{n}_{\vol(M)}(M)$.
Therefore, $\Dens(M)$ is a \emph{Fr\'echet manifold} (cf.~\cite[Chapter~4]{Ha1982}).
The closure of $\Dens(M)$ is a Fr\'echet manifold with boundary, given by
\begin{equation}\label{eq:densMbar}
	\Densbar(M) = \{\alpha\in\Omega^{n}_{\vol(M)}(M); \alpha \geq 0 \}.
\end{equation}
Since $\Dens(M)$ is an open subset of a closed, affine subspace of a vector space, its tangent space at $\mu$ is given by
\begin{equation}\label{eq:TdensM}
	T_{\mu}\Dens(M) = \Omega^{n}_{0}(M) .
\end{equation}
Notice that $T_{\mu}\Dens(M)$ is independent of $\mu$, so the tangent bundle is trivial $T\Dens(M)\simeq \Dens(M)\times \Omega^{n}_{0}(M)$.

As an alternative to the Fr\'echet topology discussed here, one might work with the completion of $\Dens(M)$ in the Sobolev $H^s$ topology for a differentiability index~$s$.
This space, denoted $\Dens^{s}(M)$, then becomes a \emph{Banach manifold} (see \cite{Mo2014} for details).
The results in this paper are valid in both the Fr\'echet and the Banach category.

In the case when $M$ is not compact, an infinite-dimensional manifold structure can still be given, as discussed in \autoref{sub:inf_volume}.

Recall the Fisher--Rao metric $\GF$ on $\Dens(M)$ is given by equation~\eqref{eq:fisher_rao_metric}.
This metric is weak in the Fréchet (or Banach) topology.
Nevertheless, its geodesics are well-posed.
Indeed, the astonishing property of the Fisher--Rao metric is that its geodesics are explicit.
Following~\cite{Fr1991}, we introduce the $W$--map
\begin{equation}\label{eq:sphere_map}
	W\colon \Dens(M)\to C^{\infty}(M),\quad \mu \mapsto \sqrt{\frac{\mu}{\vol}}.
\end{equation}
The infinite-dimensional sphere $S^{\infty}(M) = \{ f\in C^\infty(M); \int_M f^2\,\vol = \vol(M) \}$ is a submanifold of $C^{\infty}(M)$ and the image of $W$ is a open subset of $S^{\infty}(M)$~\cite[Theorem~3.2]{KhLeMiPr2013}.
Indeed, if $\mu\in\Dens(M)$, then
\begin{equation}\label{eq:sphere_property}
	\vol(M)=\int_{M} \mu = \int_M \frac{\mu}{\vol}\vol = \int_M W(\mu)^2 \vol \eqqcolon \norm{W(\mu)}^{2}.
\end{equation}
Let $\alpha \in T_{\mu}\Dens(M)$ and let $p \coloneqq T_\mu W\cdot \alpha = \frac{\alpha}{2\vol}\sqrt{\frac{\vol}{\mu}}$. 
Then
\begin{equation}\label{eq:sphere_prop_contin}
	\norm{p}^{2} = \int_{M} \frac{1}{4} \left(\frac{\alpha}{\vol}\right)^2 \frac{\vol}{\mu} \vol = \frac{1}{4}\int_M \left(\frac{\alpha}{\mu} \right)^{2}\mu = \GF_{\mu}(\alpha,\alpha).
\end{equation}
We have thus showed that $W$ is an isometry between $\Dens(M)$ with the Fisher--Rao metric and an open subset of~$S^{\infty}(M)$.
Since the geodesics of the infinite-dimensional sphere $S^{\infty}(M)$ are explicitly known, we obtain the geodesics on $\Dens(M)$.
Indeed, the Fisher--Rao geodesic between $\mu_0$ and $\mu_1$ is given by
\begin{equation}\label{eq:fisher_rao_geodesics}
	[0,1]\ni t\mapsto \left( 
			\frac{\sin\left((1-t)\theta\right)}{\sin\theta}f_0 + \frac{\sin\left(t\theta\right)}{\sin\theta}f_1
		\right)^2 \vol,
		\quad \theta = \arccos\left( \frac{\pair{f_0,f_1}_{L^2}}{\vol(M)} \right)\,,
\end{equation}
where $f_0 = W(\mu_0)$ and $f_1=W(\mu_1)$.

A direct consequence of the formulae~\eqref{eq:fisher_rao_geodesics} is an explicit formula for the induced geodesic distance. 
Indeed, if $\distF$ denotes the distance function of the Fisher--Rao metric, then
\begin{equation}\label{eq:FR_distance}
	\distF(\mu_0,\mu_1)= \sqrt{\vol(M)}\arccos\left(\frac{1}{\vol(M)}\int_M \sqrt{\frac{\mu_0}{\vol}\frac{\mu_1}{\vol}}\vol\right).
\end{equation}
As already mentioned, this formula for $\distF(\mu_0,\mu_1)$ is the key ingredient for our density matching algorithms.
It is a spherical version of the Hellinger distance~\cite{KhLeMiPr2013}. In \autoref{sec:relation_between_the_wasserstein_and_fisher_rao_distances}
we compare the geodesic distance of the Fisher-Rao metric to other commonly used distance functions on the space of probability measures.
\begin{remark}
	Recall that the Fisher--Rao metric~$\GF$ on $\Dens(M)$ is canonical: it does not depend on the choice of background metric.
	For the $W$--map in equation~\eqref{eq:sphere_map}, this implies that~$\vol$ can be any volume form, as long as $\mu$ is absolutely continuous with respect to it.
	In particular, as in \autoref{ex:explicit} below, it does not have to be the volume form associated with the background metric.
\end{remark}

\begin{remark}
	In information geometry~\cite{AmNa2000}, a finite-dimensional submanifold~$\Gamma\subset\Dens(M)$ is called a \emph{statistical manifold}.
	The Fisher--Rao metric on $\Dens(M)$ induces a metric on $\Gamma$;
	in local coordinates $\theta=(\theta_1,\ldots,\theta_k)\in \R^{k}$ it is given by
	\begin{align}\label{eq:fisher_rao_classical}
		G^{\Gamma}_{ij}(\theta) &= \frac{1}{4}\int_M \frac{\partial \ln p(x,\theta)}{\partial \theta_i}\frac{\partial \ln p(x,\theta)}{\partial \theta_j} p(x,\theta)\vol(x), \\
		&= \frac{1}{4}\mathrm E\left[ \frac{\partial \ln p(x,\theta)}{\partial \theta_i}\frac{\partial \ln p(x,\theta)}{\partial \theta_j} \right] \\
		&= \int_M \frac{\partial \sqrt{p(x,\theta)}}{\partial \theta_i}\frac{\partial \sqrt{p(x,\theta)}}{\partial \theta_j} \vol(x)
	\end{align}
	where $\theta\mapsto p(\cdot,\theta)\vol \in \Dens(M)$ is the local coordinate chart.  
	The tensor field $G^{\Gamma}_{ij}(\theta)$ is the classical information matrix of \citet{Fi1922}.
\end{remark}

\subsection{Group of diffeomorphisms and the information metric}\label{sub:information_metric}
The set of diffeomorphisms on $M$ is denoted $\Diff(M)$; it consists of smooth bijective mappings $M \to M$ with smooth inverses.
This set has a natural group structure, by composition of maps.
If $M$ is compact, then $\Diff(M)$ is a \emph{Fréchet Lie group}~\cite[\S\!~I.4.6]{Ha1982}, i.e., a Fréchet manifold where the group operations are smooth mappings.
The Lie algebra of $\Diff(M)$ is given by the space~$\Xcal(M)$ of smooth vector fields (tangential if $M$ has a boundary).
There is a natural choice of an $L^2$ inner product on $\Xcal(M)$, given by
\begin{equation}\label{eq:L2_inner_xcal}
	\pair{u,v}_{L^2} \coloneqq \int_M \g(u,v)\vol.
\end{equation}
The tangent space $T_{\varphi}\Diff(M)$ consists of maps $U\colon M\to TM$ with $U\circ\varphi^{-1}\in \Xcal(M)$.

As with the space of densities, one can also chose to work in the Sobolev completion $\Diff^{s}(M)$.
For large enough $s$, the set $\Diff^{s}(M)$ is a Banach manifold.
It is, however, not a Banach Lie group, because left composition is not smooth, only continuous---an issue to be carefully addressed when deriving rigorous existence results for geodesics equations on $\Diff^{s}(M)$ (see for example~\cite{EbMa1970,Mo2014}).
The case of non-compact~$M$ is discussed in~\autoref{sub:inf_volume}.

Recall that we are interested in the information metric $\GI$ on $\Diff(M)$, defined in equation~\eqref{eq:H1dot}.
Again, this metric is weak in the Fréchet (or Banach) topology.
Nevertheless, the geodesics are well-posed: local existence and uniqueness of the geodesic equation on $\Diff^{s}(M)$ with the Banach topology is given in~\cite[\S\!~3]{Mo2014};
the result is extended to $\Diff(M)$ with the Fréchet topology by standard techniques as in~\cite{EbMa1970}.

The metric $\GI$ has the property of right-invariance: if $U,V\in T_{\varphi}\Diff(M)$ then
\begin{equation}\label{eq:right_invariance}
	\GI_{\varphi}(U,V) = \GI_{\varphi\circ\psi}(U\circ\psi,V\circ\psi), \quad \forall\,\psi\in\Diff(M).
\end{equation}
This property implies that the geodesic equation can be stated in terms of the reduced variable $u = \dot\varphi\circ\varphi^{-1} \in \Xcal(M)$, given by
\begin{equation}\label{eq:euler_arnold_eq}
	\frac{\dd}{\dd t} m + \LieD_u m + m  \divv u = 0, \quad m^{\sharp} = Au,
\end{equation}
where $A\colon \Xcal(M)\to \Xcal(M)$ is the inertial operator associated with the inner product $\GI_{\id}(\cdot,\cdot)$ on $\Xcal(M)$.
Explicitly,
\begin{equation}\label{eq:inertia_operator_explicit}
	Au = \Delta u + \lambda\sum_{i=1}^{k}\xi_i \int_M\g(u,\xi_i)\vol,
\end{equation}
where $\xi_1,\ldots,\xi_k$ is an basis for the space of harmonic vector fields $\mathfrak{H}(M)$, orthogonal with respect to the $L^2$ inner product~\eqref{eq:L2_inner_xcal}.
Since $\GI$ is a non-degenerate metric, the inertia operator $A$ is invertible.
Let us now compute its inverse.

First, it follows from the Hodge decomposition of 1--forms that the space of vector fields admits the $L^2$-orthogonal decomposition $\Xcal(M) = \mathfrak{E}(M)\oplus\mathfrak{H}(M)$, where $\mathfrak{E}(M)$ is the image of the Laplace--de~Rham operator $\Delta\colon\Xcal(M)\to\Xcal(M)$.
The inertia operator is diagonal with respect to this decomposition: if $u=w + \xi$ are the components then 
\begin{equation}\label{eq:A_diagonal}
	A(w + \xi) = \Delta w + \lambda \xi, \quad \Delta w \in \mathfrak{E}(M), \lambda\xi\in\mathfrak{H}(M).
\end{equation}
Since $\Delta$ is a automorphism on $\mathfrak{E}(M)$, the inverse $\Delta^{-1}\colon \mathfrak{E}(M)\to\mathfrak{E}(M)$ is well-defined, so
\begin{equation}\label{eq:A_inverse}
	A^{-1}(w + \xi) = \Delta^{-1}w + \frac{1}{\lambda}\xi.
\end{equation}
To compute the components $w$ and $\xi$ of $u\in\Xcal(M)$, it suffices to first compute
\begin{equation}\label{eq:xi_harmonic}
	\xi = \sum_{i=1}^{k} \pair{u,\xi_i}_{L^2}\xi_i
\end{equation}
and then set $w = u - \xi$.
We have thus computed the inverse $A^{-1}\colon \Xcal(M)\to\Xcal(M)$.

\subsection{Moser's principal bundle structure} \label{sub:bundle_structure}

Recall from~\autoref{sec:intro} that the diffeomorphism group $\Diff(M)$ acts from the right on the space of densities $\Dens(M)$ via equation~\eqref{eq:pullback_transformations}.
This action is not free: the isotropy subgroup $\Diff_{\mu}(M)$ of $\mu\in\Dens(M)$ (also called stabilizer of~$\mu$) consists those diffeomorphisms that are volume preserving with respect to~$\mu$, given by
\begin{equation}\label{eq:diffmu}
	\Diff_{\mu}(M) \coloneqq \{\varphi\in\Diff(M); \varphi^*\mu=\mu \} .
\end{equation}
The action is, however, transitive, proved by \citet{Mo1965} for $\Diff(M)$ and $\Dens(M)$ and by \citet{EbMa1970} for $\Diff^{s}(M)$ and $\Dens^{s-1}(M)$; for any pair of densities $\nu,\mu$ there exists a diffeomorphism $\ph$ such that $\ph^{*}\nu = \mu$.
If we fix a reference density $\mu \in \Dens(M)$, we can therefore identify $\Dens(M)$ with the quotient space of right co-sets
\begin{equation*}
[\ph] \coloneqq \Diff_{\mu}(M)\circ \ph \in \Diff_{\mu}(M)\backslash \Diff(M).
\end{equation*}
Indeed, if $\psi\in [\ph]$, then $\psi^*\mu = \ph^*\mu$, so the map
\begin{equation*}
	\Diff_{\mu}(M)\backslash \Diff(M) \ni [\ph] \longmapsto \ph^*\mu \in \Dens(M)
\end{equation*}
is well-defined.
It is injective, by construction, and surjective, by Moser's transitivity result.
The projection map
\begin{align*}
	\pi_{\mu}: \Diff(M)\ni \ph \longmapsto \ph^*\mu  \in \Dens(M)
\end{align*}
thereby provides a principal bundle structure
\begin{equation}\label{eq:principal_bundle_diff_dens}
\xymatrix@R3ex@C3ex{
			\Diff_{\mu}(M)\; \ar@{^{(}->}[r] & \Diff(M) \ar[d]^-{\pi_{\mu}} \\ & \Dens(M)\,.
		}
\end{equation}
That is, the preimage $\pi_{\mu}^{-1}(\lambda)$ of each $\lambda\in \Dens(M)$ is a \emph{fibre} in $\Diff(M)$, and each fibre is parameterized by the left action of $\Diff_{\mu}(M)$ on $\Diff(M)$, see \citet[\S\!~III.2.5]{Ha1982} for details.
When $\mu=\vol$, we write $\pi$ instead of $\pi_{\vol}$.

\begin{remark}
In \autoref{sub:fisher_metric} we mapped $\Dens(M)$ to the a subset of $S^{\infty}(M)$ via the $W$-mapping in equation~\eqref{eq:sphere_map}.
The reason for this map was the simple interpretation of the Fisher-Rao metric in this representation (as the sphere metric). 
Through this map, $\Diff(M)$ also acts on $S^{\infty}(M)$.
Indeed, for $f\in S^{\infty}(M)$ and $\varphi\in\Diff(M)$ the action is given by
\begin{equation}\label{half_density_action}
	f\star \varphi \coloneqq \sqrt{\abs{D\varphi}}(f\circ\varphi)\,.  
\end{equation}
As required, $W(\varphi^*\mu) = W(\mu)\star\varphi$.
\end{remark}

\subsection{Riemannian submersions and descending metrics}\label{sub:descending_metric}

In this section we show how that the information metric $\GI$ and the Fisher--Rao metric $\GF$ gives a Riemannian structure to the principal bundle~\eqref{eq:principal_bundle_diff_dens}.
This Riemannian structure is the key to our density matching algorithms in~\autoref{sec:density_matching} and~\autoref{sec:density_matching_non_compatible}.

Moser's result on transitivity implies that $\pi\colon \Diff(M)\to \Dens(M)$ is a submersion: it is smooth and its derivative is surjective at every point.
Following the work in~\cite[\S\!~4]{Mo2014}, we now show that $\pi$ is, in fact, a \emph{Riemannian submersion} with respect to $\GF$ and $\GI$.

Let $\mathcal{V}_{\varphi}\subset T_{\varphi}\Diff(M)$ denote the \emph{vertical distribution} given by the tangent spaces of the fibres of the principal bundle structure~\eqref{eq:principal_bundle_diff_dens}:
\begin{equation}\label{eq:vertical_condition}
	U\in\mathcal{V}_{\varphi} \iff T_{\varphi}\pi\cdot U = 0.
\end{equation}
The \emph{horizontal distribution} $\mathcal{H}_{\varphi}\subset T_{\varphi}\Diff(M)$ with respect to the information metric $\GI$ is given by
\begin{equation}\label{eq:horizontal_condition}
	U\in\mathcal{H}_{\varphi} \iff \GI_{\varphi}(U,V) = 0, \quad \forall\, V \in \mathcal{V}_{\varphi}.
\end{equation}
In other words, $\mathcal{H}_{\varphi}$ is the orthogonal complement of $\mathcal{V}_{\varphi}$.
The metric $\GI_{\varphi}$ \emph{descends} to $\GF$ through the principle bundle structure~\eqref{eq:principal_bundle_diff_dens}.
That is
\begin{equation}\label{eq:descending_metric}
	\GI_{\varphi}(U,V) = \GF_{\pi_{\mu}(\varphi)}(T_{\varphi}\pi_{\mu}\cdot U,T_{\varphi}\pi_{\mu}\cdot V), \quad
	\forall\, U,V\in \mathcal{H}_{\varphi}.
\end{equation}
A remarkable property of descending metrics is that initially horizontal geodesics remain horizontal: if $\varphi(t)$ is a geodesic curve and $\dot{\varphi}(0) \in \mathcal{H}_{\varphi(0)}$, then $\dot{\varphi}(t) \in \mathcal{H}_{\varphi(t)}$ for all~$t$.
Furthermore, if $\varphi(t)\in\Diff(M)$ is a horizontal geodesic curve, then $\mu(t)\coloneqq \pi(\varphi(t))$ is a geodesic curve on $\Dens(M)$.

In summary, we have the following result.

\begin{lemma}[\cite{Mo2014}]\label{lem:H1dot_descending}
	Under the identification $\Dens(M)\simeq \Diff_{\vol}(M)\backslash \Diff(M)$, the information metric $\GI$, given by~\eqref{eq:H1dot}, descends to the Fisher--Rao metric $\GF$, given by~\eqref{eq:fisher_rao_metric}, i.e., $\pi\colon(\Diff(M),\GI)\to (\Dens(M),\GF)$ is a Riemannian submersion.
	The horizontal distribution is right-invariant, given by
	\begin{equation}\label{eq:horisontal_dist_H1dot}
		\mathcal{H}_{\varphi} = \{ U\in T_{\varphi}\Diff(M); U\circ \varphi^{-1} = \grad(f), f\in C^{\infty}(M) \}\, .
	\end{equation}
\end{lemma}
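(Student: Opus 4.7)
My argument would split into three pieces: identify the vertical distribution, characterise $\mathcal H_\id$, and verify the Fisher--Rao matching on horizontal lifts. Differentiating $\pi(\psi_t\circ\varphi)=\varphi^*\psi_t^*\vol$ along a curve $\psi_t$ with $\psi_0=\id$ and $\dot\psi_0=u$ gives
\[
	T_\varphi\pi\cdot U = \varphi^*(\LieD_u\vol) = \varphi^*\bigl(\divv(u)\vol\bigr),\qquad u=U\circ\varphi^{-1},
\]
so $U\in\mathcal V_\varphi$ iff $\divv(u)=0$. Because $\GI$ is right-invariant and the divergence-free condition is preserved under right composition, it suffices to prove $\mathcal H_\id=\grad(C^\infty(M))$; the formula~\eqref{eq:horisontal_dist_H1dot} then follows by right translation.

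For the easy inclusion $\grad C^\infty\subseteq\mathcal H_\id$ I would use the identity $\Delta\grad f=\grad\Delta f$ (which follows from $\Delta$ commuting with $\dd$ on forms) together with integration by parts: for any divergence-free $v$,
\[
	\int_M\g(\Delta\grad f,v)\vol = \int_M\g(\grad\Delta f,v)\vol = -\int_M\Delta f\cdot\divv(v)\vol = 0.
\]
Because every harmonic field $\xi_i$ is divergence-free, the same identity also yields $\int_M\g(\grad f,\xi_i)\vol = 0$, killing the $\lambda$-correction term in~\eqref{eq:H1dot}.

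For the reverse inclusion I would proceed in two stages. First, testing against $v=\xi_j$ exploits $\Delta\xi_j=0$ and self-adjointness of $\Delta$ to reduce $\GI_\id(u,\xi_j)=0$ to $\lambda\int_M\g(u,\xi_j)\vol=0$, so $u$ is $L^2$-orthogonal to $\mathfrak H(M)$. By Hodge decomposition I then write $u=\grad f+w$ with $w$ divergence-free and $L^2$-orthogonal to $\mathfrak H(M)$. Testing $\GI_\id(u,w)=0$, the $\lambda$-term now vanishes and the gradient piece drops out by the previous step, leaving $\int_M\g(\Delta w,w)\vol=0$; the Hodge identity $\int_M\g(-\Delta w,w)\vol = \|\dd w^\flat\|_{L^2}^2 + \|\delta w^\flat\|_{L^2}^2$ then forces $w$ to be harmonic, and hence zero. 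This is the delicate step, because one must extract the vanishing of the harmonic correction \emph{before} choosing $w$ as the test vector, or else the $\lambda$-term still contributes and the argument collapses.

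Finally, for the submersion identity I would compute $T_\id\pi\cdot\grad f = \divv(\grad f)\vol = \Delta f\cdot\vol$ and use two integrations by parts to rewrite $\GI_\id(\grad f,\grad g) = \int_M\g(\grad\Delta f,\grad g)\vol$ as a double integral in $\Delta f$ and $\Delta g$; this matches $\GF_\vol(\Delta f\,\vol,\Delta g\,\vol) = \tfrac14\int_M\Delta f\,\Delta g\,\vol$ once the sign and normalisation conventions in~\eqref{eq:H1dot} are absorbed, and right-invariance of both sides then propagates the identity from $\id$ to arbitrary $\varphi$.
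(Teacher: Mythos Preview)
The paper does not actually prove this lemma; it is quoted from \cite{Mo2014} and stated without argument. So there is no ``paper's own proof'' to compare against, and your proposal supplies what the paper omits.

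Your argument is sound. The three-step structure (vertical identification, Hodge-type characterisation of $\mathcal H_\id$, submersion identity at $\id$ propagated by invariance) is the natural one, and you handle the reverse inclusion correctly: extracting $u\perp_{L^2}\mathfrak H(M)$ \emph{before} testing against the divergence-free remainder $w$ is exactly the right order, and your remark that reversing this order would leave the $\lambda$-term uncontrolled is on point. Two small comments. First, in the final step ``right-invariance of both sides'' is slightly imprecise: what you need on the $\Dens(M)$ side is the $\Diff(M)$-invariance of $\GF$ (i.e.\ $\GF_{\varphi^*\mu}(\varphi^*\alpha,\varphi^*\beta)=\GF_\mu(\alpha,\beta)$), together with the equivariance $\pi(\varphi\circ\psi)=\psi^*\pi(\varphi)$; these combine with right-invariance of $\GI$ to reduce the submersion identity at $\varphi$ to that at $\id$. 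Second, the constant discrepancy you flag is real: with the paper's sign convention $\Delta u=-(\delta\dd+\dd\delta)(u^\flat)^\sharp$ one gets $\GI_\id(\grad f,\grad g)=-\int_M\Delta f\,\Delta g\,\vol$ versus $\GF_\vol(\Delta f\,\vol,\Delta g\,\vol)=\tfrac14\int_M\Delta f\,\Delta g\,\vol$, so $\GI$ descends to $\GF$ only up to a fixed scalar. This is a normalisation inconsistency in the paper's stated formulas rather than a defect in your reasoning, and your phrase ``once the sign and normalisation conventions are absorbed'' is the honest way to record it.
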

\begin{remark}
In \cite{BaBrMi2015} it was shown that the Fisher--Rao metric is the unique metric on $\Dens(M)$ that is invariant under the action of the diffeomorphism group. As a consequence, any right-invariant metric on $\Diff(M)$ that descends to a metric on $\Dens(M)$ through the principal bundle structure~\eqref{eq:principal_bundle_diff_dens} descends to the Fisher--Rao metric.
\end{remark}

\begin{remark}
	The condition for a right-invariant metric to descend to right cosets is transversal to the condition for a subgroup to be totally geodesic. See~\cite{MoPeMaMc2011} for details. 
\end{remark}

\subsection{Base manifold with infinite volume} \label{sub:inf_volume}

In the setting so far we assumed that $M$ is compact and that $\vol(M)$ is finite.
In some applications it is useful to drop this assumption and consider non-compact manifolds with infinite volume, in particular $M=\R^{n}$.
By imposing decay conditions on the set of densities and diffeomorphisms, the previously described theory continues to hold, as now briefly explained.
For a detailed exposition in the case of the diffeomorphism group on $\mathbb R$ see~\cite{BaBrMi2014}.

Let $\vol$ be the volume form of a non-compact Riemannian manifold $M$. 
We introduce compactly supported densities, diffeomorphisms, and functions via
\begin{align*}
	\Dens_{c}(M)& \coloneqq \left\{ I\,\vol\in \Dens(M); \left\{x ; I(x) \neq 1\right\} \text{ has compact closure}\right\}\\
	\Diff_{c}(M)& \coloneqq \left\{\ph\in \Diff(M) ; \left\{\ph ;\ph(x) \neq x\right\} \text{ has compact closure}\right\} \\
	C^{\infty}_{c}(M)& \coloneqq \left\{f\in C^{\infty}(M) ; f\text{ has compact support}\right\}.
\end{align*}
With these decay conditions, the theory described in \autoref{sub:fisher_metric}--\autoref{sub:descending_metric} for the compact case extends to the non-compact, infinite volume case. 
In particular, Moser's Lemma is still valid.

Then the space of compactly supported densities is an open subset of a sphere with infinite radius, thus a flat space.
To see this, we slightly modify the $W$-mapping~\eqref{eq:sphere_map} to
\begin{equation}
\tilde W: \Dens(M)\rightarrow C^{\infty}_c(M),\qquad   \mu \mapsto  \sqrt{\frac{\mu}{\vol}} -1\,.
\end{equation}
Using this mapping, the formula~\eqref{eq:fisher_rao_geodesics} for geodesics on $\Dens_{c}(M)$ simplifies to
\begin{equation}\label{eq:fisher_rao_geodesics_infM}
	[0,1]\ni t\mapsto \tilde W^{-1}\left( 
			(1-t)\tilde W(\mu_0) + t \tilde W(\mu_1)\right)
\end{equation}
and the induced geodesic distance is given by the Hellinger distance.

\section{Matching with compatible background metric}\label{sec:density_matching}
In this section we derive efficient algorithms to solve \autoref{prob:exact} and \autoref{prob:inexact} with respect to the information metric \eqref{eq:H1dot} on $\Diff(M)$ and the Fisher--Rao metric~\eqref{eq:fisher_rao_metric} on $\Dens(M)$ and a background metric $\g$ on $M$ fulfilling the compatibility constraints $\vol_{\g} = \mu_0$.
This property is fulfilled in some applications of density matching, for example texture mapping, random sampling, and mesh adaptivity.

An integral component of our method is the ability to horizontally lift paths in $\Dens(M)$ to the diffeomorphism group. 
Indeed, the selection of $\GI$ on $\Diff(M)$ and $\GF$ on $\Dens(M)$ fulfils two central properties.
(i) Fisher--Rao geodesics on $\Dens(M)$ are explicit~\cite{Fr1991}, and
(ii) the metrics \eqref{eq:H1dot} and \eqref{eq:fisher_rao_metric} are related via a principal bundle structure~\cite{Mo2014}.
It is these properties that allow us to construct fast, explicit algorithms. 

\subsection{Horizontal lifting of paths of densities}\label{sub:lifting}
Given a path of densities $\mu(t)\in\Dens(M)$ we want to find a path of diffeomorphisms $\ph(t)$ that project onto $\mu(t)$ with respect to Moser's principal bundle~\eqref{eq:principal_bundle_diff_dens}, i.e.,
\begin{equation}\label{eq:projection_mu0}
	\pi_{\mu(0)}(\ph(t))= \ph(t)^*\mu(0) = \mu(t).
\end{equation}
Such a path is not unique, since we can compose any solution $\ph$ from the left with any diffeomorphisms $\psi\in \Diff_{\mu(0)}(M)$.
To address the non-uniqueness, we therefore consider paths of diffeomorphisms fulfilling~\eqref{eq:projection_mu0} while of minimal length with respect to the information metric $\GI$, given by~\eqref{eq:H1dot}.
Mathematically, this problem is formulated as follows: 

\begin{problem}[Horizontal lifting] \label{prob:horiz_lifting}
\noindent Given a path of densities $\mu\in C^1([0,1],\Dens(M))$, find a path of diffeomorphisms $\ph \in C^1([0,1],\Diff(M))$ fulfilling
\begin{align}
\ph(0) &= \on{id}\, ,\\
\ph(t)^*\mu(0) &=
\mu(t)\,, \label{eq:projection_condition}
\end{align}	
while minimizing
\begin{equation}\label{equation:density_lifting}
 \int_0^1 \GI_{\ph(t)}(\dot\ph(t),\dot\ph(t))\ud t .
\end{equation}
\end{problem}
In general there is no easy way to solve this problem.
If, however, the background metric fulfils the following compatibility condition, then~\autoref{prob:horiz_lifting} reduces to solving Poisson equations.

\begin{definition}[Compatible background metric]\label{def:compatible_metric}
	Let $\mu\in\Dens(M)$.
	The background metric $\g$ on $M$ is called \emph{compatible with $\mu$} if $\vol_{\g}=\mu$.
\end{definition}

The following result explains the advantage of having a background metric compatible with $\mu(0)$.

\begin{lemma}\label{lemma:density_lifting}
	Let the background metric $\g$ on $M$ be compatible with $\mu(0)$.
	Then there is a unique path of diffeomorphisms solving \autoref{prob:horiz_lifting}.
	This path is horizontal with respect to the information metric~\eqref{eq:H1dot}.
\end{lemma}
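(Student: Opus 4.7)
The plan is to package everything through the Riemannian submersion picture of Lemma~\ref{lem:H1dot_descending}. The compatibility hypothesis $\vol_\g = \mu(0)$ means that $\pi_{\mu(0)}$ coincides with the map $\pi$ from that lemma, so $\pi_{\mu(0)}\colon (\Diff(M),\GI)\to(\Dens(M),\GF)$ is a Riemannian submersion and the horizontal distribution is the right-invariant one,
\[
\mathcal{H}_\varphi = \{\,U\in T_\varphi\Diff(M)\ :\ U\circ\varphi^{-1}=\grad f,\ f\in C^\infty(M)\,\}.
\]

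\textbf{Minimality from the submersion property.} For any lift $\varphi(t)$ of $\mu(t)$, decompose $\dot\varphi(t)=\dot\varphi_h(t)+\dot\varphi_v(t)$ orthogonally with respect to $\GI$. Since $\dot\varphi_v$ is vertical, $T_{\varphi(t)}\pi_{\mu(0)}\cdot\dot\varphi_h(t)=\dot\mu(t)$, and by the descending property \eqref{eq:descending_metric},
\[
\GI_{\varphi(t)}(\dot\varphi,\dot\varphi)=\GI_{\varphi(t)}(\dot\varphi_h,\dot\varphi_h)+\GI_{\varphi(t)}(\dot\varphi_v,\dot\varphi_v)\ge \GF_{\mu(t)}(\dot\mu(t),\dot\mu(t)),
\]
with equality if and only if $\dot\varphi_v\equiv 0$, i.e.\ $\varphi$ is horizontal. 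Thus minimizers of \eqref{equation:density_lifting} subject to \eqref{eq:projection_condition} are exactly horizontal lifts of $\mu(t)$.

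\textbf{Explicit construction of the horizontal lift.} Write $u(t)=\dot\varphi(t)\circ\varphi(t)^{-1}$. Differentiating the constraint $\varphi(t)^*\mu(0)=\mu(t)$ yields $\varphi(t)^*(\LieD_{u(t)}\mu(0))=\dot\mu(t)$, i.e.
\[
(\divv_{\mu(0)}u(t))\circ\varphi(t)\ =\ \frac{\dot\mu(t)}{\mu(t)}.
\]
Horizontality forces $u(t)=\grad f(t)$, and compatibility gives $\divv_{\mu(0)}=\divv$, so $\divv u(t)=\Delta f(t)$. Hence
\[
\Delta f(t)\ =\ \Big(\tfrac{\dot\mu(t)}{\mu(t)}\Big)\circ\varphi(t)^{-1}.
\]
The right-hand side integrates to zero against $\vol=\mu(0)$, as a change of variables by $\varphi(t)$ turns it into $\int_M\dot\mu(t)=\frac{d}{dt}\int_M\mu(t)=0$, so the Poisson equation is solvable and $\grad f(t)$ is uniquely determined (the kernel of $\Delta$ consists of constants, which are annihilated by $\grad$). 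Feeding $u(t)=\grad f(t)$ back into $\dot\varphi=u\circ\varphi$ with $\varphi(0)=\id$ gives a flow ODE whose right-hand side depends smoothly on $(t,\varphi)$ through the solution of the Poisson problem; existence and uniqueness of $\varphi(t)$ on $[0,1]$ follow from the standard ODE theory on $\Diff^s(M)$ and then transfer to $\Diff(M)$ as in~\cite{EbMa1970}.

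\textbf{Uniqueness and conclusion.} Since any minimizer must be horizontal, and the horizontal lift with $\varphi(0)=\id$ is uniquely characterized by the Poisson equation above together with the flow ODE, the solution to Problem~\ref{prob:horiz_lifting} is unique and horizontal. The main technical point is the solvability of the Poisson equation for $f(t)$ at each time, which is where the compatibility hypothesis does its work: without $\vol_\g=\mu(0)$ one would have to replace $\Delta$ by the $\mu(0)$-weighted Laplacian $\divv_{\mu(0)}\grad$, losing the clean connection to the inertia operator and thus the fast algorithm that follows.
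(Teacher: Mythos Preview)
Your proof is correct and follows essentially the same strategy as the paper: differentiate the projection constraint, use the orthogonal (Hodge--Helmholtz) splitting of the velocity into gradient and divergence-free parts, and observe that the length is minimized precisely when the divergence-free (vertical) part vanishes. The only notable differences are organizational: you package the argument via the Riemannian submersion of Lemma~\ref{lem:H1dot_descending} and include the explicit Poisson construction (which the paper defers to the subsequent Theorem~\ref{thm:lifting}), and you obtain uniqueness directly from solvability of the Poisson problem plus ODE theory, whereas the paper appeals to the information factorization result of~\cite[\S 5]{Mo2014}. Your route is slightly more self-contained; the paper's citation buys global-in-$t$ existence on $[0,1]$ without having to discuss it.
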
 

\begin{proof}
To prove this statement we differentiate the equation~\eqref{eq:projection_condition} for $\mu(t)$:
\begin{equation}\label{eq:dotmu}
\begin{aligned}
	\dot\mu(t) &= \partial_t(\ph(t)^*\mu(0)) = \varphi(t)^* \LieD_{v(t)}\mu(0).
\end{aligned} 
\end{equation}
Here $v(t)\in \X(M)$ denotes the right trivialized derivative of $\ph$, i.e., $\dot\ph\circ\ph\i=v$.  
Using the compatibility condition, we get
\begin{equation}\label{eq:mudot_flow}
	\dot\mu(t) = \varphi(t)^*\LieD_{v(t)}\vol_{\g} = \divv(v(t))\circ\varphi(t)\, \mu(t) .
\end{equation}
By the Hodge-Helmholz decomposition for vector fields, we can write $v$ as
\begin{equation}
 v=\grad f+w\,,
\end{equation}
for some function $f$ and a divergence free part $w$.
Equation \eqref{eq:dotmu} only determines the divergence part of the vector field $v$, which allows us to choose $w$ freely.  
Since the Hodge-Helmholz decomposition is orthogonal with respect to the information metric $\GI$, the length of $\ph$ is minimal for $w=0$. 
This is the horizontality condition, described in \autoref{sub:descending_metric}.
That the solution is unique follows from uniqueness of the information factorization of diffeomorphisms, see~\cite[\S\!~5]{Mo2014}.
\end{proof}

From \autoref{lemma:density_lifting} we obtain an equation for the solution of the lifting problem.
\begin{theorem}\label{thm:lifting}
Under the same conditions as in \autoref{lemma:density_lifting},
the unique solution of \autoref{prob:horiz_lifting} is obtained by solving the Poisson equation
\begin{equation}\label{eq:veq}
	\begin{split}
		\Delta f(t) &= \frac{\dot\mu(t)}{\mu(t)}\circ \ph(t)^{-1}, \\
		v(t) &= \grad(f(t)), \\
		\dot\ph(t) &= v(t)\circ\ph(t), \quad \ph(0) = \id \, .
	\end{split}
\end{equation}
\end{theorem}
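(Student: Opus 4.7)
The plan is to derive the Poisson system in~\eqref{eq:veq} by unpacking what \autoref{lemma:density_lifting} already gives us, combined with the explicit description of the horizontal distribution in~\autoref{lem:H1dot_descending}. By \autoref{lemma:density_lifting}, the optimizer $\varphi(t)$ is horizontal with respect to $\GI$. By \autoref{lem:H1dot_descending}, horizontality is equivalent to the right-trivialized velocity $v(t) = \dot\varphi(t)\circ\varphi(t)^{-1}$ being a gradient, so $v(t) = \grad f(t)$ for some $f(t)\in C^\infty(M)$. This immediately accounts for the second and third equations in~\eqref{eq:veq}, the latter being merely the definition of $v$.

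It remains to identify $f(t)$. Here I would reuse equation~\eqref{eq:mudot_flow} from the proof of \autoref{lemma:density_lifting}, which under the compatibility condition $\vol_\g=\mu(0)$ reads $\dot\mu(t) = \divv(v(t))\circ\varphi(t)\,\mu(t)$. Dividing by $\mu(t)$ and pre-composing with $\varphi(t)^{-1}$ gives $\divv(v(t)) = (\dot\mu(t)/\mu(t))\circ\varphi(t)^{-1}$. Since $v(t)=\grad f(t)$ and $\Delta=\divv\grad$ on functions, this is precisely the Poisson equation $\Delta f(t) = (\dot\mu(t)/\mu(t))\circ\varphi(t)^{-1}$.

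The step that requires the most care is checking that this Poisson equation is actually solvable, i.e., that its right-hand side has zero mean against $\vol$. I would verify this by a change of variables: using $\varphi(t)^*\vol = \varphi(t)^*\mu(0) = \mu(t)$ (again the compatibility condition), one has
\begin{equation*}
\int_M \left(\frac{\dot\mu(t)}{\mu(t)}\right)\circ\varphi(t)^{-1}\,\vol = \int_M \frac{\dot\mu(t)}{\mu(t)}\,\varphi(t)^*\vol = \int_M \frac{\dot\mu(t)}{\mu(t)}\,\mu(t) = \int_M \dot\mu(t) = 0,
\end{equation*}
where the last equality uses that $\mu(t)\in\Dens(M)$ has constant total mass $\vol(M)$. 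Hence $f(t)$ exists and is unique up to an additive constant; this constant is irrelevant because only $\grad f(t)$ enters the system. Once $v(t)$ is determined, standard ODE theory on $\Diff(M)$ (or $\Diff^s(M)$, cf.~\cite{EbMa1970}) gives a unique flow $\varphi(t)$ with $\varphi(0)=\id$, and by construction this flow satisfies~\eqref{eq:projection_condition} and the horizontality requirement. Uniqueness of the lift has already been established in \autoref{lemma:density_lifting}, so no separate argument is needed here.
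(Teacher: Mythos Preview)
Your proof is correct and follows essentially the same route as the paper: invoke horizontality from \autoref{lemma:density_lifting}, use \autoref{lem:H1dot_descending} to write $v=\grad f$, and read off the Poisson equation from~\eqref{eq:mudot_flow}. You additionally verify the zero-mean compatibility condition for the Poisson right-hand side and remark on well-posedness of the flow ODE, both of which the paper leaves implicit; these are welcome clarifications but do not change the argument.
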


\begin{proof}
The horizontal bundle is generated by gradient vector fields $v = \grad f$ for some function $f$.
Thus,
\begin{equation}
	\divv(v(t))=\divv(\grad(f))=\Delta f\,.
\end{equation}
From equation~\eqref{eq:mudot_flow} we then obtain~\eqref{eq:veq}.

\end{proof}
\begin{remark}
We can use the equivariance of the Laplacian and the gradient to rewrite the differential equation \eqref{eq:veq} as
\begin{equation}
	\begin{split}
		\Delta_{\varphi(t)^*g}(f(t)\circ\varphi(t)) &= \frac{\dot\mu(t)}{\mu(t)}, \\
		\varphi(t)^*v(t) &= \grad_{\varphi^*g}(f(t)\circ\varphi(t)), \\
		\dot\varphi(t) &= v(t)\circ\varphi(t), \quad \varphi(0) = \id\,. \\
	\end{split}
\end{equation}
If we introduce $h(t)\coloneqq f(t)\circ\varphi(t), w(t)\coloneqq \ph(t)^*v(t)$ and $g(t)\coloneqq\varphi(t)^*g$ the above equations become
\begin{equation}
	\begin{split}
		\Delta_{g(t)}(h(t)) &= \frac{\dot\mu(t)}{\mu(t)}, \\
		w(t)) &=\grad_{g(t)}(h(t)), \\
		\dot\varphi(t) &= T_{\id}\ph^{-1}\cdot w(t),
		\quad \varphi(0) = \id \, . \\
	\end{split}
\end{equation}
The main difference to equation \eqref{eq:veq} is the time-dependence of the Laplacian in Poisson's equation. 
\end{remark}

A numerical algorithm for solving~\autoref{prob:horiz_lifting} is now given as follows.

\begin{algorithm}\label{alg:lifting}
Assume we have a numerical way to represent functions, vector fields, and diffeomorphisms on~$M$, and numerical methods for
(i) composing functions and vector fields with diffeomorphisms,
(ii) computing the gradient of functions, and
(iii) computing the inverse of the  Laplace operator~$\Delta$.
Given a $C^1$-path of densities $[0,1]\ni t \mapsto \mu(t)$ with $\mu(0) = \vol$, a numerical algorithm for computing discrete lifted paths $\{\varphi_k\}_{k=0}^{N}$ and $\{\varphi^{-1}_{k}\}_{k=0}^{N}$ is given as follows:
\begin{enumerate}
	\item Chose a step-size $\varepsilon = 1/N$ for some positive integer~$N$.
	Initialize $t_0=0$, $\varphi_0 = \id$, and $\varphi^{-1}_{0} = \id$.
	Set $k\leftarrow 0$.

	\item Compute $I_k = \frac{\dot \mu(t_k)}{\mu(t_k)}\circ\varphi^{-1}_{k}$ and solve the Poisson equation
	\begin{equation}
		\Delta f_k= I_k.
	\end{equation}

	\item Compute the gradient vector field $v_k = \grad f_k$.

	\item Construct approximations $\psi_k$ to $\exp(\varepsilon v_k)$ and $\psi^{-1}_k$ to $\exp(-\varepsilon v_k)$, for example 
	\begin{equation}
		\psi_k = \id + \varepsilon v_k, \quad \psi^{-1}_k = \id - \varepsilon v_k.		
	\end{equation}

	\item Update the diffeomorphisms
	\begin{equation}
		\varphi_{k+1} = \psi_k\circ\varphi_k, \quad \varphi^{-1}_{k+1} = \varphi^{-1}_k\circ\psi^{-1}_k .
	\end{equation}
	
	\item Set $k \leftarrow k+1$ and continue from step 2 unless $k=N$.
\end{enumerate}
\end{algorithm}

\subsection{Exact compatible density matching (optimal information transport)}

The special case of \autoref{prob:exact} with $\GI$ and $\GF$ for infinite-dimensional metrics and a compatible background metric gives OIT.

\begin{problem}[Optimal information transport] \label{prob:oit}
	Given $\mu_1\in \Dens(M)$, find $\varphi\in\Diff(M)$ minimizing $\distI(\id,\varphi)$ under the constraint 
	\begin{subequations}		
	\begin{equation}\label{eq:constraint_oit}
		\mu_1 = \varphi_{*}\vol.
	\end{equation}
	Equivalently, using the density function $I_1$ for $\mu_1$, the constraint is
	\begin{equation}\label{eq:constraint_exact_intensity_oit}
		I_1 = \abs{D\varphi^{-1}}.
	\end{equation}
	\end{subequations}
\end{problem}

To better conform to the horizontal lifting setup in~\autoref{sub:lifting}, which uses the pullback rather than pushforward action, we notice that if $\varphi$ is a solution to \autoref{prob:oit}, then $\varphi^{-1}$ is a solution to the same problem but with pullback in~\eqref{eq:constraint_oit} instead of pushforward.
This follows from right-invariance of $\GI$, as $\distI(\id,\varphi) = \distI(\id,\varphi^{-1})$.

The following result is a direct consequence of the information factorization of diffeomorphisms~\cite[Theorem~5.6]{Mo2014}.

\begin{theorem}\label{thm:solution_exact}
	\autoref{prob:oit} has a unique solution.
	Its inverse is given by the end-point of the solution to the lifting equations~\eqref{eq:veq} for the path $\mu(t)$ given by the Fisher--Rao geodesic~\eqref{eq:fisher_rao_geodesics} with $\mu_0=\vol$.
\end{theorem}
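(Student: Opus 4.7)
The plan is to exploit the Riemannian submersion structure from \autoref{lem:H1dot_descending} to reduce the variational problem on $\Diff(M)$ to the explicit Fisher--Rao geodesic on $\Dens(M)$. First, right-invariance~\eqref{eq:right_invariance} gives $\distI(\id,\varphi) = \distI(\id,\varphi^{-1})$, while the pushforward constraint $\varphi_*\vol = \mu_1$ is equivalent to $(\varphi^{-1})^*\vol = \mu_1$, i.e., $\pi(\varphi^{-1}) = \mu_1$ with $\pi = \pi_\vol$. Writing $\psi = \varphi^{-1}$ recasts \autoref{prob:oit} as: minimize $\distI(\id,\psi)$ subject to $\pi(\psi) = \mu_1$.

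Next, because $\pi\colon (\Diff(M),\GI) \to (\Dens(M),\GF)$ is a Riemannian submersion, the projection is nonexpanding, so every $\psi$ with $\pi(\psi)=\mu_1$ satisfies
\begin{equation*}
\distI(\id,\psi) \geq \distF(\vol,\mu_1).
\end{equation*}
To show the lower bound is attained, take $\mu(t)$ to be the explicit Fisher--Rao geodesic~\eqref{eq:fisher_rao_geodesics} from $\vol$ to $\mu_1$, and let $\varphi(t)$ be its horizontal lift from \autoref{thm:lifting}; the compatibility requirement $\vol_\g = \mu(0) = \vol$ in \autoref{lemma:density_lifting} holds trivially. Since horizontal vectors have the same $\GI$-norm as their $\GF$-images under $T\pi$, the length of $\varphi$ equals the length of $\mu$, which is $\distF(\vol,\mu_1)$. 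Combined with the lower bound, this forces $\distI(\id,\varphi(1)) = \distF(\vol,\mu_1)$, so $\varphi(1)$ is a minimizer, and $\varphi(1)^{-1}$ solves the original \autoref{prob:oit}.

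The remaining task, and the main obstacle, is uniqueness. Any minimizer $\psi$ must saturate the submersion inequality, forcing a length-minimizing geodesic from $\id$ to $\psi$ to be horizontal. Its projection must then coincide with the Fisher--Rao geodesic $\mu(t)$, and by~\eqref{eq:horisontal_dist_H1dot} its initial velocity is a gradient vector field. Uniqueness of $\psi$ therefore reduces to uniqueness of the horizontal lift of $\mu(t)$ starting at $\id$. This is precisely the content of the information factorization of diffeomorphisms~\cite[Theorem~5.6]{Mo2014}: every $\psi \in \Diff(M)$ factors uniquely as $\psi = \eta\circ\zeta$ with $\eta\in\Diff_\vol(M)$ and $\zeta$ arising from a gradient (horizontal) lift starting at $\id$. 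Invoking this factorization theorem---which is the nontrivial input, resting on the solvability of the Poisson equation in~\eqref{eq:veq} and the resulting bijectivity of the horizontal lift operator---completes the uniqueness half and hence the proof.
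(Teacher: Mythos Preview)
Your proposal is correct and aligns with the paper's approach: the paper does not give a separate proof but simply states that the result is a direct consequence of the information factorization of diffeomorphisms~\cite[Theorem~5.6]{Mo2014}, which is exactly the nontrivial input you invoke for uniqueness. Your existence argument via the Riemannian submersion lower bound and horizontal lifting is a welcome elaboration of what the cited factorization theorem encodes, so the two treatments differ only in level of detail, not in substance.
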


It follows that a numerical algorithm for \autoref{prob:oit} is given by \autoref{alg:lifting} with $\mu(t)$ as in \autoref{thm:solution_exact}.
This algorithm is demonstrated in~\autoref{sec:density_matching_examples} below.
Before that, we solve the lifting equations explicitly in the one-dimensional case.

\begin{example}\label{ex:explicit}

We want to explicitly solve the \autoref{prob:oit} in dimension one. 
Let $\mu_0=I_0\ud x$, $\mu_1= I_1 \ud x$ be two arbitrary densities on~$M=S^1\simeq \R/\Z$. 
Since $M$ is one-dimensional we could solve this problem directly, using that, up to translations, the diffeomorphism $\ph$ is determined by the matching constraint only.
We shall, however, refrain from using this fact and instead solve the lifting equations~\eqref{eq:veq}.

The standard metric on $S^1$ is not compatible with $\mu_0$ unless $f_0 \equiv 1$.
Nevertheless, 
it is straightforward to construct a compatible background metric: choose $\g = I_0^2\ud x\otimes \dd x$.
Then $\vol_{\g} = \mu_0$ as required.
In contrast to the higher-dimensional case, this choice of a compatible background metric is uniquely determined by~$I_0$;
two different ways to construct compatible metrics in the higher-dimensional case are described in \autoref{sec:choice_of_background_metric}. 

Using \autoref{thm:solution_exact} we are now able to obtain the solution.
To simplify the notation, let $f_0= \sqrt{I_0}$ and $f_1=\sqrt{I_1}$, corresponding to the $W$--map $W(\mu) = \sqrt{\mu/\dd x}$.
First we recall the formula~\eqref{eq:fisher_rao_geodesics} for the Fisher--Rao geodesics on $\Dens(S^1)$, i.e.,
\begin{equation}
\mu(t) =  \left( 
			\frac{\sin\left((1-t)\theta\right)}{\sin\theta}f_0 + \frac{\sin\left(t\theta\right)}{\sin\theta}f_1
		\right)^2 \dd x \,,
\end{equation}
where the angle $\theta$ is given by
\begin{equation}
\theta = \arccos\left( \frac{1}{2\pi} \int_{0}^{2\pi} f_0 f_1 \ud x \right) \,.
\end{equation}
To calculate the lifting equations we need a formula for the gradient and Laplacian of the metric $\g$. 
For any function $f$ we have
 \begin{equation}
\grad_{\g}(f)= \frac1{I_0} \partial_x f, \qquad \Delta_g f=\frac1{f^2_0} \partial_x (f_0 \partial_x f)
\end{equation}
These formulas are derived in a more general setting in \autoref{conformalmetric}.
To simplify the notation we let
\begin{equation}
h(t,x)=\frac{\dot\mu(t)}{\mu(t)}=\frac{2 \theta  \left(\cos(\theta -t \theta) f_0(x)-\cos(t \theta) f_1(x)\right)}{ f_1(x) \sin(t \theta)+\sin(\theta -t \theta) f_0(x)}\,.
\end{equation}
The lifting equations~\eqref{eq:veq} now becomes
	\begin{equation}
		\begin{split}
			\ph_t\circ\ph^{-1}  &= \frac1{f_0}  f_x, \quad \ph(0) = \on{id}, \\
			\frac1{f_0^2} \partial_x (f_0 f_x) &= h\circ \ph^{-1}\,.
		\end{split}
	\end{equation}
The solution to this PDE is given by
\begin{equation}
\ph(t,x)=\psi(t,\psi(0,x)^{-1})\quad\text{with}\quad \psi(t,x)=\int_0^x \frac{\mu(t)}{\dd x} \dd y\,.
\end{equation}
Evaluating at $t=1$ we obtain
\begin{equation}
\ph(1,x)= \left(\int_0^x f_1 \ud y\right)\circ \left(\int_0^x f_0 \ud y\right )^{-1}\,.
\end{equation}
\end{example}

\subsection{Inexact compatible density matching}

We are now interested in the special case of \autoref{prob:inexact} with $\GI$ and $\GF$ for infinite-dimensional metrics and a compatible background metric.

\begin{problem}[Inexact, compatible density matching] \label{prob:oit_inexact}
	Given $\mu_1\in \Dens(M)$, find $\varphi\in\Diff(M)$ minimizing 
	\begin{equation}\label{eq:oit_inexact}
		E(\varphi) = \sigma \distI^2(\id,\varphi) + \distF^2(\varphi_*\vol, \mu_1).
	\end{equation}
	where $\sigma > 0$ is a fixed parameter.
\end{problem}

As in \autoref{prob:oit}, $\varphi$ is a minimizer of $E(\varphi)$ if and only if $\phi = \varphi^{-1}$ is a minimizer of
\begin{equation}\label{eq:oit_inexact_pullback}
	\tilde E(\phi) = \sigma \distI^2(\id,\phi) + \distF^2(\phi^*\vol, \mu_1).
\end{equation}
Our approach for \autoref{prob:oit_inexact} is to minimize~\eqref{eq:oit_inexact_pullback} and then obtain the solution by taking the inverse.
From \autoref{lem:H1dot_descending}, i.e., that $\GI$ descends to $\GF$, it follows that the lifting equations can be used also to obtain the solution to~\autoref{prob:oit_inexact}.

\begin{theorem}\label{thm:inexact_compatible}
	\autoref{prob:oit_inexact} has a unique solution, obtained as follows.
	Let $\ph(t)$ be the horizontally lifted geodesic such that $\ph(1)^{-1}$ solves \autoref{prob:oit} for the same~$\mu_1$.
	Let $s = \frac{1}{1+\sigma}$.
	Then the solution is given by $\ph(s)^{-1}$.
\end{theorem}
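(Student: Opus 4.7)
The plan is to reduce the infinite-dimensional optimization to a one-variable scalar minimization via the Riemannian submersion $\pi\colon (\Diff(M),\GI)\to(\Dens(M),\GF)$, then use Cauchy--Schwarz together with the triangle inequality on $(\Dens(M),\distF)$.

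First, as remarked just after the statement of \autoref{prob:oit_inexact}, right-invariance of $\GI$ gives $\distI(\id,\varphi)=\distI(\id,\varphi^{-1})$, so it suffices to minimize $\tilde E(\phi)=\sigma\distI^2(\id,\phi)+\distF^2(\phi^*\vol,\mu_1)$ and then invert. Since $\pi\colon(\Diff(M),\GI)\to(\Dens(M),\GF)$ is a Riemannian submersion (\autoref{lem:H1dot_descending}), horizontal components of tangent vectors have the same norm as their projections while vertical components contribute only extra length, so for every $\phi\in\Diff(M)$
\begin{equation*}
\distI(\id,\phi)\;\ge\;\distF(\vol,\phi^*\vol),
\end{equation*}
with equality exactly when $\phi$ is the endpoint of a horizontal geodesic from $\id$.

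Set $\nu\coloneqq\phi^*\vol$, $L\coloneqq\distF(\vol,\mu_1)$, $a\coloneqq\distF(\vol,\nu)$, $b\coloneqq\distF(\nu,\mu_1)$. Then $\tilde E(\phi)\ge\sigma a^2+b^2$, and Cauchy--Schwarz applied to $(\sqrt\sigma\,a,b)$ and $(1/\sqrt\sigma,1)$ yields
\begin{equation*}
\sigma a^2+b^2\;\ge\;\frac{\sigma(a+b)^2}{1+\sigma}\;\ge\;\frac{\sigma L^2}{1+\sigma},
\end{equation*}
where the last step uses the triangle inequality $a+b\ge L$ for the Fisher--Rao distance. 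Equality in Cauchy--Schwarz forces $b=\sigma a$, and equality in the triangle inequality forces $\nu$ to lie on the minimizing Fisher--Rao geodesic from $\vol$ to $\mu_1$; the two conditions together pin down $\nu=\mu(s)$ at the unique parameter $s=\tfrac{1}{1+\sigma}$, where $\mu(t)$ is the constant-speed geodesic of \eqref{eq:fisher_rao_geodesics} with $\mu_0=\vol$.

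It remains to produce $\phi$ realizing this minimum. Let $\ph(t)$ be the horizontal lift of $\mu(t)$ furnished by \autoref{thm:lifting} (the compatibility $\vol_\g=\vol=\mu(0)$ is exactly what \autoref{lemma:density_lifting} requires). By construction $\ph(s)^*\vol=\mu(s)$, and because $\ph$ is a horizontal geodesic, $\distI(\id,\ph(s))=\distF(\vol,\mu(s))=sL$, matching both equality cases above. Hence $\tilde E(\ph(s))=\sigma L^2/(1+\sigma)$ attains the lower bound, so $\ph(s)$ minimizes $\tilde E$ and $\varphi=\ph(s)^{-1}$ minimizes $E$. Uniqueness is the only delicate point: $\nu$ is forced uniquely by the scalar minimization, and uniqueness of the horizontal lift realizing $\nu$ follows from the information factorization of diffeomorphisms \cite[Theorem 5.6]{Mo2014}. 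The main subtlety in the argument is ensuring we stay inside the injectivity radius of the Fisher--Rao sphere so that $\mu(t)$ is genuinely distance-realizing; this is automatic as long as $\theta<\pi$ in \eqref{eq:fisher_rao_geodesics}, i.e.\ whenever $\vol$ and $\mu_1$ are not antipodal on $S^\infty(M)$.
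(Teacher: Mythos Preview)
Your argument is correct and follows essentially the same route as the paper: reduce via the Riemannian submersion to minimizing $\tilde e(\mu)=\sigma\distF^2(\vol,\mu)+\distF^2(\mu,\mu_1)$ on $\Dens(M)$, then locate the minimizer on the Fisher--Rao geodesic between $\vol$ and $\mu_1$. The one minor difference is that you obtain the value $s=1/(1+\sigma)$ via a Cauchy--Schwarz/triangle-inequality lower bound with explicit equality conditions, whereas the paper appeals to convexity of $\tilde e$ and then minimizes the one-variable function $\sigma s^2+(1-s)^2$ directly.
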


\begin{proof}
	A minimizer $\phi$ of~\eqref{eq:oit_inexact_pullback} must be connected to the identity by a horizontal geodesic (otherwise it would be possible to find another diffeomorphism with a strictly smaller value of $\tilde E$, using \cite[Theorem~5.6]{Mo2014}).
	Therefore, minimizing $\tilde E$ is equivalent to minimizing the functional $\tilde e(\mu) \coloneqq \sigma \distF^{2}(\vol,\mu) + \distF^{2}(\mu,\mu_1)$ on $\Dens(M)$.
	First, notice that $\tilde e$ is convex on $\Dens(M)$, so a unique minimizer exists.
	Denote it~$\nu$.
	From the spherical geometry of $(\Dens(M),\GF)$, explained in~\autoref{sub:information_metric}, it is cleat that $\nu$ must belong to the geodesic curve $\mu(t)$ between $\vol$ and $\mu_1$.
	Without loss of generality, we may assume that the distance between $\vol$ and $\mu_1$ is~$1$.
	Using arclength parametrization $\mu(s)$, we then get $\tilde e(\mu(s)) = \sigma s^2 + (1-s)^2$. 
	Minimization over~$s$ now proves the result.
\end{proof}

From \autoref{thm:inexact_compatible} it follows that a numerical algorithm for \autoref{prob:oit_inexact} is obtained through \autoref{alg:lifting} by solving the lifting equations until reaching $t = 1/(\sigma + 1)$ and then take the inverse.

\section{Examples of matching with compatible metric}\label{sec:density_matching_examples}

In this section give some examples of matching with a compatible metric, using \autoref{alg:lifting}.

\subsection{Random sampling from non-uniform probability distributions}

In $\R$, a classical algorithm for generating random samples from an arbitrary probability density function is to use the result in \autoref{ex:explicit}.
That is, one uses the cumulative distribution function to transform the standard uniform random variable on the unit interval. 
\autoref{alg:lifting} can analogously be used to transform uniform random samples to samples from an arbitrary probability density on~$M$. 

As an example, let $M=\mathbb{T}^2$ (the two-dimensional flat torus).
We want to produce random samples from an arbitrary probability distribution on $\mathbb{T}^2$, for example
\begin{equation}\label{eq:non_uniform_pdf}
	\mu_1 = \big(1 - 0.8 \cos (x)\cos(2 y)\big) \underbrace{\dd x\wedge\dd y}_{\vol}
\end{equation}
where $x,y\in [-\pi,\pi)$ are coordinates.
The approach is to first use the lifting algorithm for \autoref{prob:oit} to compute $\varphi\in\Diff(\mathbb{T}^{2})$ such that $\varphi_*\vol = \mu_1$, then draw random samples $(x_i,y_i)$ from the uniform distribution (using a uniform random number generator), then map these samples into the $\mu_1$-distribution by $(\tilde x_i,\tilde y_i) = \varphi(x_i,y_i)$.

For a $1024\times 1024$ grid on $\mathbb{T}^{2}$, with step-size $\varepsilon = 0.05$ (a total of $20$~steps), we obtain the following warp $\varphi$ and Jacobian $\abs{D\varphi}$.  

\begin{center}
\begin{tabular}{@{}cc@{}}
	\small Warp & \small{Jacobian $\abs{D\varphi}$} \\
	\includegraphics[height=.4\textwidth]{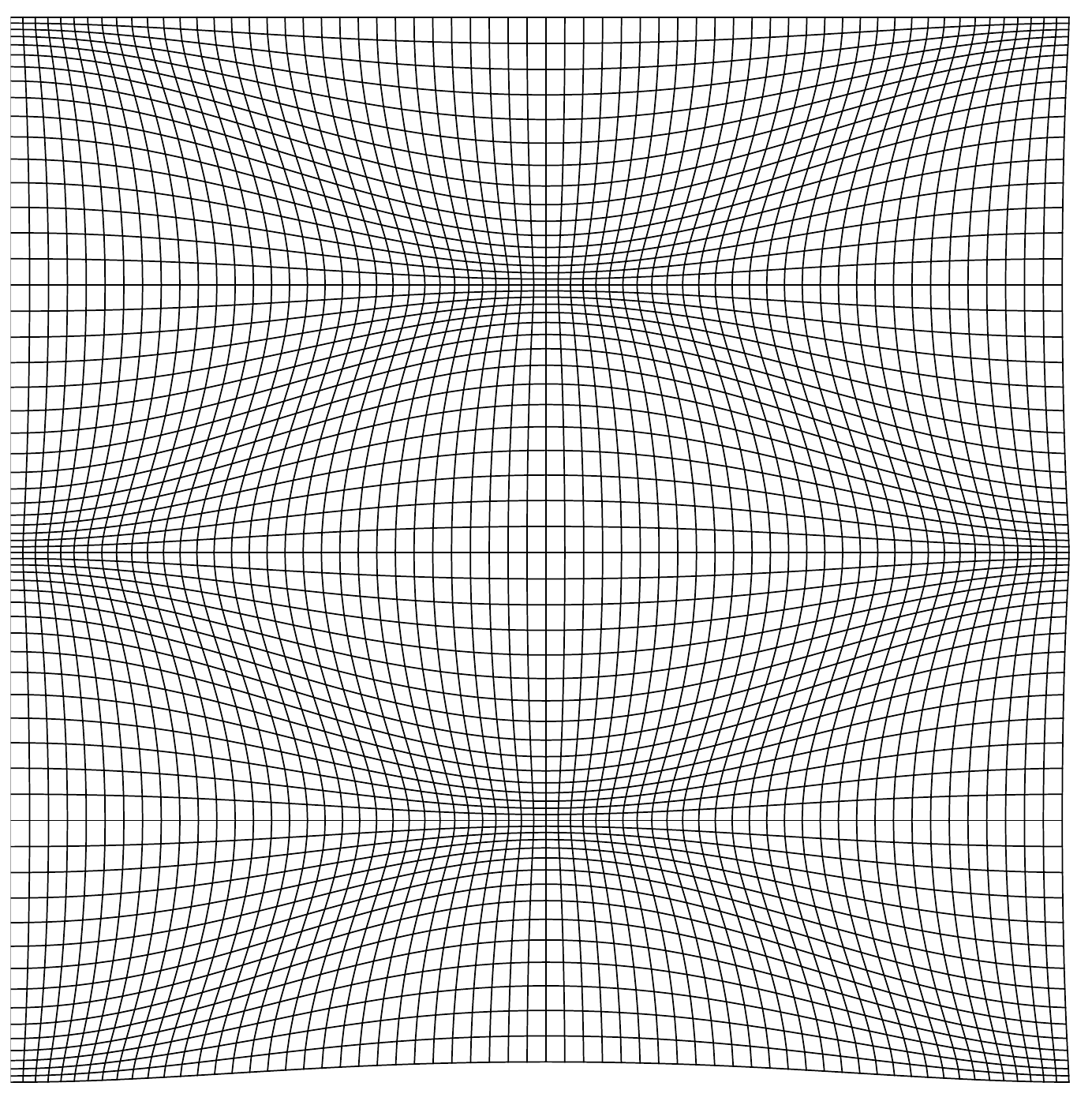}\,
	&	
	\includegraphics[height=.4\textwidth]{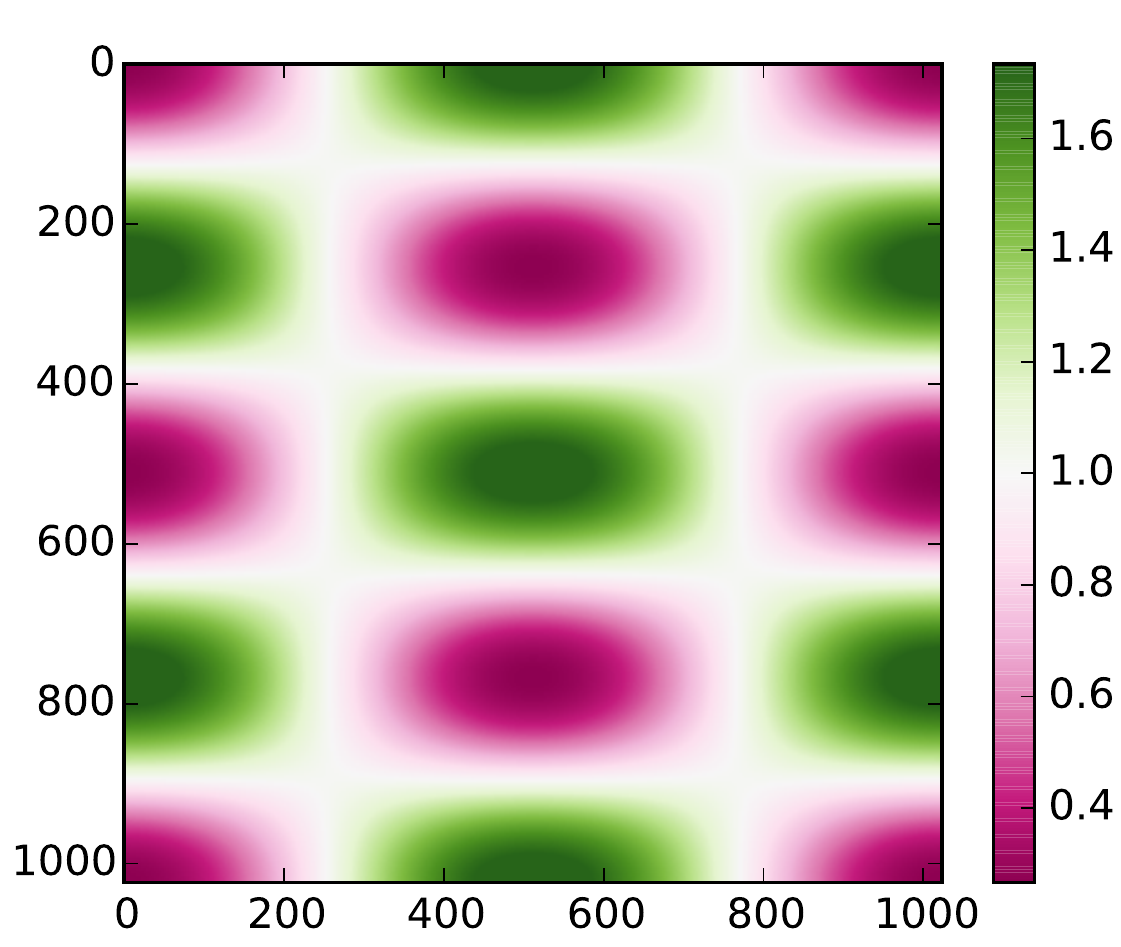}
\end{tabular}
\end{center}
\noindent
Green and pink shades of the Jacobian implies, respectively, expansion and contraction.
The optimal information framework assures the warp is matching the two probability densities while locally minimizing metric distortion.

We now draw $10^5$ uniform samples and transform them with the computed~$\varphi$.
\begin{center}
\begin{tabular}{@{}cc@{}}
	\small Uniform samples & \small{Non-uniform samples} \\
	\includegraphics[height=.42\textwidth]{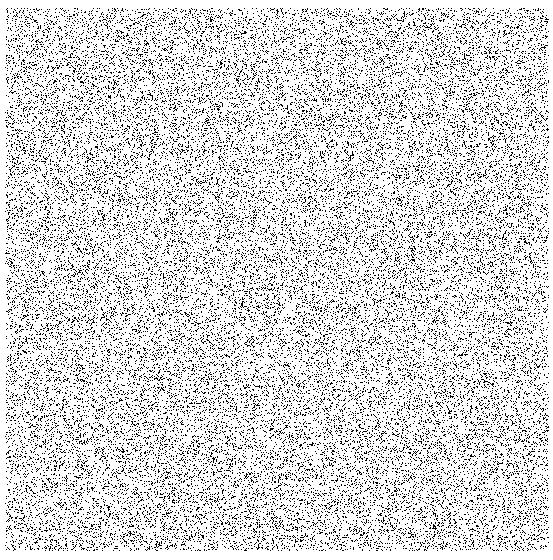}\,
	&	
	\includegraphics[height=.42\textwidth]{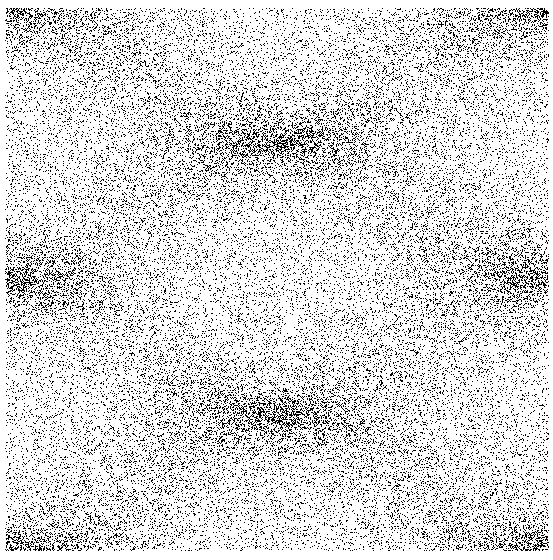}
\end{tabular}
\end{center}
\noindent
A benefit of transport-based methods over traditional Markov Chain Monte Carlo methods is cheap computation of additional samples;
it amount to drawing uniform samples and then evaluating the transformation.
A downside is poor scaling with increasing dimensions.

Detailed comparisons of the OIT approach, developed here, to the OMT approach, developed by~\citet{MoMa2012} and \citet{Re2013}, is beyond the scope of this paper.  Since OIT is intrinsically simpler than OMT (Poisson instead of Monge--Ampere equation), we expect OIT to outperform OMT based approaches.

\subsection{Registration of letters: J to V}\label{ex:compatible:imaging}

This example illustrates and explains why OIT, i.e., \autoref{prob:oit}, is \emph{not} suitable for image registration.
Recall that the algorithm developed for \autoref{prob:oit} works for any background metric.
Thus, given a source density $\mu_0\in\Dens(M)$, we can \emph{construct} a background metric $\g$ such that~$\vol_{\g} = \mu_0$ (various ways of doing this are explained in~\autoref{sec:choice_of_background_metric}).
Suppose now we want to match the letters J and V, represented as gray-scale functions~$I_0$ and~$I_1$ on $M=\mathbb{T}^{2}$.
We might have to add some background density for black pixels, since $I_0$ and $I_1$ must be strictly positive in order for \autoref{alg:lifting} to be well-defined.
Then we construct the \emph{conformal background metric} $\g$ such that $\vol_{\g} = \mu_0$.
With step-size $\varepsilon = 0.05$ ($20$~steps) and background density~$0.2$ (lowest grey-scale value, white corresponds to~$1.0$),
we get the following sequence of warps.

\begin{center}
\begin{tabular}{@{}c@{}c@{}c@{}c@{}c@{}c@{}}
	\includegraphics[width=.16\textwidth]{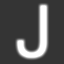}\,
	&
	\includegraphics[width=.16\textwidth]{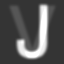}\, 
	&
	\includegraphics[width=.16\textwidth]{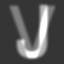}\, 
	&
	\includegraphics[width=.16\textwidth]{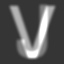}\, 
	&
	\includegraphics[width=.16\textwidth]{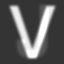}\,
	&
	\includegraphics[width=.16\textwidth]{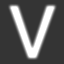}
	\\[-1ex]
	\small{Source} & & & & & \small{Target}
\end{tabular}
\end{center}

\noindent
This is simply blending between the images, as foreseen by the formula~\eqref{eq:fisher_rao_geodesics} for Fisher--Rao geodesics.
The corresponding mesh deformation and Jacobian of the inverse at the final point look as follows.
\begin{center}
\begin{tabular}{@{}cc@{}}
	\small Inverse warp & \small Inverse Jacobian $\abs{D\varphi^{-1}}$ \\
	\includegraphics[width=.45\textwidth]{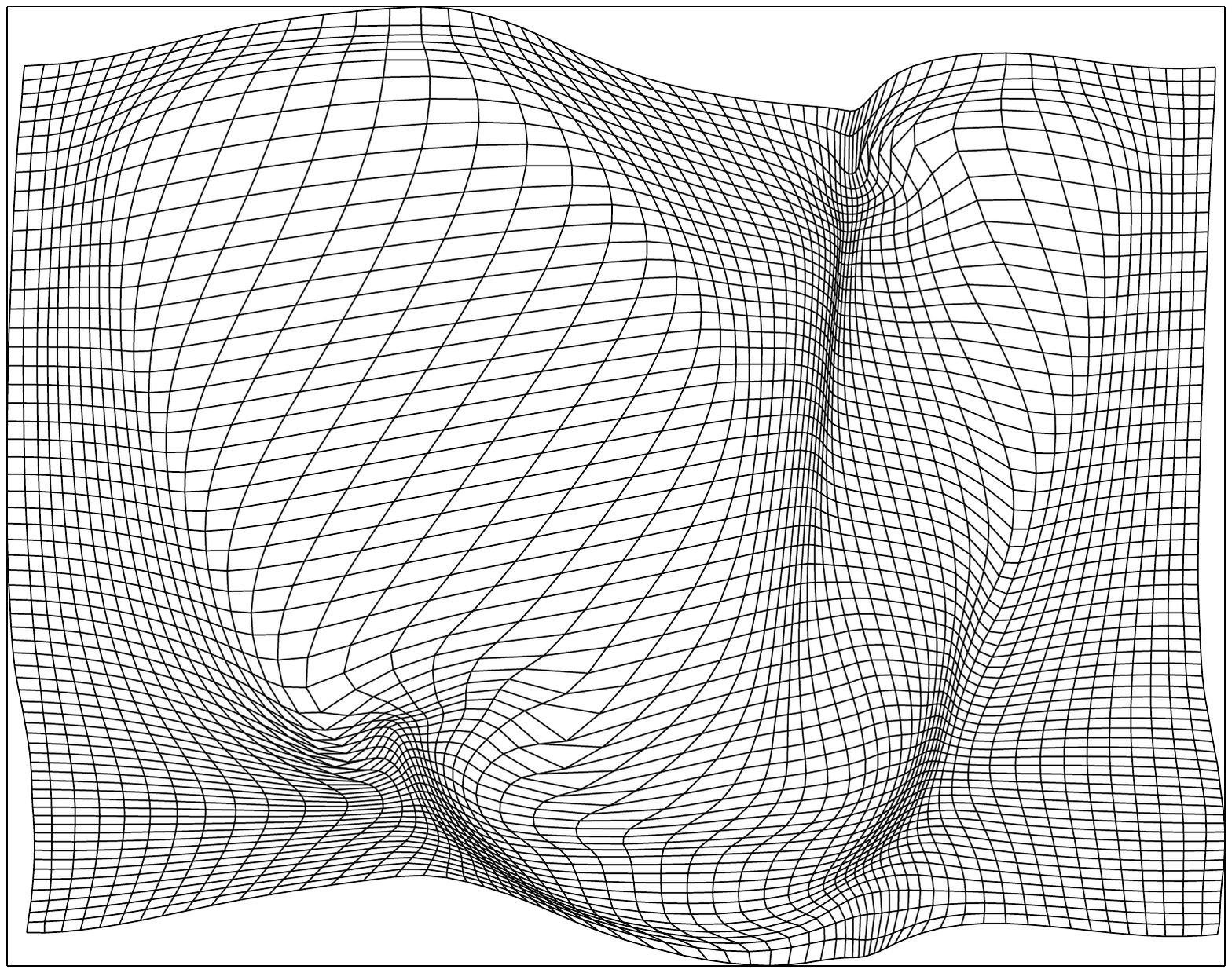} &
	\includegraphics[width=.45\textwidth]{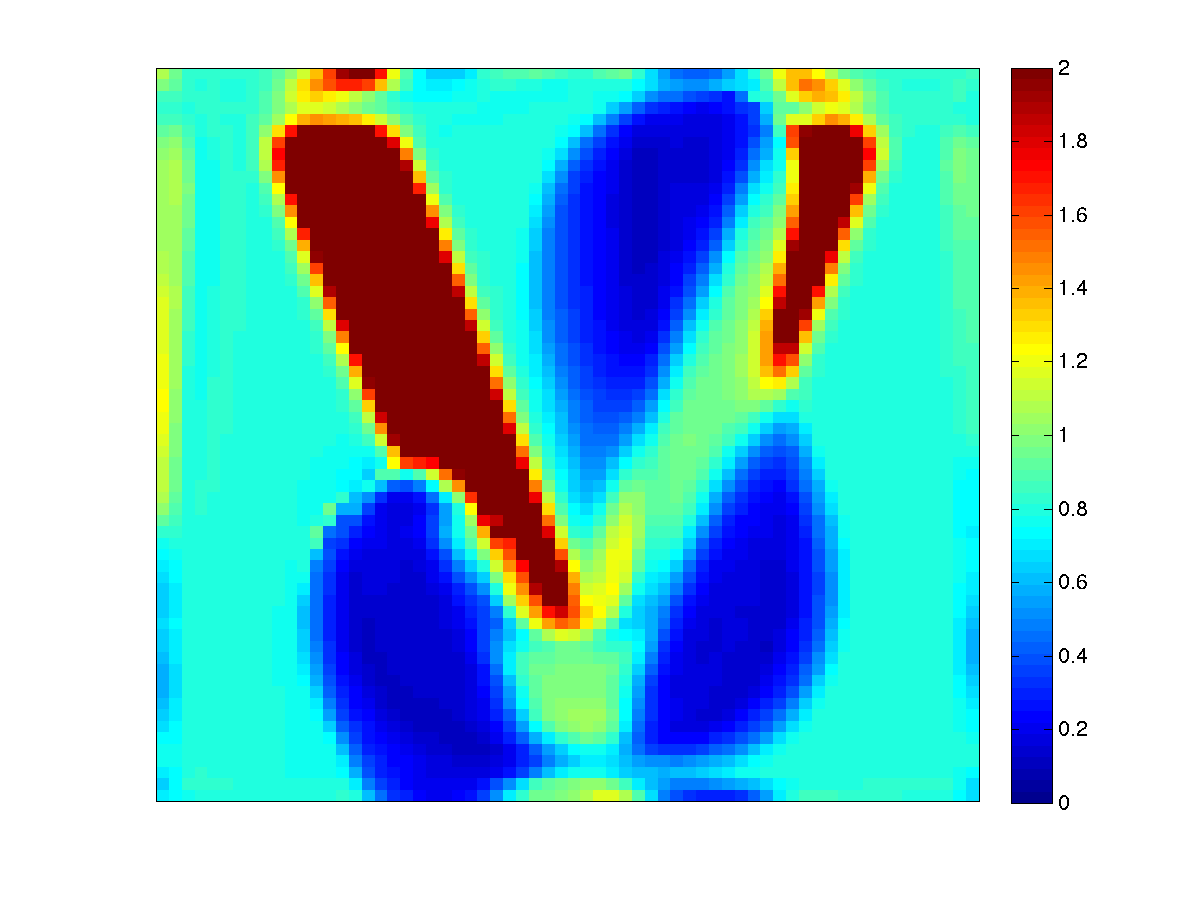} 
\end{tabular}
\end{center}
\noindent
This is \emph{not} a satisfactory registration: instead of transporting the white pixels of the J to the white pixels of the V, the resulting diffeomorphism produces white pixels by compressing the background pixels.
This example shows that, although \autoref{prob:oit} allows matching of any pair of densities by using a compatible metric, only those applications where $\mu_0$ is the standard volume form are likely to be of interest.
A remedy for more general, non-compatible matching applications is developed next.

\section{Matching with non-compatible background metric}\label{sec:density_matching_non_compatible}

In \autoref{sec:density_matching} we derived an algorithm for solving \autoref{prob:exact} and \autoref{prob:inexact} for the case when the background metric $\g$ is compatible with $\mu_0$.
In this section we want to derive an algorithm for the situation of a non-compatible background metric. 
When the background metric $\g$ is non-compatible, the solution to \autoref{prob:inexact} with respect to the information metric~\eqref{eq:H1dot} on $\Diff(M)$ and Fisher--Rao metric~\eqref{eq:fisher_rao_metric} on $\Dens(M)$ is still obtained by a geodesic curve $\ph(t)$.
However, $\ph(t)$ is not horizontal and therefore does not project to a geodesic on the space of densities.
As a consequence, the main ingredient of our efficient lifting algorithm---the explicit formula for geodesics on $\Dens(M)$---cannot be used. 
From a geometric standpoint, the problem is that the information metric $\GI$ does not descend to $\Diff_{\mu_0}(M)\backslash \Diff(M)$ unless $\mu_0=\vol$.

To numerically solve the density matching problem using the LDDMM techniques developed in~\cite{BeMiTrYo2005} is plausible, but computationally expensive. 
In the following, we shall instead study a slightly modified matching problem, for which efficient algorithms can still be obtained.

\subsection{Inexact matching with the divergence-metric.}

The modification resides in exchanging the information metric $\GI$ for the degenerate divergence-metric $\Gdiv$, given in equation~\eqref{eq:div_metric}. 
The degeneracy of the divergence-metric is characterized by 
\begin{equation}\label{eq:divv_kernel}
	\divv(U\circ\varphi^{-1})=0 \iff \Gdiv_{\varphi}(U,U) = 0,
\end{equation}
so the kernel is given by the tangent directions of the fibres of the principal bundle~\eqref{eq:principal_bundle_diff_dens}, explained in \autoref{sub:bundle_structure}.
As mentioned in the introduction, the divergence-metric descends to the Fisher--Rao metric.
If $\distdiv$ denotes the distance function of~$\Gdiv$, we have
\begin{equation}\label{eq:descend_metric_degenerate}
	\distdiv(\id,\varphi) = \distF(\vol,\varphi^*\vol) = \distF(\varphi_*\vol,\vol).
\end{equation}
In consequence, the inexact density matching problem with $\Gdiv$ and $\GF$ is the following.
\begin{problem}[Inexact matching with divergence-metric]\label{prob:inexact_Gdiv}
	Given $\mu_0,\mu_1\in \Densbar(M)$, find $\varphi\in\Diff(M)$ minimizing 
	\begin{equation}\label{eq:two_component_matching}
		E(\varphi) = E(\varphi;\mu_0,\mu_1) = \sigma \distF^2(\varphi_*\vol,\vol) + \distF^2(\varphi_*\mu_0,\mu_1),
	\end{equation}
	where $\sigma > 0$ is a regularization parameter, penalizing change of volume.
\end{problem}

Notice that we allow the source and target densities to belong to the completion $\Densbar(M)$.
This relaxation is possible because of equation~\eqref{eq:descend_metric_degenerate} and the fact that the action of $\Diff(M)$ extends naturally from $\Dens(M)$ to $\Densbar(M)$.
For applications of \autoref{prob:inexact_Gdiv} the relaxation is important, as it allows us to treat images as densities.
This only works for inexact matching, since Moser's lemma requires strictly positive densities.

Due to the degeneracy of the divergence-metric, a solution to \autoref{prob:inexact_Gdiv} is not unique.
Indeed, with 
\begin{align}
	\Diff_{\vol,\mu_0}(M) &= \Diff_{\vol}(M)\cap\Diff_{\mu_0}(M), \\
	\Diff_{\vol,\mu_1}(M) &= \Diff_{\vol}(M)\cap\Diff_{\mu_1}(M),
\end{align}
we have the following result. 

\begin{lemma}\label{lem:degeneracy}
Let $\varphi\in \Diff(M)$, $\eta_0\in \Diff_{\vol,\mu_0}(M)$, and $\eta_1\in \Diff_{\vol,\mu_1}(M)$.	
Then
\begin{align}
	E(\eta_1\circ\varphi)=E(\varphi)=E(\varphi\circ\eta_0)\,.
\end{align}
\end{lemma}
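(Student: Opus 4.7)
The plan is to unpack $E$ term by term and exploit two elementary properties: functoriality of the push-forward action, $(\varphi\circ\psi)_* = \varphi_*\circ \psi_*$, and $\Diff(M)$-invariance of the Fisher--Rao distance, $\distF(\mu,\nu) = \distF(\psi_*\mu,\psi_*\nu)$ for every $\psi\in\Diff(M)$ (cited right after \autoref{lem:H1dot_descending} and underlying the descending property used for $\Gdiv$).

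For the right-composition equality $E(\varphi\circ\eta_0) = E(\varphi)$ I would simply compute
\begin{equation*}
(\varphi\circ\eta_0)_*\vol = \varphi_*\bigl((\eta_0)_*\vol\bigr) = \varphi_*\vol, \qquad
(\varphi\circ\eta_0)_*\mu_0 = \varphi_*\bigl((\eta_0)_*\mu_0\bigr) = \varphi_*\mu_0,
\end{equation*}
using that $\eta_0\in\Diff_{\vol,\mu_0}(M)$ fixes both $\vol$ and $\mu_0$. Plugging into~\eqref{eq:two_component_matching} gives $E(\varphi\circ\eta_0)=E(\varphi)$ directly, with no metric invariance needed.

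For the left-composition equality $E(\eta_1\circ\varphi) = E(\varphi)$ the two push-forwards \emph{do} change, so functoriality alone is not enough. Here I would apply $(\eta_1)_*$ as an ``outer'' diffeomorphism and invoke the diffeomorphism-invariance of $\distF$: since $\eta_1\in\Diff_{\vol,\mu_1}(M)$ satisfies $(\eta_1)_*\vol=\vol$ and $(\eta_1)_*\mu_1=\mu_1$, we get
\begin{align*}
\distF^2\bigl((\eta_1\circ\varphi)_*\vol,\vol\bigr) &= \distF^2\bigl((\eta_1)_*\varphi_*\vol,(\eta_1)_*\vol\bigr) = \distF^2(\varphi_*\vol,\vol), \\
\distF^2\bigl((\eta_1\circ\varphi)_*\mu_0,\mu_1\bigr) &= \distF^2\bigl((\eta_1)_*\varphi_*\mu_0,(\eta_1)_*\mu_1\bigr) = \distF^2(\varphi_*\mu_0,\mu_1).
\end{align*}
Summing with the weight $\sigma$ yields $E(\eta_1\circ\varphi)=E(\varphi)$.

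There is no real obstacle; the only subtle point is that diffeomorphism-invariance of $\distF$ must be used for the left action (whereas the right action needs only the stabilizer property). In fact, one can see this structurally: right invariance of $\Gdiv$ gives $\distdiv(\id,\varphi) = \distdiv(\id,\varphi\circ\eta_0)$ automatically for any $\eta_0$, while the second, $\mu_0$-dependent, term of $E$ needs $\eta_0\in\Diff_{\mu_0}(M)$ to be invariant under right composition; dually, for left composition the first term requires $\eta_1\in\Diff_{\vol}(M)$ and the second $\eta_1\in\Diff_{\mu_1}(M)$, explaining the intersection defining $\Diff_{\vol,\mu_1}(M)$.
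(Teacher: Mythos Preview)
Your proof is correct and follows essentially the same approach as the paper: the right-composition identity uses only the stabilizer property $(\eta_0)_*\vol=\vol$, $(\eta_0)_*\mu_0=\mu_0$, while the left-composition identity additionally invokes diffeomorphism-invariance of $\distF$ together with $(\eta_1)_*\vol=\vol$, $(\eta_1)_*\mu_1=\mu_1$. The only cosmetic difference is that the paper pulls back by $\eta_1$ (using $\eta_1^{-1}\in\Diff_{\vol,\mu_1}(M)$) rather than rewriting the second argument as $(\eta_1)_*$ of itself, but this is the same invariance used in the same way.
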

\begin{proof} 
Since $\eta_1\in \Diff_{\vol,\mu_1}(M)$ we also have that $\eta_1^{-1}\in \Diff_{\vol,\mu_1}(M)$, since the intersection of two groups is again a group. 
Using the invariance of the Fisher--Rao metric we get
\begin{align*}
	E(\eta_1\circ\varphi) &= \sigma\distF^2( (\eta_1)_*\varphi_*\vol,\vol)+\distF^2( (\eta_1)_*\varphi_*\mu_0,\mu_1)\\
		&=\sigma\distF^2(\varphi_*\vol,\eta_1^*\vol)+\distF^2( \varphi_*\mu_0,\eta_1^*\mu_1)= E(\varphi).
\end{align*}
Again, using the invariance of the Fisher--Rao metric we get
\begin{align*}
	E(\varphi\circ\eta_0) &= \sigma\distF^2( \varphi_*(\eta_0)_*\vol, \vol)+\distF^2( \varphi_*(\eta_0)_*\mu_0, \mu_1)
	\\&=\sigma\distF^2( \varphi_*\vol, \vol)+\distF^2( \varphi_*\mu_0, \mu_1)=E(\varphi).
\end{align*}
This proves the lemma.
\end{proof}

Thus, the functional $E$ has two different descending properties:
\begin{enumerate}
	\item It descends to a functional on the right cosets $\Diff_{\vol,\mu_1}(M)\backslash\Diff(M)$, right-invariant with respect to $\Diff_{\vol,\mu_0}(M)$. 
	The corresponding right principal bundle is
	\begin{equation}\label{eq:right_principle_bundle}
	\xymatrix@R3ex@C1ex{
			\Diff_{\vol,\mu_1}(M)\; \ar@{^{(}->}[r] & \Diff(M) \ar[d]
			\\ & \Diff_{\vol,\mu_1}(M)\backslash\Diff(M)\, .
		}
	\end{equation}

	\item It descends to a functional on the left cosets $\Diff(M)/\Diff_{\vol,\mu_0}(M)$, left-invariant with respect to $\Diff_{\vol,\mu_1}(M)$.
	The corresponding left principal bundle is
	\begin{equation}\label{eq:left_principle_bundle}
	\xymatrix@R3ex@C1ex{
			\Diff_{\vol,\mu_0}(M)\; \ar@{^{(}->}[r] & \Diff(M) \ar[d]
			\\ & \Diff(M)/\Diff_{\vol,\mu_0}(M)\, . 
		}
	\end{equation}
\end{enumerate}
We need a strategy to tackle the degeneracy problem explained in \autoref{lem:degeneracy}.
Our approach is simple: we impose the additional constraint on $\varphi$ that it should be connected to the identity by a curve that is $\GI$-orthogonal to the fibres of both principle bundles~\eqref{eq:right_principle_bundle} and~\eqref{eq:left_principle_bundle}.
Then \autoref{prob:inexact_Gdiv} can be solved efficiently by a gradient flow, as we now explain.

\subsection{Gradient flow for inexact matching}

Let $\nablaI E$ denote the gradient of $E$ with respect to the information metric~$\GI$.
Our approach for \autoref{prob:inexact_Gdiv}, i.e., for minimizing the functional $E$ in~\eqref{eq:two_component_matching}, is to discretize the gradient flow
\begin{equation}\label{eq:gradient_flow}
	\dot\varphi = -\nablaI E(\varphi).
\end{equation}
Since the functional $E$ is constant along the fibres of the principal bundles~\eqref{eq:right_principle_bundle} and~\eqref{eq:left_principle_bundle}, the curve traced out by the gradient flow is $\GI$-orthogonal to both fibres, as desired.

Through formulae~\eqref{eq:FR_distance} and \eqref{eq:two_component_matching} we obtain an explicit formula for $E$.
We can then derive the gradient $\nablaI E$. 
It is convenient to carry out the calculations using the sphere representation for the densities via the $W$-map~\eqref{eq:sphere_map}.

\begin{proposition} \label{pro:gradient_of_E}
	The $\GI$--gradient of the divergence-metric matching functional~$E$ in~\eqref{eq:two_component_matching} is given by
	\begin{multline}\label{eq:gradient_of_E}
		\nablaI E(\varphi) = A^{-1}\bigg(
			\sigma c\Big(\int_M\sqrt{\abs{D\varphi^{-1}}}\, \vol\Big)\grad  \sqrt{\abs{D\varphi^{-1}}}  + \\
			 c\Big(\int_M W_\varphi W_1 \vol\Big)\Big(W_1 \grad W_\varphi
			- W_\varphi \grad W_1\Big)
		\bigg)\circ\varphi \, ,
	\end{multline}
	where $A$ is the inertia operator~\eqref{eq:inertia_operator_explicit}, $W_1 \coloneqq W(\mu_1)$, $W_\varphi \coloneqq W(\varphi_*\mu_0)$, and
	\begin{equation}\label{eq:c_fun}
		c\colon [0,1]\to\R, \quad c(x) =  \frac{\arccos\big(\frac{x}{\vol(M)}\big)}{\sqrt{1-\frac{x^2}{\vol(M)^2} }}.
	\end{equation}
\end{proposition}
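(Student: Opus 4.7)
The plan is to compute the differential $dE(\varphi)$ acting on $U \in T_\varphi\Diff(M)$, re-express it as an $L^2$ pairing $\int_M \g(\xi, u)\vol$ against a vector field $\xi \in \Xcal(M)$ determined by $\varphi$, and then invoke right-invariance of $\GI$ to conclude $\nablaI E(\varphi) = (A^{-1}\xi)\circ\varphi$. Concretely, write $U = u\circ\varphi$ and perturb via $\varphi_\eps = (\id + \eps u)\circ\varphi$. Right-invariance~\eqref{eq:right_invariance} gives $\GI_\varphi((A^{-1}\xi)\circ\varphi, U) = \int_M \g(\xi,u)\vol$, so identifying $\xi$ is all that remains.

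First I would peel off the $\arccos^2$ using~\eqref{eq:FR_distance}: both terms of $E$ have the form $\vol(M)\arccos^2(y/\vol(M))$ for a scalar functional $y(\varphi)$, and the chain rule through~\eqref{eq:c_fun} yields
\begin{equation*}
	\frac{d}{d\eps}\Big|_0 \distF^2 = -2\, c(y)\, \frac{dy}{d\eps}\Big|_0.
\end{equation*}
This accounts for every appearance of $c$ in~\eqref{eq:gradient_of_E} and isolates the task of varying the two scalar quantities $y_1 \coloneqq \int_M\sqrt{\abs{D\varphi^{-1}}}\vol$ and $y_2\coloneqq \int_M W_\varphi W_1\vol$.

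Next I would eliminate $\varphi^{-1}$ from the integrands by the change of variables $x = \varphi(x')$, which gives $y_1 = \int_M\sqrt{\abs{D\varphi}}\vol$ and $y_2 = \int_M \sqrt{\abs{D\varphi}}\,W(\mu_0)\,(W_1\circ\varphi)\vol$. Under $\varphi_\eps = (\id+\eps u)\circ\varphi$ one has $\abs{D\varphi_\eps} = \abs{D\varphi}(1 + \eps\,\divv(u)\circ\varphi) + o(\eps)$ and $W_1\circ\varphi_\eps = W_1\circ\varphi + \eps\,\g(\grad W_1, u)\circ\varphi + o(\eps)$. Substituting back via $x' = \varphi^{-1}(x)$ and integrating by parts to trade $\divv(u)$ for $\g(\cdot, u)$ produces
\begin{equation*}
	\frac{dy_1}{d\eps}\Big|_0 = -\tfrac{1}{2}\int_M \g\!\left(\grad\sqrt{\abs{D\varphi^{-1}}},\,u\right)\vol,
\end{equation*}
and for $y_2$ the divergence contribution $-\tfrac12 \g(W_1\grad W_\varphi + W_\varphi\grad W_1, u)$ combines with the chain-rule contribution $+\g(W_\varphi\grad W_1, u)$ to give the asymmetric expression $-\tfrac12\,\g(W_1\grad W_\varphi - W_\varphi\grad W_1, u)$. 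Multiplying each by $-2c(y_i)$ and summing identifies $\xi$ as the bracketed expression in~\eqref{eq:gradient_of_E}; applying $A^{-1}$ (well-defined by~\eqref{eq:A_inverse}) and right-translating by $\varphi$ closes the argument.

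The main obstacle is the bookkeeping in the last step: the near-cancellation between the divergence contribution and the chain-rule contribution in $dy_2$ is what turns the naively symmetric gradient $W_1\grad W_\varphi + W_\varphi\grad W_1$ into the asymmetric combination $W_1\grad W_\varphi - W_\varphi\grad W_1$. A sign or factor error in the change of variables $x = \varphi(x')$ (especially keeping track of the two competing Jacobian factors $\sqrt{\abs{D\varphi}}$ and $\abs{D\varphi^{-1}}$) would destroy this cancellation, so the chief care must go into verifying those substitutions and the resulting integration by parts.
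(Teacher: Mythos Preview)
Your proposal is correct and follows essentially the same strategy as the paper: compute the differential of $E$ along a variation $\dot\varphi = v\circ\varphi$, peel off the $\arccos^2$ via the chain rule to produce the scalar $c(\cdot)$, integrate by parts to express the result as an $L^2$ pairing against $v$, and then invert $A$ and right-translate. The only real difference is in language: the paper works intrinsically with Lie derivatives and exterior calculus (using $\frac{d}{d\eps}\varphi(\eps)_*\mu = -\LieD_v\varphi_*\mu$, then $\LieD_v = \dd\interior_v$ and Stokes' theorem to integrate by parts), whereas you pull everything back to the source domain by the change of variables $x=\varphi(x')$ and compute with Jacobian determinants directly. Your route makes the near-cancellation producing $W_1\grad W_\varphi - W_\varphi\grad W_1$ a bit more transparent; the paper's route avoids the back-and-forth change of variables and the attendant Jacobian bookkeeping you flag as the main hazard. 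Either way the computation is the same.
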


\begin{proof}
	Take a curve $\varphi(\epsilon)$ such that $\varphi(0) = \varphi$, i.e., a variation of $\varphi$.
	Then
	\begin{multline}\label{eq:first_der}
		\frac{\ud}{\ud \epsilon}\Big|_{\epsilon=0} E(\varphi(\epsilon)) = \GI_{\varphi}(\nabla_G E, \dot\varphi(0)) =
		\\  
		\sigma \left\langle \frac{\delta \distF^{2}}{\delta \mu}(\varphi_*\vol,\vol), \frac{\ud}{\ud \epsilon}\Big|_{\epsilon=0} \varphi(\epsilon)_*\vol \right\rangle +
		\left\langle \frac{\delta \distF^{2}}{\delta \mu}(\varphi_*\mu_0,\mu_1), \frac{\ud}{\ud \epsilon}\Big|_{\epsilon=0} \varphi(\epsilon)_*\mu_0 \right\rangle .
	\end{multline}
	We now write the variation of the form $\dot\varphi(\epsilon) = v\circ\varphi(\epsilon)$ for some vector field $v\in\Xcal(M)$.
	For any $\mu\in\Dens(M)$ we then have
	\begin{equation}\label{eq:Jacobian_variation}
		\frac{\ud}{\ud \epsilon}\Big|_{\epsilon=0} \varphi(\epsilon)_*\mu = -\LieD_{v}\varphi_{*} \mu.
	\end{equation}

	Let $r\coloneqq \sqrt{\mu(M)} = \sqrt{\nu(M)}$.
	Then, from~\eqref{eq:distFF}
	\begin{equation}\label{eq:fisher_distance_flat}
		\distF^{2}(\mu,\nu) = r^2 \arccos\left(\frac{1}{r^2}\int_M W(\mu) W(\nu)\vol \right)^2,
	\end{equation}
	so the variational derivative is given by
	\begin{equation}\label{eq:fisher_distance_flat_derivative}
		\left\langle\frac{\delta \distF^{2}}{\delta \mu}(\mu,\nu),\eta \right\rangle = 
		\underbrace{\frac{\arccos\left(\frac{1}{r^2}\int_M W(\mu) W(\nu)\vol\right)}{\sqrt{1-\left(\frac{1}{r^2}\int_M W(\mu) W(\nu)\vol\right)^2}}}_{c(W(\mu)W(\nu))}
		\left\langle- 2 W(\nu )\frac{\delta W(\mu)}{\delta \mu},\eta\right\rangle
	\end{equation}
	Since $\pair{\frac{\delta W(\mu)}{\delta \mu},\eta} = \frac{1}{2 W(\mu)}\frac{\eta}{\vol}$ we get
	\begin{equation}\label{eq:fisher_distance_flat_derivative2}
		\left\langle\frac{\delta \distF^{2}}{\delta \mu}(\mu,\nu),\eta \right\rangle = -c(W(\mu)W(\nu))\int_M \underbrace{ \frac{W(\nu)}{W(\mu)}}_{F(\mu,\nu)} \eta .
	\end{equation}
	Notice that (i) $F(\mu,\mu)=0$ since the variation $\eta$ preserves the total volume,
	(ii) $\varphi_*F(\mu,\nu) = F(\varphi_*\mu,\varphi_*\nu)$ reflecting the invariance of the Fisher--Rao distance,
	(iii) $\lim_{r\to\infty} c(W(\mu)W(\nu)) = \frac{\pi}{2}$ reflecting the simplified formula when the volume is infinite, and
	(iv) $c(W(\varphi_*\vol)W(\vol)) = c(\sqrt{\abs{D\varphi^{-1}}})$.
	The first term in~\eqref{eq:first_der} now becomes
	\begin{multline}\label{eq:first_term}
		\sigma c(\sqrt{\abs{D\varphi^{-1}}}) \left\langle \frac{\delta \distF^{2}}{\delta \mu}(\varphi_*\vol,\vol), \frac{\ud}{\ud \epsilon}\Big|_{\epsilon=0} \varphi(\epsilon)_*\vol \right\rangle = \\ 
		\sigma c(\sqrt{\abs{D\varphi^{-1}}})\int_M F(\varphi_*\vol,\vol) \LieD_{v}\varphi_*\vol  = \\
		\sigma c(\sqrt{\abs{D\varphi^{-1}}})\int_M F(\varphi_*\vol,\vol) \dd\interior_{v}\varphi_*\vol  = \\
		-\sigma c(\sqrt{\abs{D\varphi^{-1}}})\int_M \dd F(\varphi_*\vol,\vol) \wedge W(\varphi_*\vol)^2\interior_{v}\vol  = \\
		-\sigma c(\sqrt{\abs{D\varphi^{-1}}})\int_M \dd \left( \frac{1}{W(\varphi_*\vol)}\right) \wedge W(\varphi_*\vol)^2\interior_{v}\vol  = \\
		\sigma c(\sqrt{\abs{D\varphi^{-1}}})\int_M \dd W(\varphi_*\vol) \wedge \interior_{v}\vol  = \\
		\sigma c(\sqrt{\abs{D\varphi^{-1}}})\int_M \g(A A^{-1}\grad (W(\varphi_*\vol)), v) \vol  = \\
		\GI_{\id}\left(\sigma c(\sqrt{\abs{D\varphi^{-1}}}) A^{-1}\grad (W(\varphi_*\vol)), \dot\varphi\circ\varphi^{-1} \right)  = \\
		\GI_{\varphi}\left(\sigma c(\sqrt{\abs{D\varphi^{-1}}}) A^{-1}\grad \sqrt{\abs{D\varphi^{-1}}}\circ\varphi, \dot\varphi \right).
	\end{multline}
	Using the notation $W_\varphi \coloneqq W(\varphi_*\mu_0)$ and $W_1\coloneqq W(\mu_1)$, the second term in \eqref{eq:first_der} becomes
	\begin{multline}\label{eq:second_term}
		\left\langle \frac{\delta \distF^{2}}{\delta \mu}(\varphi_*\mu_0,\mu_1), \frac{\ud}{\ud \epsilon}\Big|_{\epsilon=0} \varphi(\epsilon)_*\mu_0 \right\rangle = \\ 
		c(W_\varphi W_1)\int_M F(\varphi_*\mu_0,\mu_1) \LieD_{v}\varphi_*\mu_0  = \\
		c(W_\varphi W_1)\int_M F(\varphi_*\mu_0,\mu_1) \dd\interior_{v}\varphi_*\mu_0  = \\
		-c(W_\varphi W_1)\int_M \dd F(\varphi_*\mu_0,\mu_1) \wedge W_\varphi^2\interior_{v}\vol  = \\
		-c(W_\varphi W_1)\int_M \dd \left( \frac{W_1}{W_\varphi}\right) \wedge W_\varphi^2\interior_{v}\vol  = \\
		c(W_\varphi W_1)\int_M \left( W_1\dd W_\varphi - W_\varphi\dd W_1 \right) \wedge \interior_{v}\vol  = \\
		c(W_\varphi W_1)\int_M \g\left(A A^{-1}( W_1\grad W_\varphi - W_\varphi\grad W_1),v \right) \vol  = \\
		\GI_{\varphi}\left( c(W_\varphi W_1) A^{-1}( W_1\grad W_\varphi - W_\varphi\grad W_1)\circ\varphi, \dot\varphi \right).
	\end{multline}
	Put together, \eqref{eq:first_term} and \eqref{eq:second_term} proves the formula~\eqref{eq:gradient_of_E}.
\end{proof}

Based on \autoref{pro:gradient_of_E}, we can now discretize the gradient flow~\eqref{eq:gradient_flow}.
Indeed, a numerical method is given by the following algorithm.
\begin{algorithm}\label{alg:gradient_flow}
Assume we have a numerical way to represent functions, vector fields, and diffeomorphisms on~$M$, and numerical methods for
(i) composing diffeomorphisms with functions,
(ii) composing diffeomorphisms with diffeomorphisms,
(iii) computing the divergence of a vector field,
(vi) computing the gradient of a functions, and
(v) computing the inverse of the inertia operator~$A$ in~\eqref{eq:inertia_operator_explicit}.
A computational algorithm for the gradient flow~\eqref{eq:gradient_flow} is then given as follows.
\begin{enumerate}
	\item Choose a stepsize $\varepsilon>0$ and initialize the diffeomorphisms $\varphi_0 = \id$, $\varphi^{-1}_{0} = \id$ and a function $J_0 = 1$.
	Precompute $W(\mu_1)$ and $\grad(W(\mu_1))$. Set $k\leftarrow 0$.

	\item Compute action on the source
	\begin{equation}\label{eq:action}
		f_k = W(\mu_0)\circ\varphi^{-1}_k \sqrt{J_k}
	\end{equation}

	\item Compute coefficients $a_k = c\big(\int_M\sqrt{J_k}\, \vol\big)$ and $b_{k} = c\big(\int_M f_k W(\mu_1) \vol\big)$, then compute momentum
	\begin{equation}\label{eq:vel}
		m_k =
			\sigma a_k \grad (\sqrt{J_k})
			+ b_k W(\mu_{1}) \grad f_k
			- b_k f_k \grad(W(\mu_1)).
	\end{equation}
	(If $\vol(M)=\infty$, set $a_k=b_k=1$.)

	\item Compute the vector field infinitesimally generating the negative gradient
	\begin{equation}\label{eq:gradient_comp}
		v_k = -A^{-1}m_k.
	\end{equation}
	
	\item Construct an approximation $\psi^{-1}_k$ to $\exp(-\varepsilon v_k)$, for example $\psi^{-1}_k = \id - \varepsilon v_k$.

	\item Update the inverse diffeomorphism: $\varphi^{-1}_{k+1} = \varphi^{-1}_k\circ\psi^{-1}_k$.

	\item Update the inverse Jacobian using Lie--Trotter splitting\footnotemark
	\begin{equation}\label{eq:trotter_update}
		J_{k+1} = (J_k\circ\psi^{-1}_k)\mathrm{e}^{-\varepsilon\divv(v_k)}.
	\end{equation}
	
	\item Construct an approximation $\psi_k$ to $\exp(\varepsilon v_k)$, for example $\psi_k = \id + \varepsilon v_k$. (output)

	\item Update the forward diffeomorphism: $\varphi_{k+1} = \psi_k\circ\varphi_k$. (output)

	\item Set $k \leftarrow k+1$ and continue from step 2.

\end{enumerate}
\end{algorithm}
\footnotetext{Here one could use the Strang splitting to obtain higher order.}

\subsection{Geometry of the gradient flow} \label{sub:densdensaction}
In this section we describe the geometry associated with the divergence-metric functional $E$ in equation~\eqref{eq:two_component_matching} and the corresponding gradient flow~\eqref{eq:gradient_flow}.

The diffeomorphism group acts diagonally on $\Dens(M)\times\Densbar(M)$ by $\varphi\cdot(\mu,\nu) = (\varphi_*\mu,\varphi_*\nu)$.
The isotropy group of $(\mu,\nu) \in \Dens(M)\times\Densbar(M)$, i.e., the subgroup of $\Diff(M)$ that leaves $(\mu,\nu)$ invariant, is given by 
\begin{equation}\label{eq:two_component_isotropy_group}
	\Diff_{\mu,\nu}(M)\coloneqq\Diff_{\mu}(M)\cap\Diff_{\nu}(M).	
\end{equation}
The action of $\Diff(M)$ on $\Dens(M)\times\Densbar(M)$ is \emph{not} transitive, so there is more than one group orbit.
The group orbit through $(\vol,\mu_0)$, given by 
\begin{equation}\label{eq:two_component_orbit}
	\mathrm{Orb}(\vol,\mu_0) \coloneqq \Diff(M)\cdot(\vol,\mu_0)\subset \Dens(M)\times\Dens(M),
\end{equation}
is a way to represent the quotient set $\Diff(M)/\Diff_{\vol,\mu_0}(M)$.
This set is potentially complicated (an orbifold), but let us assume we stay away from singular points so we can work with $\mathrm{Orb}(\vol,\mu_0)$ as a submanifold of $\Dens(M)\times\Densbar(M)$.
The principal bundle~\eqref{eq:left_principle_bundle} can then be represented as
\begin{equation}\label{eq:new_principle_bundle}
\xymatrix@R3ex@C3ex{
			\Diff_{\vol,\mu_0}(M)\; \ar@{^{(}->}[r] & \Diff(M) \ar[d]^-{\pi_{\vol,\mu_0}} 
			\\ & \mathrm{Orb}(\vol,\mu_0)\, ,
		}
\end{equation}
where
\begin{equation}\label{eq:big_projection}
	\pi_{\vol,\mu_0}(\varphi) = \varphi\cdot(\vol,\mu_0) = (\varphi_*\vol,\varphi_*\mu_0).
\end{equation}

The manifold $\Dens(M)\times\Densbar(M)$ comes with a metric, namely
\begin{equation}\label{eq:GFF_metric}
	\GFF_{(\mu,\nu)}\big((\alpha_\mu,\alpha_\nu),(\beta_\mu,\beta_\nu)\big) = \sigma \GF_{\mu}(\alpha_{\mu},\beta_{\mu}) + \GF_{\nu}(\alpha_{\nu},\beta_{\nu}).
\end{equation}
(Notice that $\GF$ is naturally extended to a metric on $\Densbar(M)$ by the $W$-map~\eqref{eq:sphere_map}.)
The corresponding distance is given by
\begin{equation}\label{eq:distFF}
	\distFF^2((\mu_1,\nu_1),(\mu_2,\nu_2)) = \sigma \distF^{2}(\mu_1,\mu_2) + \distF^{2}(\nu_1,\nu_2).
\end{equation}
To connect back to the divergence-metric matching in \autoref{prob:inexact_Gdiv}, the key observation is that 
\begin{equation}
	E(\varphi) = \distFF^{2}((\vol,\mu_1),(\vol,\mu_0)\cdot\varphi) = \distFF^{2}((\vol,\mu_1),\pi_{\vol,\mu_0}(\varphi)),	
\end{equation}
so $E(\varphi)$ is simply the function $(\mu,\nu)\mapsto \distFF^{2}((\vol,\mu_1),(\mu,\nu))$ on $\Dens(M)\times\Densbar(M)$ lifted to $\Diff(M)$.

Let us now discuss how the metric $\GI$ fits into this geometry.

\begin{lemma}\label{lem:two_component_descend}
	The information metric $\GI$ on $\Diff(M)$ is descending with respect to the principal bundle~\eqref{eq:new_principle_bundle}, i.e., it descends to a metric $\Gorb$ on $\mathrm{Orb}(\vol,\mu_0)$.
\end{lemma}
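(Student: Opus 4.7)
The plan is to reduce the statement to the general principle that a right-invariant weak Riemannian metric on a Lie group descends to any quotient by right multiplication by a closed subgroup. The ingredients are: (i) identifying the fibres of $\pi_{\vol,\mu_0}$ as right cosets, (ii) invoking right-invariance of $\GI$ to produce a fibre-preserving isometric action of the stabilizer, and (iii) using this to make the horizontal-lift construction of $\Gorb$ independent of the chosen lift.

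First I would verify that the fibres of $\pi_{\vol,\mu_0}$ are exactly the right cosets of $\Diff_{\vol,\mu_0}(M)$. If $\pi_{\vol,\mu_0}(\psi)=\pi_{\vol,\mu_0}(\varphi)$, then $(\varphi^{-1}\circ\psi)_{*}\vol=\vol$ and $(\varphi^{-1}\circ\psi)_{*}\mu_{0}=\mu_{0}$, whence $\varphi^{-1}\circ\psi\in\Diff_{\vol,\mu_0}(M)$, so $\psi\in\varphi\circ\Diff_{\vol,\mu_0}(M)$. Consequently the vertical distribution $\mathcal{V}_{\varphi}\coloneqq \ker T_{\varphi}\pi_{\vol,\mu_0}$ is, after right-trivialization, the Lie algebra of $\Diff_{\vol,\mu_0}(M)$, i.e. the space of vector fields $u\in\Xcal(M)$ satisfying simultaneously $\LieD_u\vol=0$ and $\LieD_u\mu_0=0$.

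Second, by right-invariance of $\GI$ (equation~\eqref{eq:right_invariance}), right-translation $R_{\eta}(\varphi)\coloneqq\varphi\circ\eta$ by any $\eta\in\Diff_{\vol,\mu_0}(M)$ is an isometry of $(\Diff(M),\GI)$. The relation $\pi_{\vol,\mu_0}\circ R_{\eta}=\pi_{\vol,\mu_0}$ shows that $R_{\eta}$ preserves every fibre, hence preserves $\mathcal{V}$ and therefore its $\GI$-orthogonal complement, the horizontal distribution $\mathcal{H}_{\varphi}$. Define
\begin{equation*}
	\Gorb_{(\nu_1,\nu_2)}(\alpha,\beta)\coloneqq \GI_{\varphi}(\tilde\alpha,\tilde\beta),
\end{equation*}
where $\varphi\in\pi_{\vol,\mu_0}^{-1}(\nu_1,\nu_2)$ and $\tilde\alpha,\tilde\beta\in\mathcal{H}_{\varphi}$ are the (unique) horizontal lifts of $\alpha,\beta\in T_{(\nu_1,\nu_2)}\mathrm{Orb}(\vol,\mu_0)$. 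Independence of the choice of $\varphi$ follows because any other preimage is $\varphi\circ\eta$ and $T_{\varphi}R_{\eta}$ maps $\mathcal{H}_{\varphi}$ isometrically onto $\mathcal{H}_{\varphi\circ\eta}$, intertwining $T\pi_{\vol,\mu_0}$ by virtue of $\pi_{\vol,\mu_0}\circ R_{\eta}=\pi_{\vol,\mu_0}$. Non-degeneracy of $\Gorb$ is inherited from non-degeneracy of $\GI$ on $\mathcal{H}_{\varphi}$, and $\pi_{\vol,\mu_0}$ is by construction a Riemannian submersion.

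The main obstacle is not the algebraic descent, which is formal, but the underlying geometric regularity: as the authors emphasise, $\mathrm{Orb}(\vol,\mu_0)$ may carry orbifold singularities at diffeomorphisms where the isotropy jumps, and the horizontal lift must exist pointwise despite $\GI$ being only a weak metric. At regular points the splitting $T_{\varphi}\Diff(M)=\mathcal{V}_{\varphi}\oplus\mathcal{H}_{\varphi}$ can be established by the same Hodge-type argument used in \autoref{lem:H1dot_descending} (and in \cite{Mo2014}), applied simultaneously to the two constraints $\LieD_u\vol=0$ and $\LieD_u\mu_0=0$; this is the technical point where care is required, but once it is granted the descent of $\GI$ to $\Gorb$ follows directly from the three steps above.
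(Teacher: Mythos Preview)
Your proposal is correct and follows essentially the same route as the paper: both arguments rest on the observation that the fibres of $\pi_{\vol,\mu_0}$ are the cosets $\varphi\circ\Diff_{\vol,\mu_0}(M)$ and that right-invariance of $\GI$ (equation~\eqref{eq:right_invariance}) immediately yields the required invariance of the metric along fibres. The paper's proof is simply a one-line compression of your steps (i)--(iii); your additional remarks on orbifold regularity and the weak-metric horizontal splitting are more cautious than the paper itself, and the only slip is terminological---the cosets $\varphi\circ\Diff_{\vol,\mu_0}(M)$ are conventionally called \emph{left} cosets, matching the paper's identification $\mathrm{Orb}(\vol,\mu_0)\simeq\Diff(M)/\Diff_{\vol,\mu_0}(M)$.
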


\begin{proof}
	Translation by $\psi\in\Diff_{\vol,\mu_0}(M)$ of a vector $U\in T_{\varphi}\Diff(M)$ along the fibre of~\eqref{eq:new_principle_bundle} is given by $U\mapsto U\circ\psi$.
	Therefore, the metric $\GI$ is descending if and only if
	\begin{equation}\label{eq:descending_condition}
		\GI_{\varphi}(U,V) = \GI_{\varphi\circ\psi}(U\circ\psi,V\circ\psi), \quad \forall\, U,V\in \mathcal{H}^{\vol,\mu_0}_{\varphi},
	\end{equation}
	where $\mathcal{H}^{\vol,\mu_0}_{\varphi}$ is the distribution that is $\GI$-orthogonal to the tangent spaces of the fibres of~\eqref{eq:new_principle_bundle}.
	Since $\GI$ is right-invariant, condition~\eqref{eq:descending_condition} is automatically true.
\end{proof}

Although $\GI$ descends to a metric $\Gorb$, the associated distance function $\distorb$ is \emph{not} explicitly computable, in particular it is not given by $\distFF$.
If it was, the result in \autoref{lem:two_component_descend} would allow us to use the same technique as in \autoref{sec:density_matching} to solve the non-compatible, non-degenerate matching problem using $\GI$ and $\GF$, namely to lift the geodesics on $\mathrm{Orb}(\vol,\mu_0)$.
Our remedy is to exchange $\distorb$ for the distance function $\distFF$ on the ambient manifold $\Dens(M)\times\Densbar(M)$.
The geometry of our set-up is illustrated in \autoref{fig:two_component}.

\begin{figure}
	\centering
	\includegraphics[width=0.7\textwidth]{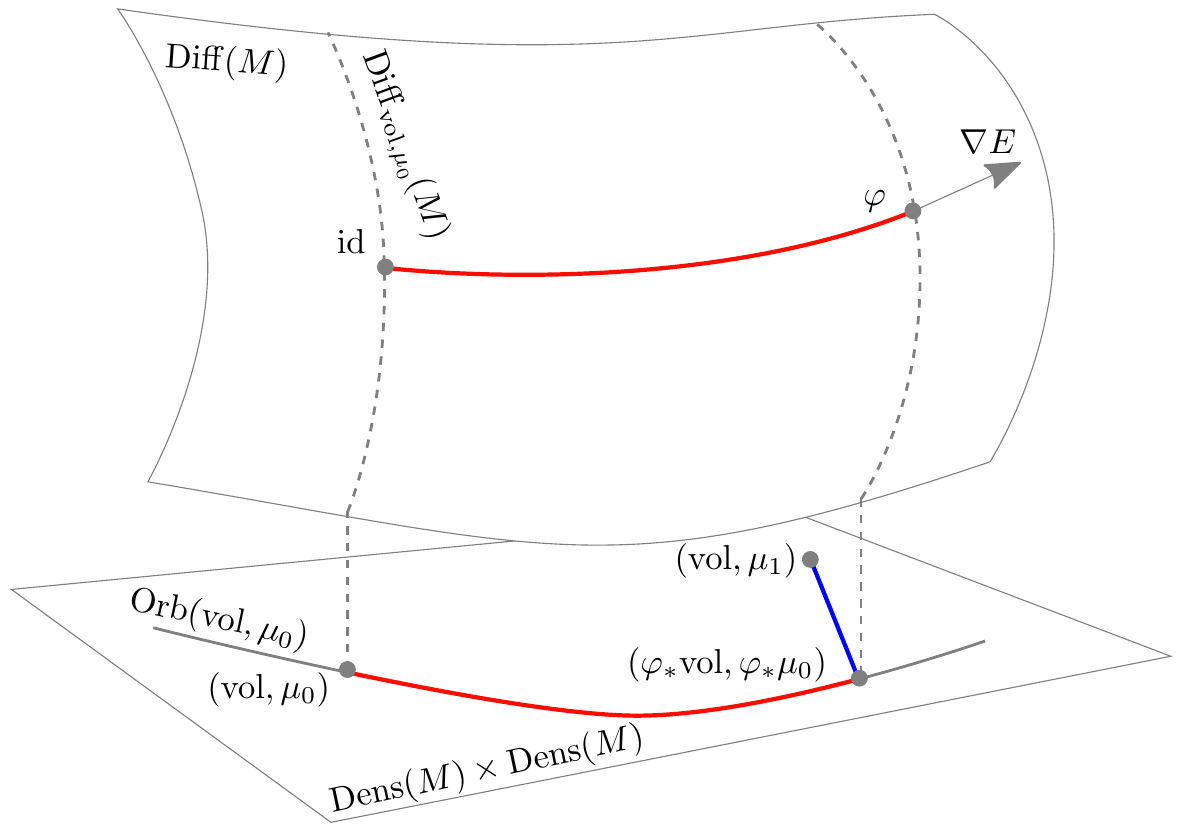}
	\caption{Illustration of the geometry associated with inexact density matching using the divergence-metric.
	The gradient flow on $\Diff(M)$ descends to a gradient flow on the orbit $\mathrm{Orb}(\vol,\mu_0)$.
	While constrained to $\mathrm{Orb}(\vol,\mu_0)\subset \Dens(M)\times\Densbar(M)$, this flow strives to minimize the ambient Fisher--Rao distance~$\distFF$ to $(\vol,\mu_1)$.
	}
	\label{fig:two_component}
\end{figure}

\subsection{Two-component gradient flow}

Since both the information metric $\GI$ and the functional $E$ descend with respect to~\eqref{eq:new_principle_bundle}, the gradient flow~\eqref{eq:gradient_flow} induces a gradient flow on $\mathrm{Orb}(\vol,\mu_0)\subset \Dens(M)\times\Densbar(M)$.
This allows us to represent the gradient flow on~$\Dens(M)\times\Densbar(M)$.

\begin{proposition}\label{lem:descending_gradient_flow}
	The gradient flow~\eqref{eq:gradient_flow} descends to a two-component gradient flow equation on $\Dens(M)\times\Densbar(M)$, constrained to stay on $\mathrm{Orb}(\vol,\mu_0)$.
	Expressed in the variables $J = \abs{D\varphi^{-1}}$ and $P = (\varphi_*\mu_0)/\vol$ it is given by
	\begin{equation}\label{eq:constrained_gradient_flow_explicit}
		\begin{split}
			\dot J &= -v\cdot \grad J - J \divv(v) \\
			\dot P &= -v\cdot \grad P - P \divv(v)
		\end{split}
	\end{equation}
	with
	\begin{multline}
		v = A^{-1}\bigg(
			\sigma c\Big(\int_M\sqrt{J}\, \vol\Big)\grad  \sqrt{J}  + \\
			 c\Big(\int_M W(\mu_1) \sqrt{P}\, \vol\Big)\Big(W(\mu_1) \grad \sqrt{P}
			- \sqrt{P} \grad W(\mu_1)\Big)
		\bigg).
	\end{multline}
\end{proposition}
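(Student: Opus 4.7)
The plan is to combine \autoref{lem:two_component_descend}, which says that $\GI$ descends to a metric $\Gorb$ on $\mathrm{Orb}(\vol,\mu_0)$, with \autoref{lem:degeneracy}, which implies that $E$ is constant along the fibres of the principal bundle~\eqref{eq:new_principle_bundle} and hence descends to a well-defined function $\bar E$ on $\mathrm{Orb}(\vol,\mu_0)$. Given these two facts, the $\GI$-gradient flow $\dot\varphi = -\nablaI E(\varphi)$ on $\Diff(M)$ projects under $\pi_{\vol,\mu_0}$ to the $\Gorb$-gradient flow of $\bar E$ on the orbit. Embedding $\mathrm{Orb}(\vol,\mu_0)\hookrightarrow \Dens(M)\times\Densbar(M)$ via $\varphi\mapsto(\varphi_*\vol,\varphi_*\mu_0)$, the flow on the orbit is therefore described by the pair of time-dependent densities $(\varphi_*\vol,\varphi_*\mu_0)$, which in the stated coordinates is $(J\vol,P\vol)$ with $J=\abs{D\varphi^{-1}}$ and $P=(\varphi_*\mu_0)/\vol$.

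Next I would compute how $J$ and $P$ evolve under the flow. Let $v=\dot\varphi\circ\varphi^{-1}$ denote the right-trivialized velocity. Using the standard identity
\begin{equation*}
\partial_t(\varphi(t)_*\nu) = -\LieD_v(\varphi(t)_*\nu),
\end{equation*}
valid for any fixed density $\nu$ (verified by differentiating $\int_M f\cdot \varphi_*\nu = \int_M (f\circ\varphi)\nu$ against test functions $f\in C^\infty(M)$), and combining with $\LieD_v(h\vol) = (v\cdot\grad h + h\divv v)\vol$ for any $h\in C^\infty(M)$, I apply this once to $\nu=\vol$ to obtain $\dot J = -v\cdot\grad J - J\divv v$, and once to $\nu=\mu_0$ to obtain $\dot P = -v\cdot\grad P - P\divv v$. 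This gives the two advection equations~\eqref{eq:constrained_gradient_flow_explicit}.

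Finally, to identify $v$ explicitly I invoke \autoref{pro:gradient_of_E}, which computes $\nablaI E(\varphi)\circ\varphi^{-1}$ as $A^{-1}$ applied to the stated expression in $W_\varphi=W(\varphi_*\mu_0)=\sqrt{P}$ and $\sqrt{\abs{D\varphi^{-1}}}=\sqrt{J}$. Substituting $J$ and $P$ in place of these quantities, and absorbing the gradient-flow sign convention, produces the formula stated for $v$. It is important to check that the resulting expression really is a function of $(J,P)$ alone and not of $\varphi$ itself, which is precisely what the descending property of $\GI$ guarantees; without it one could not meaningfully project the equation to the orbit.

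The main obstacle is conceptual rather than computational: one must carefully justify that \autoref{lem:degeneracy} makes $E$ descend along the left principal bundle~\eqref{eq:left_principle_bundle} (whose quotient is identified with $\mathrm{Orb}(\vol,\mu_0)$) so that the gradient of $E$ on $\Diff(M)$ is horizontal with respect to that bundle, and hence its projection is the gradient of $\bar E$ on the orbit. Once the descent is in place, the rest is a direct application of the Lie-derivative transport identity and \autoref{pro:gradient_of_E}.
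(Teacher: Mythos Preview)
Your proposal is correct and follows essentially the same route as the paper: observe (via \autoref{pro:gradient_of_E}) that the right-trivialized velocity $v$ depends on $\varphi$ only through $J$ and $P$, then apply the pushforward transport identity $\partial_t(\varphi_*\nu)=-\LieD_{\dot\varphi\circ\varphi^{-1}}(\varphi_*\nu)$ to $\nu=\vol$ and $\nu=\mu_0$. The only difference is emphasis: you frame the descent conceptually through \autoref{lem:degeneracy} and \autoref{lem:two_component_descend}, whereas the paper simply reads off from the explicit formula that $v$ is a function of $(J,P)$ alone; both yield the same computation.
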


\begin{proof}
	Let $\varphi(t)$ be the solution of the gradient flow~\eqref{eq:gradient_flow}.
	Then $\dot\varphi(t)\circ\varphi(t)^{-1} = \nablaI E(\varphi(t))\circ\varphi(t)^{-1} \eqqcolon v(t)$ depends on $\varphi(t)$ only through $\varphi(t)_*\vol$ and $\varphi(t)_*\mu_0$, i.e., through $J$ and $P$.
	We also have
	\begin{equation}
		\frac{\dd}{\dd t} \varphi(t)_*\vol = -\LieD_{v(t)}\varphi(t)_*\vol = -\left(\LieD_{v(t)}J + \divv(v(t))J\right)\vol
	\end{equation}
	and
	\begin{equation}
		\frac{\dd}{\dd t} \varphi(t)_*\mu_0 = -\LieD_{v(t)}\varphi(t)_*\mu_0 = -\left(\LieD_{v(t)}P + \divv(v(t))P\right)\vol.
	\end{equation}
	This proves the result.
\end{proof}

This proposition tells us that an alternative way to compute the gradient flow~\eqref{eq:gradient_flow} is to solve first~\eqref{eq:constrained_gradient_flow_explicit} and then the lifting equations for the principle bundle~\eqref{eq:new_principle_bundle}, thereby obtaining a horizontal curve in~$\Diff(M)$.
We have not investigated this approach in detail.

\subsection{Relation to matching with compatible background metric}

In this section we want to show the relation between the compatible background approach in \autoref{sec:density_matching} and the gradient flow approach developed here.
Recall that the solutions to \autoref{prob:oit} and \autoref{prob:inexact_Gdiv} are obtained by the inverse of the endpoint of a horizontal geodesic, obtained by lifting a Fisher--Rao geodesic.
As a consequence, we have the following two results.

\begin{lemma}\label{thm:relation_to_oit}
	Let $\mu_0 =\vol, \mu_1\in\Dens(M)$ and let $[0,1]\mapsto \gamma(t)\in\Diff(M)$ be the horizontal $\GI$-geodesic such that $\gamma(1)^{-1}$ solves the OIT problem (\autoref{prob:oit}).
	Then $\nabla E(\gamma(t);\mu_1,\vol)$ is parallel with $\dot\gamma(t)$.
\end{lemma}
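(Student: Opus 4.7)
The key observation is that $E(\cdot;\mu_1,\vol)$ descends to a function on $\Dens(M)$ via the Moser projection $\pi\colon\varphi\mapsto\varphi^*\vol$. Because $\Diff(M)$ acts on $(\Dens(M),\GF)$ by isometries (see the remark following \autoref{lem:H1dot_descending}), one has $\distF(\varphi_*\nu,\vol)=\distF(\nu,\varphi^*\vol)$ for every $\nu\in\Dens(M)$, so
\begin{equation*}
  E(\varphi;\mu_1,\vol) = \sigma\distF^2(\vol,\varphi^*\vol) + \distF^2(\mu_1,\varphi^*\vol) = e(\pi(\varphi)),
\end{equation*}
where $e(\mu):=\sigma\distF^2(\vol,\mu)+\distF^2(\mu_1,\mu)$. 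Thus $E$ is the pullback of the smooth function $e$ on $\Dens(M)$.

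By \autoref{lem:H1dot_descending}, the projection $\pi\colon(\Diff(M),\GI)\to(\Dens(M),\GF)$ is a Riemannian submersion. A standard consequence is that $\nabla(e\circ\pi)$ at any point equals the horizontal lift of $\nabla e$ at the projected point; in particular $\nabla E(\gamma(t))$ is the horizontal lift of $\nabla e(\mu(t))$. On the other hand, $\dot\gamma(t)$ is by construction the horizontal lift of $\dot\mu(t)$. Since horizontal lifting is $\R$-linear, it suffices to verify that $\nabla e(\mu(t))$ is proportional to $\dot\mu(t)$ in $T_{\mu(t)}\Dens(M)$.

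This last point is the elementary Riemannian fact that the gradient of $\distF^2(p,\cdot)$ at $q$ equals $-2\exp_q^{-1}(p)$ and is therefore tangent to the (unique) Fisher--Rao geodesic connecting $p$ and $q$. Applying this with $p=\vol$ and with $p=\mu_1$ -- both of which lie on the FR geodesic $\mu(t)$ -- shows that both terms in $\nabla e(\mu(t))$ are tangent to that geodesic, and hence so is their weighted sum. Lifting horizontally then yields $\nabla E(\gamma(t))\parallel\dot\gamma(t)$.

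The only delicate step is the descent identity $E=e\circ\pi$, which rests entirely on the diffeomorphism-invariance of the Fisher--Rao distance; once that identity is in hand, everything else is a formal consequence of the Riemannian submersion property and the variation formula for squared distance functions on a Riemannian manifold.
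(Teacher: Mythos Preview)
Your proof is correct and follows essentially the same route as the paper's: establish the descent identity $E(\varphi;\mu_1,\vol)=e(\pi(\varphi))$ via diffeomorphism-invariance of $\distF$, use the Riemannian submersion $\pi\colon(\Diff(M),\GI)\to(\Dens(M),\GF)$ to identify $\nablaI E$ with the horizontal lift of $\nablaF e$, argue that $\nablaF e$ is tangent to the Fisher--Rao geodesic $\mu(t)=\pi(\gamma(t))$, and lift back. Your justification of the tangency step (via the formula $\nabla\distF^2(p,\cdot)|_q=-2\exp_q^{-1}(p)$) is in fact more explicit than the paper's, which simply invokes that $\pi(\gamma(t))$ is the unique minimizing geodesic between $\vol$ and $\mu_1$.
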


\begin{proof}
	We have $E(\varphi;\mu_1,\vol) = \sigma\distF^{2}(\varphi^*\vol,\vol) + \distF^{2}(\varphi^*\vol,\mu_1)$.
	Thus, $E(\cdot;\mu_1,\vol)$ descends with respect to Moser's principal bundle~\eqref{eq:principal_bundle_diff_dens}.
	Since the information metric $\GI$ descends to the Fisher--Rao metric, the gradient flow descends to a gradient flow on $\Dens(M)$, given by
	\begin{equation}\label{eq:descending_gradient_flow_moser}
		\dot\mu = -\nablaF e(\mu), \quad e(\mu) = \sigma\distF^{2}(\vol,\mu) + \distF^{2}(\mu,\mu_1),
	\end{equation}
	where $\nablaF$ is the gradient with respect to~$\GF$.
	If now $\mu = \pi(\gamma(t))$, then $\nablaF e(\mu)$ is parallel with $\frac{\dd}{\dd t}\pi(\gamma(t))$, since $\pi(\gamma(t))$ is the unique minimizing geodesic between $\vol$ and $\mu_1$.
	The result now follows since two horizontally lifted paths are parallel if they are parallel on~$\Dens(M)$.
\end{proof}

\begin{proposition}\label{pro:gradient_flow_and_oit}
	If $\mu_0=\vol$ and $\mu_1\in\Dens(M)$, then the limit of the gradient flow $\dot\varphi = -\nabla_A E(\varphi;\mu_1,\vol)$, $\varphi(0)=\id$ exists and coincides with the inverse of the solution to the non-exact OIT problem (\autoref{prob:oit_inexact}).
\end{proposition}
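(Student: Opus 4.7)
The plan is to exploit the Riemannian submersion structure together with \autoref{thm:relation_to_oit} to reduce the $\GI$-gradient flow on $\Diff(M)$ to an explicit one-dimensional ODE along the Fisher--Rao geodesic from $\vol$ to $\mu_1$, whose limit coincides with the diffeomorphism $\ph(s)$, $s = 1/(1+\sigma)$, of \autoref{thm:inexact_compatible}.

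First, since $\mu_0 = \vol$, the functional $E(\cdot\,;\mu_1,\vol) = \sigma\distF^{2}(\ph^{*}\vol,\vol) + \distF^{2}(\ph^{*}\vol,\mu_1)$ descends through Moser's principal bundle~\eqref{eq:principal_bundle_diff_dens} to the function $e(\mu) = \sigma\distF^{2}(\mu,\vol) + \distF^{2}(\mu,\mu_1)$ on $\Dens(M)$. By \autoref{lem:H1dot_descending}, $\GI$ descends to $\GF$, so the $\GI$-gradient flow on $\Diff(M)$ projects to the $\GF$-gradient flow $\dot\mu = -\nablaF e(\mu)$. Through the $W$-map of \autoref{sub:fisher_metric}, $e$ becomes a sum of two squared spherical distances from the fixed points $W(\vol)$ and $W(\mu_1)$, so by symmetry $\nablaF e$ is tangent to the great circle through these two points at every point of it. Consequently the projected trajectory, starting at $\mu(0) = \vol$, is confined to this great circle.

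Second, parametrize the circle by arclength $s$ from $\vol$, setting $\theta = \distF(\vol,\mu_1)$. Then $e|_{\mathrm{circle}} = \sigma s^{2} + (\theta - s)^{2}$ is strictly convex with unique minimum at $s^{\star} = \theta/(1+\sigma)$, precisely the point identified in the proof of \autoref{thm:inexact_compatible}. Up to a positive conformal factor arising from the spherical metric, the projected flow reduces to the linear ODE $\dot s = -2(1+\sigma)s + 2\theta$, whose solution starting at $s = 0$ converges monotonically to $s^{\star}$, so $\mu(t) \to \mu(s^{\star}/\theta)$ smoothly on $\Dens(M)$.

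Finally, lift back to $\Diff(M)$. By \autoref{thm:relation_to_oit}, $\nablaI E(\gamma(t);\mu_1,\vol)$ is parallel to $\dot\gamma(t)$ along the horizontal geodesic $\gamma$ whose endpoint $\gamma(1)^{-1}$ solves the OIT problem. The initial velocity of the gradient flow at $\id$ is horizontal (since $E$ is constant along the fibres of~\eqref{eq:principal_bundle_diff_dens}, its $\GI$-gradient is $\GI$-orthogonal to vertical directions) and horizontality is preserved by the right-invariance argument underlying \autoref{lem:H1dot_descending}; combined with uniqueness of horizontal lifts through $\id$, this forces $\ph(t) = \gamma(\rho(t))$ for some reparametrization $\rho$. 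The projection identity $\pi(\ph(t)) = \mu(\rho(t))$ then transfers the convergence $\rho(t) \to 1/(1+\sigma)$, giving $\ph(t) \to \gamma(1/(1+\sigma))$, which by \autoref{thm:inexact_compatible} is exactly the inverse of the solution to \autoref{prob:oit_inexact}. The main obstacle I anticipate is the rigorous justification of this reparametrization step, namely that the gradient flow traces out the horizontal geodesic rather than drifting within its fibre class; this reduces to propagating horizontality along the flow, which in turn follows from the joint descent of $E$ and $\GI$ through Moser's bundle.
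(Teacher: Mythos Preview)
Your proposal is correct and follows the same overall strategy as the paper's proof: descend the $\GI$-gradient flow through Moser's bundle to the $\GF$-gradient flow of $e(\mu)=\sigma\distF^{2}(\mu,\vol)+\distF^{2}(\mu,\mu_1)$ on $\Dens(M)$, establish convergence there, and identify the limit upstairs. The paper dispatches the middle step in one line by invoking strict convexity of $e$ on the (geodesically) convex space $(\Dens(M),\GF)$, whereas you give a more elementary argument by observing that the projected flow is confined to the Fisher--Rao geodesic through $\vol$ and $\mu_1$ and then solving the resulting one-dimensional ODE explicitly; you also spell out the horizontality argument that pins down the limit in $\Diff(M)$, which the paper leaves implicit.
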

\begin{proof}
	Follows since the gradient flow $\dot\varphi = -\nabla_A E(\varphi;\mu_1,\vol)$ descends to the gradient flow~\eqref{eq:descending_gradient_flow_moser} and $e(\mu)$ is a strictly convex functional and $\Dens(M)$ is a convex space with respect to the Fisher--Rao geometry.
\end{proof}

\begin{remark}
In contrast to the previous parts of this section we actually require here that $\mu_1$ is strictly positive. This is indeed a necessary condition: for $\mu_1$ 
on the boundary of $\on{Dens}(M)$ we cannot guarantee the existence of minimizer to the non-exact optimal information transport problem. 
In fact for a target density $\mu_1\in \Densbar(M)$ the optimal deformation $\varphi$ will in general not be a diffeomorphisms, instead it will have a vanishing derivative on certain points or even intervals. 
To guarantee the existence of minimizers also in this situation, one could use a complete metric on the diffeomorphism group as regularization term. 
Possible choices for this include higher order Sobolev metrics \cite{EbMa1970,BEK2014,BrVi2014,MiMu2013} and metrics that are induced by Gaussian kernels \cite{TrYo2005b}. 
\end{remark}

\section{Examples of matching with non-compatible metric}\label{sec:density_matching_non_compatible_examples}

In this section we evaluate the gradient flow based \autoref{alg:gradient_flow} in various examples where $\mu_0,\mu_1\in\Densbar(\mathbb{T}^2)$, i.e., there are regions with vanishing density (represented by black pixels).

For simplicity, in our experiments we consider the flat, periodic torus and use fast fouler transform for inverting the operator~$A$ in~\eqref{eq:inertia_operator_explicit}.
There has been extensive work on fast, efficient solution of Poisson's problem on other manifolds.
See, for example, the review~\cite{DzEl2013}.

\subsection{Registration of letters: J to V}

This example illustrates the capability to produce severely deformed warps, namely to warp the letter~J into the letter~V.
We also examine the effect of the balancing parameter~$\sigma$.

With step-size $\varepsilon = 0.2$, balance $\sigma=0.05$, and $400$~iterations, we get the following energy evolution and sequence of warps.

\begin{center}
\includegraphics[width=.89\textwidth]{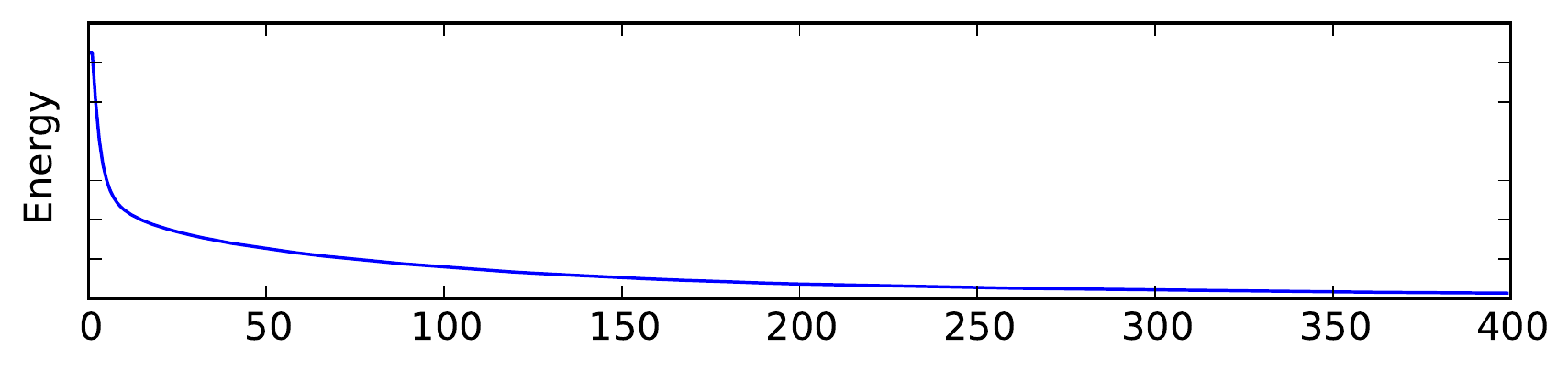}
\\
\begin{tabular}{@{}c@{}c@{}c@{}c@{}c@{}c@{}}
	\includegraphics[width=.16\textwidth]{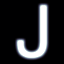}\,
	&	
	\includegraphics[width=.16\textwidth]{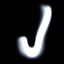}\,
	&	
	\includegraphics[width=.16\textwidth]{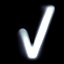}\,
	&	
	\includegraphics[width=.16\textwidth]{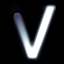}\,
	&	
	\includegraphics[width=.16\textwidth]{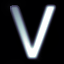}\,
	&	
	\includegraphics[width=.16\textwidth]{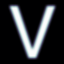}
	\\[-1ex]
	\small{Source} & & & & \small{Final} & \small{Target}
\end{tabular}
\end{center}

\noindent 
Notice in the warp that the pixel-values has changed in regions of large deformations (the upper left part of the final~V).
This is due to expansion.
The corresponding mesh deformation and Jacobian of the inverse at the final point look as follows.
\begin{center}
\begin{tabular}{@{}cc@{}}
	\small Inverse warp & \small Inverse Jacobian $\abs{D\varphi^{-1}}$ \\
	\includegraphics[height=.4\textwidth]{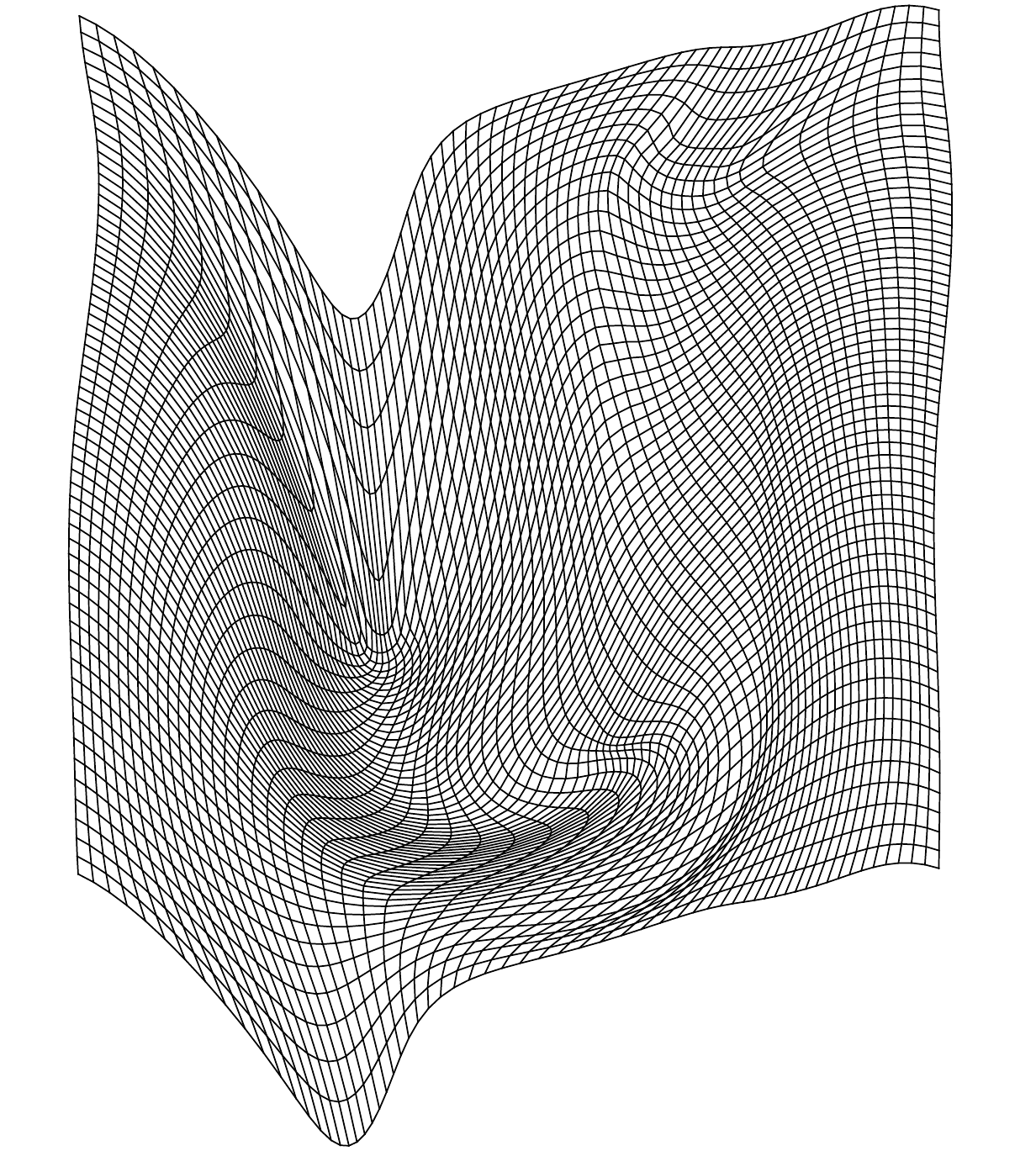}\,
	&	
	\includegraphics[height=.4\textwidth]{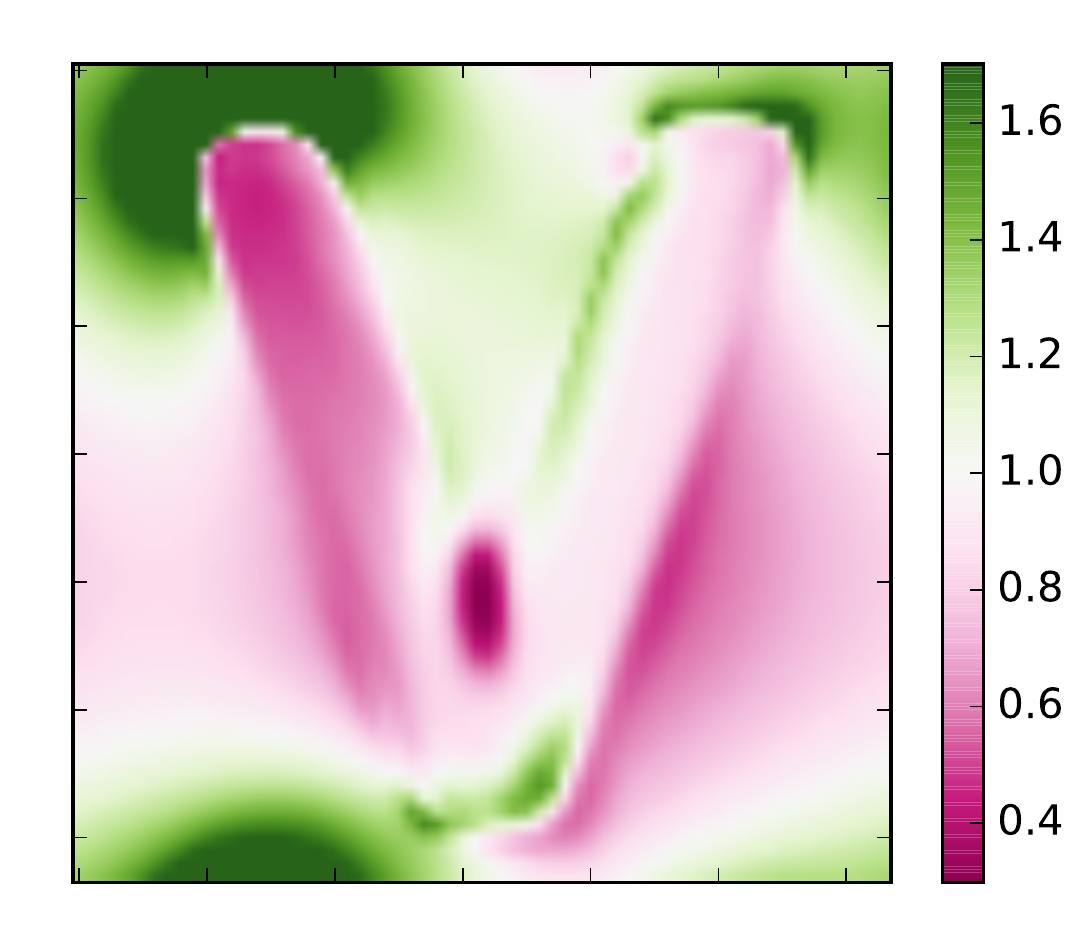}
\end{tabular}
\end{center}
\noindent
Notice how the lower part of the~J is stretched out to the left part of the~V.

Let us now do the same experiment but with the larger balancing parameter $\sigma = 5$.

\begin{center}
\includegraphics[width=.89\textwidth]{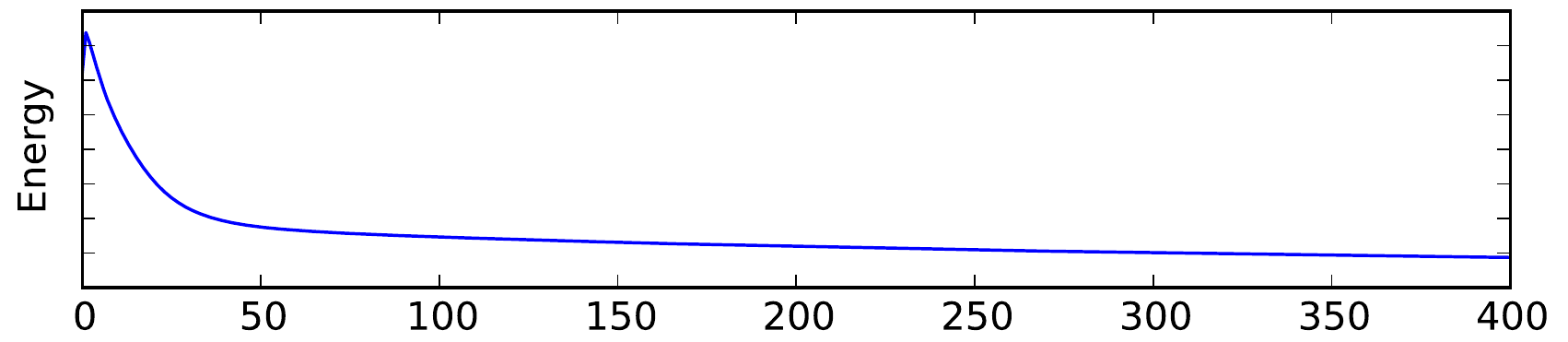}
\\
\begin{tabular}{@{}c@{}c@{}c@{}c@{}c@{}c@{}}
	\includegraphics[width=.16\textwidth]{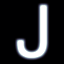}\,
	&	
	\includegraphics[width=.16\textwidth]{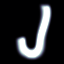}\,
	&	
	\includegraphics[width=.16\textwidth]{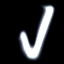}\,
	&	
	\includegraphics[width=.16\textwidth]{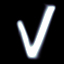}\,
	&	
	\includegraphics[width=.16\textwidth]{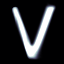}\,
	&	
	\includegraphics[width=.16\textwidth]{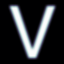}
	\\[-1ex]
	\small{Source} & & & & \small{Final} & \small{Target}
\end{tabular}
\end{center}

\noindent 
Notice here that the warp rarely changes the pixel-values, at the price of less expansion of the left part of the~V.
This is due a smaller amount of compression and expansion, as seen below in the corresponding mesh deformation and Jacobian of the inverse.
\begin{center}
\begin{tabular}{@{}cc@{}}
	\small Inverse warp & \small Inverse Jacobian $\abs{D\varphi^{-1}}$ \\
	\includegraphics[height=.4\textwidth]{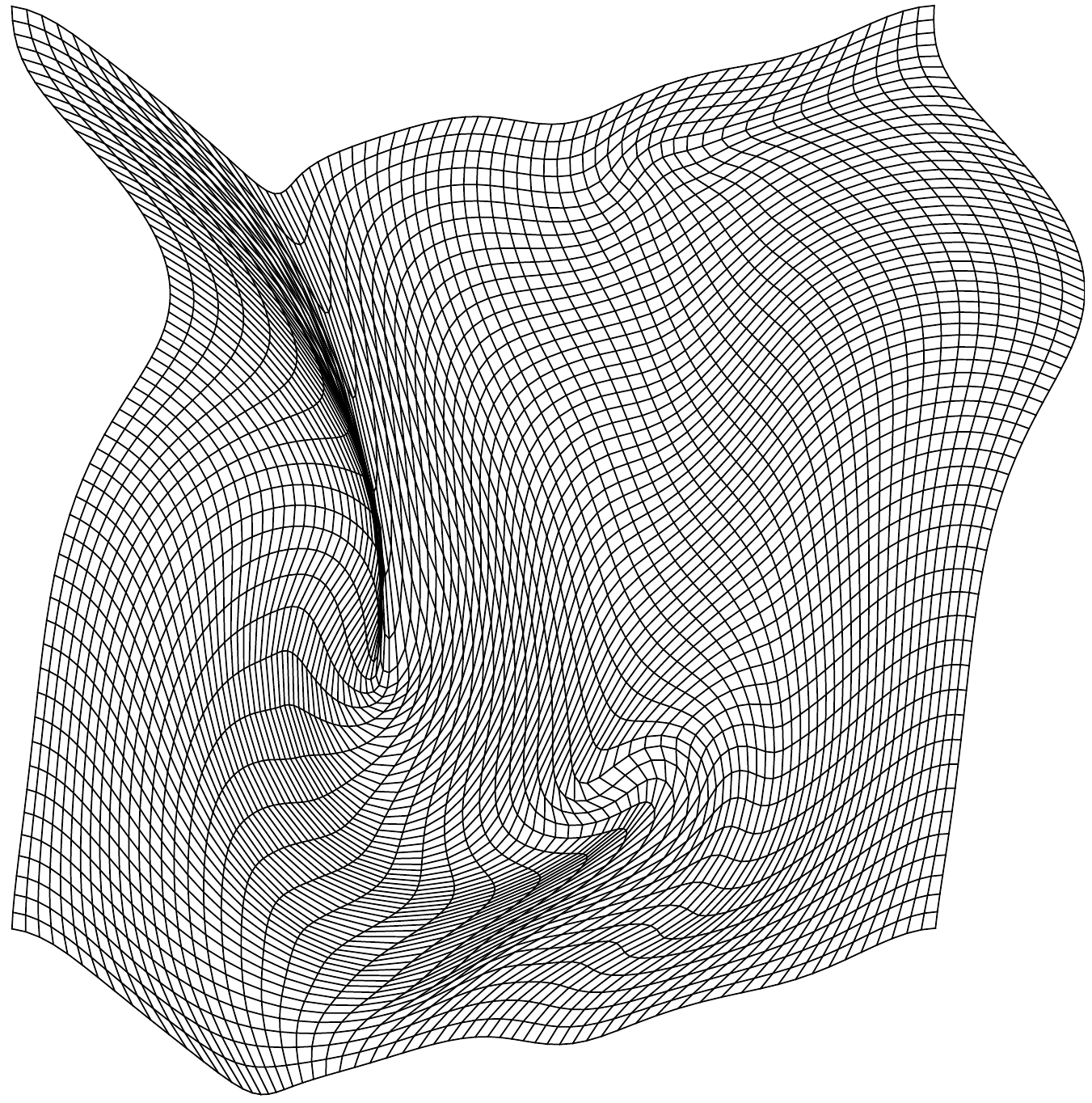}\,
	&	
	\includegraphics[height=.4\textwidth]{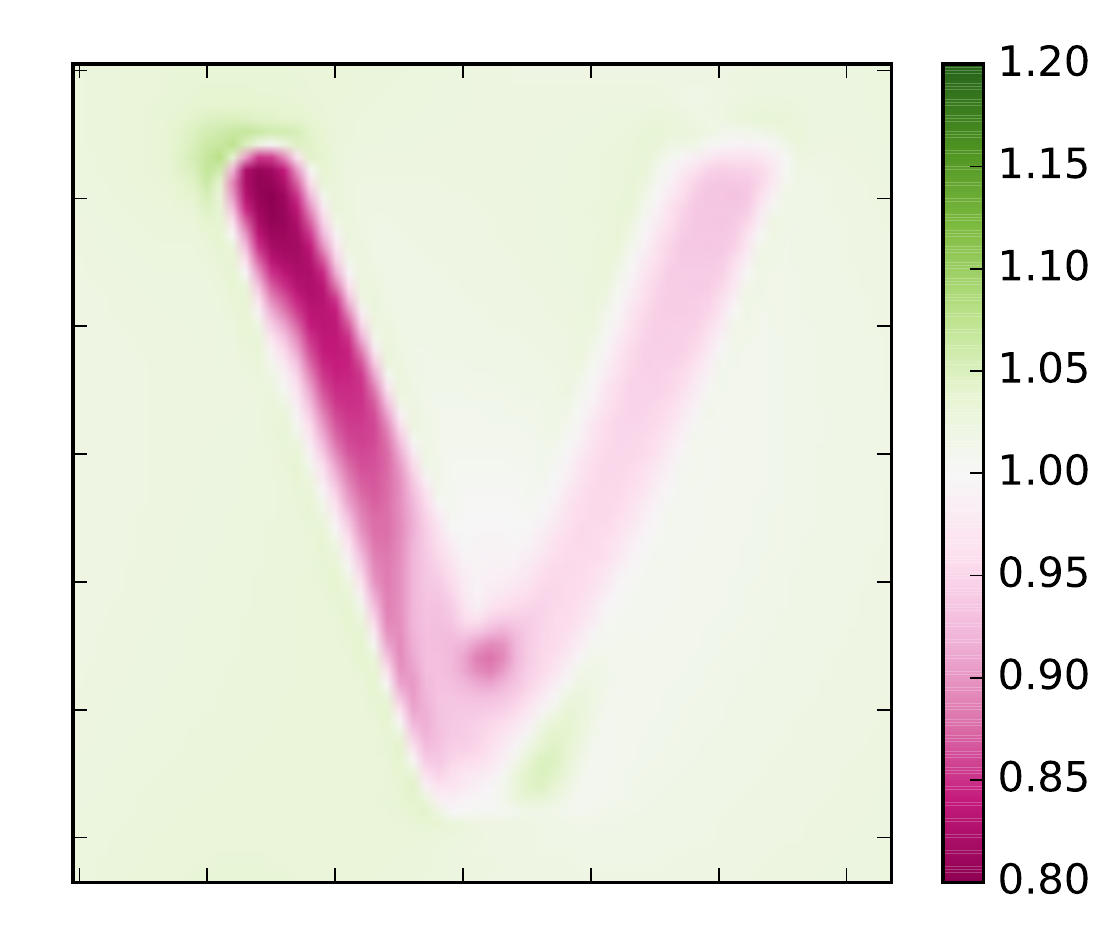}
\end{tabular}
\end{center}
\noindent
Compared to the smaller $\sigma$, the Jacobian determinant is more regular and closer to one and the mesh deformation is almost volume-preserving.
This example illustrates nicely the influence of the balancing parameter.

\subsection{Registration of noisy x-ray images}

In this example we illustrate the use of \autoref{alg:gradient_flow} for registration of low-resolution, noisy x-ray images of human hands.
With step-size $\varepsilon = 0.2$, balance $\sigma=0.1$, and $400$~iterations, we get the following energy evolution and sequence of warps.

\begin{center}
\includegraphics[width=.89\textwidth]{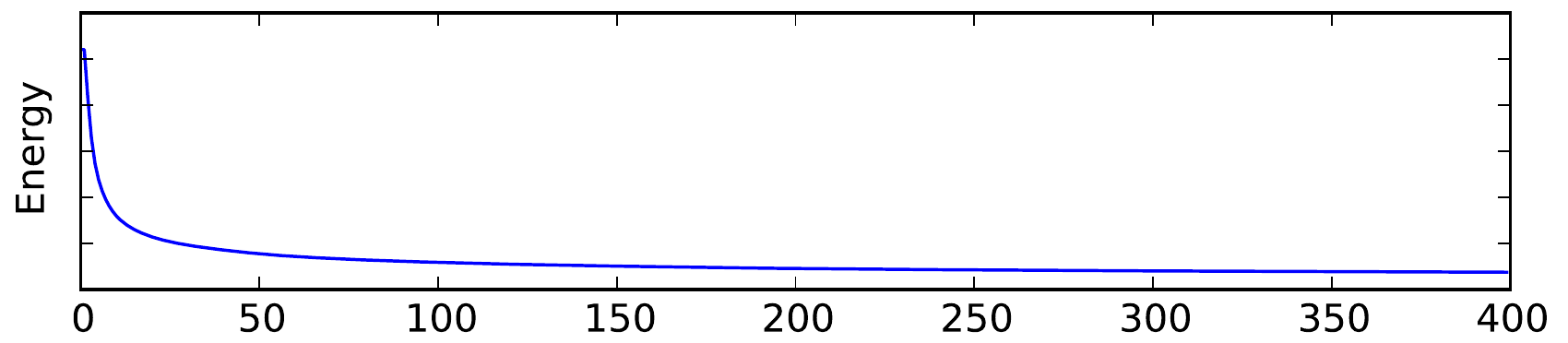}
\\
\begin{tabular}{@{}c@{}c@{}c@{}c@{}c@{}c@{}}
	\includegraphics[width=.16\textwidth]{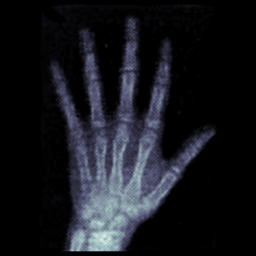}\,
	&	
	\includegraphics[width=.16\textwidth]{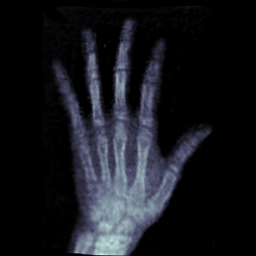}\,
	&	
	\includegraphics[width=.16\textwidth]{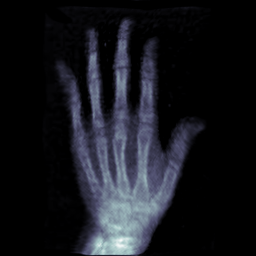}\,
	&	
	\includegraphics[width=.16\textwidth]{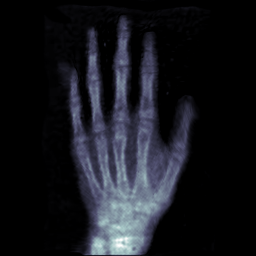}\,
	&	
	\includegraphics[width=.16\textwidth]{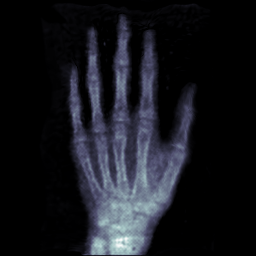}\,
	&	
	\includegraphics[width=.16\textwidth]{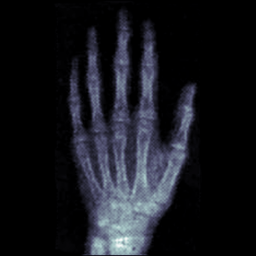}
	\\[-1ex]
	\small{Source} & & & & \small{Final} & \small{Target}
\end{tabular}
\end{center}

\noindent 
Except for the tip of the little finger and the thumb, the resulting path of diffeomorphisms yields a good warp between corresponding bone structures in the hands.
The mesh deformation and Jacobian of the inverse at the final point look as follows.
\begin{center}
\begin{tabular}{@{}cc@{}}
	\small Inverse warp & \small Inverse Jacobian $\abs{D\varphi^{-1}}$ \\
	\includegraphics[height=.4\textwidth]{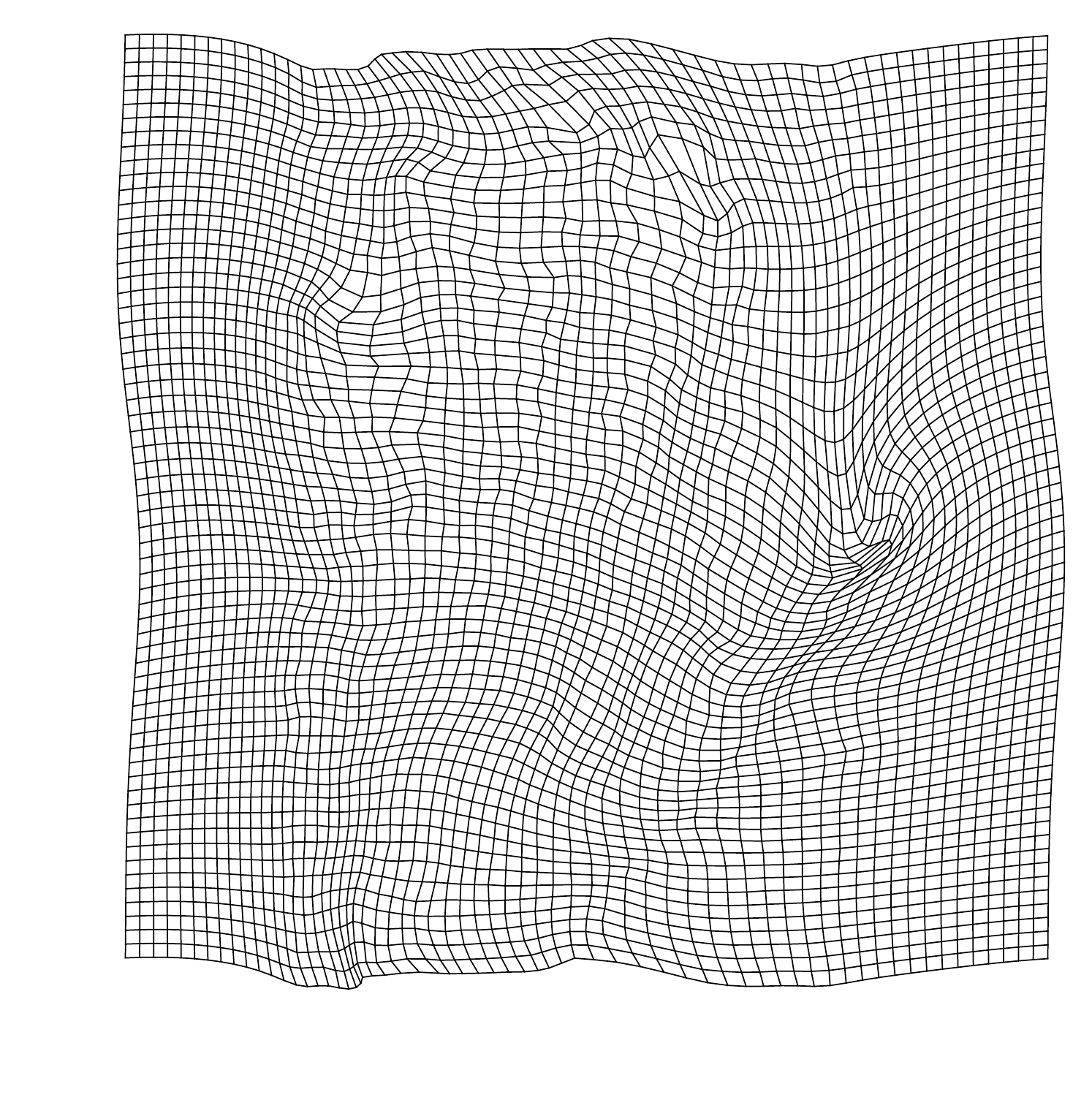}\,
	&	
	\includegraphics[height=.4\textwidth]{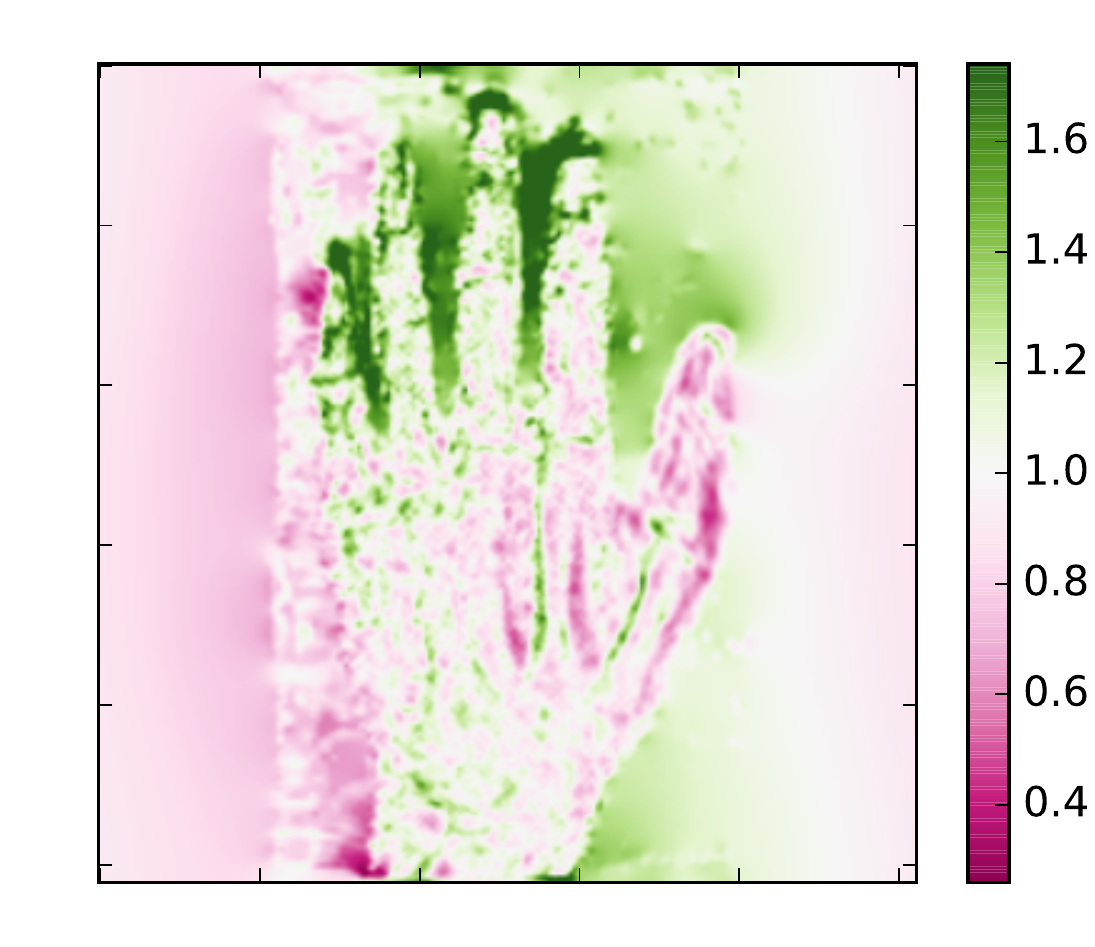}
\end{tabular}
\end{center}
\noindent
Here one can see how the noise affect the diffeomorphisms: both the mesh warp and the Jacobian are somewhat irregular, except at the left and right borders where the source and target densities vanish.

\appendix

\section{Constructing compatible background metrics} \label{sec:choice_of_background_metric}
In \autoref{sec:density_matching} we have described a method to solve the density matching problem, assuming that we are given a compatible background  metric. If the source density $\mu_0$ is not equal to the density induced by the background metric, one has to construct such a metric first (as in~\autoref{ex:compatible:imaging}).  
There is, of course, a range of background metrics $\mathsf{h}$ having a prescribed volume form; here we describe two specific choices.

\subsection{Conformal metric}\label{conformalmetric}
As already discussed, any volume form $\mu\in \Dens(M)$ can be written
$\mu=I\vol_{\g}$.
Note that $I$ is the Radon--Nikodym derivative of $\mu$ with respect to $\vol_{\g}$ (as measures). 
This observation yields a first choice for the desired metric.
\begin{lemma}
Let $\mu=I\vol_{\g} \in \Dens(M)$. 
Then the modified metric 
\begin{equation}\label{eq:conformal}
	\mathsf{h} = I^{\frac{2}n} \g
\end{equation}
is compatible with $\mu$, i.e., $\vol_{\mathsf{h}} = \mu$.
\end{lemma}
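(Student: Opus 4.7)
The plan is to use the coordinate formula \eqref{eq:volume_form_induced} for the volume form induced by a Riemannian metric and the behavior of determinants under conformal rescaling. Specifically, if I write $\mathsf{h} = \lambda \g$ for a positive function $\lambda \in C^\infty(M)$, then in any oriented chart the matrix of $\mathsf{h}$ is $\lambda$ times the matrix of $\g$, so $\abs{\mathsf{h}} = \lambda^n \abs{\g}$ and hence $\sqrt{\abs{\mathsf{h}}} = \lambda^{n/2} \sqrt{\abs{\g}}$.

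Applying this with the specific choice $\lambda = I^{2/n}$, I get $\lambda^{n/2} = I$, and therefore, in any oriented chart $x^1,\ldots,x^n$,
\begin{equation*}
    \vol_{\mathsf{h}} = \sqrt{\abs{\mathsf{h}}}\,\dd x^1\wedge\cdots\wedge \dd x^n = I \sqrt{\abs{\g}}\,\dd x^1\wedge\cdots\wedge \dd x^n = I\,\vol_{\g} = \mu,
\end{equation*}
which is the claim.

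The only thing to verify beyond this routine determinant identity is that $\mathsf{h}$ is actually a well-defined Riemannian metric, which follows immediately from $I > 0$ (since $\mu\in\Dens(M)$), ensuring that $I^{2/n}$ is a smooth positive function and hence $I^{2/n}\g$ is positive-definite wherever $\g$ is. There is no genuine obstacle here; the statement is essentially a one-line consequence of how determinants scale under conformal change.
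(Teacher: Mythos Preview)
Your proof is correct and follows essentially the same route as the paper: the paper's argument is the single coordinate identity $\vol_{\mathsf{h}}=\sqrt{\det(\mathsf{h}_{ij})}\,\dd x^1\wedge\cdots\wedge \dd x^n$, and you have simply spelled out the determinant scaling $\abs{\mathsf{h}}=\lambda^n\abs{\g}$ that makes this work. If anything, your version is more complete, since you also check that $I>0$ ensures $\mathsf{h}$ is genuinely a Riemannian metric.
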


\begin{proof}
	In coordinates $(x^1,\ldots,x^n)$ we have 
	\begin{equation*}
	 \vol_{\mathsf{h}}=\sqrt{\det (\mathsf{h}_{ij})}\, dx^1\wedge\ldots\wedge dx^n\,.\qedhere
	\end{equation*}
\end{proof}

\begin{remark}
	Note that the metric~\eqref{eq:conformal} is conformally equivalent to the background metric $\g$. 
	In mechanics, it is called the \emph{Jacobi metric}.
\end{remark}

The  advantage of the metric~\eqref{eq:conformal} is that it is easy to construct and also that the Laplacian and gradient take simple forms (in terms of the Laplacian and gradient of the original background metric $\g$.)
\begin{lemma}
	The Laplacian and gradient of the metric $\mathsf{h}=I^\frac{n}{2}\g$ are given by
	\begin{equation}
		\grad_{\mathsf{h}}(f)= I^{-\frac{n}{2}} \grad_{\g}(f),\qquad \Delta_{\mathsf{h}} f=\frac1I \divv_{\g}(I^{1-\frac{n}{2}} \grad_{\g}(f))
	\end{equation}
\end{lemma}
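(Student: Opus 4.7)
The plan is to derive both formulas directly from first principles, using only the conformal relation $\mathsf{h} = I^{2/n}\g$ and the volume identity $\vol_{\mathsf{h}} = I\vol_{\g}$ established in the preceding lemma. Since the Laplacian here acts on scalar functions, I can work with the concrete identity $\Delta = \divv\grad$ and avoid any Hodge-theoretic or curvature subtleties.

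First I would handle the gradient. By the defining property of the musical isomorphism, $\grad_{\mathsf{h}} f$ is the unique vector field with $\mathsf{h}(\grad_{\mathsf{h}} f, X) = df(X)$ for every $X \in \Xcal(M)$. Substituting $\mathsf{h} = I^{2/n}\g$ yields $I^{2/n}\g(\grad_{\mathsf{h}} f, X) = \g(\grad_{\g} f, X) = df(X)$, and non-degeneracy of $\g$ immediately forces the pointwise identity $\grad_{\mathsf{h}} f = I^{-2/n}\grad_{\g} f$. This is the first formula.

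Next I would relate the two divergences. Applying Leibniz to the Lie derivative on top-forms,
\begin{equation*}
\LieD_X(I\vol_{\g}) = (X\cdot I)\vol_{\g} + I\,\divv_{\g}(X)\vol_{\g} = \divv_{\g}(IX)\vol_{\g}.
\end{equation*}
Since $\vol_{\mathsf{h}} = I\vol_{\g}$, the left-hand side also equals $\divv_{\mathsf{h}}(X)\cdot I\vol_{\g}$, so after dividing by $I$,
\begin{equation*}
\divv_{\mathsf{h}}(X) = \tfrac{1}{I}\,\divv_{\g}(IX).
\end{equation*}
Composing this with the gradient formula,
\begin{equation*}
\Delta_{\mathsf{h}} f = \divv_{\mathsf{h}}(\grad_{\mathsf{h}} f) = \tfrac{1}{I}\divv_{\g}\!\bigl(I \cdot I^{-2/n}\grad_{\g} f\bigr) = \tfrac{1}{I}\divv_{\g}\!\bigl(I^{1-2/n}\grad_{\g} f\bigr),
\end{equation*}
which is the second formula.

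The argument is essentially bookkeeping, and there is no real obstacle. The only point to watch is that the specific conformal exponent $2/n$ is forced by $\vol_{\mathsf{h}} = I\vol_{\g}$; it is precisely what produces the clean exponent $1-2/n$ inside the divergence and keeps the factor outside equal to $1/I$. If one worked with a general conformal factor $e^{2\sigma}\g$, the two dimension-dependent powers would have to be recomputed, but the structural argument (gradient via the inverse metric, divergence via the Lie derivative of the volume form) is identical.
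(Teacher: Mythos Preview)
Your proof is correct and follows essentially the same route as the paper: derive the gradient from the defining relation $\mathsf{h}(\grad_{\mathsf{h}} f,X)=df(X)$, relate the two divergences through the volume form, and compose. The only cosmetic difference is that you obtain $\divv_{\mathsf{h}}(X)=\tfrac{1}{I}\divv_{\g}(IX)$ via the Lie derivative of $\vol_{\mathsf{h}}=I\vol_{\g}$, whereas the paper writes out the same identity in local coordinates using $\sqrt{\det \mathsf{h}_{ij}}$.
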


\begin{proof}
	To calculate the expression for the gradient we use
	\begin{align*}
		df(X)= \mathsf{h}(\grad_{\mathsf{h}}(f),X) = \g(I^\frac{n}{2}\grad_{\mathsf{h}}(f),X)=\g(\grad_{\g}(f),X)
	\end{align*}
	For the Laplacian we express the divergence in coordinates
	\begin{align*}
		\divv_g(X)=\frac{1}{\sqrt{\det \mathsf{h}_{ij}}}\partial_i(\sqrt{\det \mathsf{h}_{ij}}\,X^i)=\frac1I \partial_i(I\,X^i)=\frac1I \divv_{\g}(IX)\,.
	\end{align*}
	Then we have
	\begin{align*}
		\Delta_{\mathsf{h}} f &=  \divv_{\mathsf{h}}(\grad_{\mathsf{h}}(f))=\divv_{\mathsf{h}} (I^{-\frac{n}{2}} \grad_{\g}(f))=\frac1I \divv_{\g}(I \cdot I^{-\frac{n}{2}} \grad_{\g}(f))\\&=\frac1I \divv_{\g}(I^{1-\frac{n}{2}} \grad_{\g}(f))\qedhere
	\end{align*}
\end{proof}

\begin{remark}
	 Note that for $n=2$ the formula for the Laplacian simplifies to
	$$\Delta_{\mathsf{h}} =\frac1I \Delta_{\g}\,,$$ reflecting the scale invariance of the Laplacian in  dimension two.
\end{remark}

\begin{corollary}
Using the metric~\eqref{eq:conformal} the lifting equation \eqref{eq:veq} reads
\begin{equation}
	\begin{split}
		\dot\ph(t) &= v(t)\circ\ph(t), \quad \ph(0) = \on{id}, \\
		v(t) &= I^{-\frac{n}{2}} \grad_{\g}(f), \\
		\frac1I \divv_{\g}(I^{1-\frac{n}{2}} \grad_{\g}(f)) &= \frac{\dot\mu(t)}{\mu(t)}\circ \ph(t)^{-1}\, .
	\end{split}
\end{equation}
\end{corollary}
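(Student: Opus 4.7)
The corollary is essentially a substitution result, so the plan is short. The goal is to take the abstract lifting equations~\eqref{eq:veq} from \autoref{thm:lifting}, which are stated for an arbitrary compatible background metric, and rewrite them with respect to the explicit conformal choice $\mathsf{h} = I^{2/n}\g$ using the formulas established in the preceding lemma.

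First, I would verify that \autoref{thm:lifting} applies with background metric $\mathsf{h}$. This is immediate: the first lemma in this subsection shows that $\vol_{\mathsf{h}} = \mu$, so for a path $\mu(t)$ with $\mu(0)=\mu_0$, the choice $\g \leftarrow \mathsf{h}$ satisfies the compatibility hypothesis $\vol_{\mathsf{h}} = \mu(0)$ needed to invoke \autoref{lemma:density_lifting}. Hence the system
\[
	\Delta_{\mathsf{h}} f(t) = \frac{\dot\mu(t)}{\mu(t)}\circ \ph(t)^{-1}, \qquad v(t) = \grad_{\mathsf{h}}(f(t)), \qquad \dot\ph(t)=v(t)\circ\ph(t),
\]
with $\ph(0)=\id$, is the correct form of~\eqref{eq:veq} in this setting.

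Second, I would simply substitute the expressions provided by the second lemma of the subsection, namely
\[
	\grad_{\mathsf{h}}(f) = I^{-n/2}\grad_{\g}(f), \qquad \Delta_{\mathsf{h}} f = \tfrac{1}{I}\divv_{\g}\bigl(I^{1-n/2}\grad_{\g}(f)\bigr),
\]
into the three equations above. The transport equation for $\ph$ does not involve the metric and is carried over unchanged; the velocity equation becomes $v(t)=I^{-n/2}\grad_{\g} f(t)$; and the Poisson equation becomes $\tfrac{1}{I}\divv_{\g}(I^{1-n/2}\grad_{\g} f) = (\dot\mu/\mu)\circ\ph^{-1}$. This reproduces the claimed system exactly.

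There is no genuine obstacle here: all the analytic content (existence, uniqueness, horizontality, well-posedness of the Poisson inversion) is packaged in \autoref{thm:lifting} and the corresponding conformal Laplacian/gradient identities are proved in the preceding lemma. The only care required is to make sure the compatibility condition of \autoref{lemma:density_lifting} is interpreted with respect to $\mathsf{h}$ rather than $\g$; once that observation is made, the corollary is a one-line substitution.
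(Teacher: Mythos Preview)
Your proposal is correct and matches the paper's approach: the corollary is stated without proof because it follows immediately by substituting the conformal gradient and Laplacian formulas from the preceding lemma into the lifting equations of \autoref{thm:lifting}, which is precisely what you do. The only content is checking that \autoref{thm:lifting} applies with $\mathsf{h}$ in place of $\g$, and you handle that correctly via the compatibility $\vol_{\mathsf{h}}=\mu_0$ established in the first lemma of the subsection.
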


\subsection{Constructing a flat compatible metric}\label{flatmetric}
For a flat background metric $\g$, the Jacobi metric $\mathsf{h}$ given by~\eqref{eq:conformal} is not, in general, flat.
Indeed, $\mathsf{h}$ is only flat if~$I$ is constant. 
Now we will describe a method to choose a flat background metric assuming that the background metric $\g$ is flat.

\begin{lemma}
	Assume that $M$ carries a flat background metric $\g$. 
	Let $\mu\in \Dens(M)$ and let $\varphi$ be the solution of \autoref{prob:oit} with $\mu_0 = \vol$ and $\mu_1 = \mu$.
	Then $\mathsf{h}=\varphi^*\g$ is a flat metric and $\vol_{\mathsf{h}} = \mu$.
\end{lemma}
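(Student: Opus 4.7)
My plan is to prove the two claims—flatness and $\vol_{\mathsf{h}}=\mu$—separately, both via a direct naturality argument: flatness is preserved under isometries, and the volume form behaves naturally under pullback by diffeomorphisms.

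First I would verify that $\mathsf{h}\coloneqq\varphi^*\g$ is flat. By the very definition of the pullback metric, $\varphi\colon(M,\mathsf{h})\to(M,\g)$ is tautologically an isometry, so the Riemann curvature tensors are related by $R_{\mathsf{h}}=\varphi^*R_{\g}$. Flatness of $\g$ gives $R_{\g}=0$, hence $R_{\mathsf{h}}=0$ and $\mathsf{h}$ is flat. (Of course $\mathsf{h}$ is genuinely Riemannian: since $\varphi$ is a diffeomorphism, $d\varphi$ is pointwise invertible, so $\mathsf{h}$ is positive-definite.)

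Second I would verify $\vol_{\mathsf{h}}=\mu$. In oriented local coordinates, combining \eqref{eq:volume_form_induced} with the componentwise identity $(\varphi^*\g)_{ij}=(\partial_i\varphi^k)(\partial_j\varphi^l)(\g_{kl}\circ\varphi)$ yields
\[
\sqrt{|\varphi^*\g|}=|D\varphi|\,\bigl(\sqrt{|\g|}\circ\varphi\bigr),
\]
which is exactly the local expression of $\varphi^*\vol_{\g}$. Hence $\vol_{\mathsf{h}}=\varphi^*\vol$. To conclude, I would invoke the OIT constraint on $\varphi$ in its pullback form, $\varphi^*\vol=\mu$; this is precisely the form used throughout \autoref{sub:lifting}, and the existence of such a $\varphi$ is guaranteed by \autoref{thm:solution_exact} (the endpoint of \autoref{alg:lifting}). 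Substituting gives $\vol_{\mathsf{h}}=\mu$.

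\textbf{Main obstacle.} There is essentially no obstacle: the proof consists of two applications of naturality. The only point requiring care is a convention check—whether the $\varphi$ in the lemma is the solution of \autoref{prob:oit} (for which $\varphi_*\vol=\mu$) or its inverse (for which $\varphi^*\vol=\mu$, the object actually produced by the lifting algorithm). The conclusion $\vol_{\mathsf{h}}=\mu$ with $\mathsf{h}=\varphi^*\g$ matches the pullback convention, which is the one consistently adopted in the lifting framework of Section~\ref{sec:density_matching}.
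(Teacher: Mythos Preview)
Your proof is correct and follows essentially the same approach as the paper: both claims are obtained by naturality (of the curvature tensor and of the volume form) under pullback by a diffeomorphism. Your flagging of the push-forward versus pull-back convention for the OIT solution is apt—the paper's own proof implicitly uses the pull-back form $\varphi^*\vol=\mu$ without comment—so your treatment is, if anything, a bit more careful on this point.
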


\begin{proof}
	We have $\vol_{\ph^*\mathsf{h}}=\ph^*\vol_{\mathsf{h}}$ for any metric $\mathsf{h}$ and therefore $\mathsf{h}=\ph^*\g$
	has the desired volume density. 
	The flatness follows since the curvature tensor $\mathcal R_{\g}$ of $\g$ satisfies $0= \mathcal R_{\g}=\ph_{*} \mathcal R_{\ph^* \g}$.
\end{proof}

\subsection{Symmetric matching}
In the previous section we have described how to choose a metric that is compatible with a fixed volume form. 
Thus, the solution of \autoref{prob:exact} with respect to the Fisher--Rao metric \eqref{eq:fisher_rao_metric} can be obtained as follows:
\begin{enumerate}
 \item Given $\mu_0$, $\mu_1 \in\Dens(M)$, construct a background metric $\mathsf{h}$ compatible with $\mu_0$ by \autoref{conformalmetric} or \autoref{flatmetric}.
 \item Solve the lifting equation \eqref{eq:veq} with respect to~$\mathsf{h}$, using the explicit geodesic curve~\eqref{eq:fisher_rao_geodesics} between $\mu_0$ and $\mu_1$.
\end{enumerate}
This problem is not symmetric in $\mu_0$ and $\mu_1$, since the metric $\mathsf{h}$ depends on~$\mu_0$.
However, the condition that $\mathsf{h}$ is compatible with $\mu_{0}$ can be made more general, which will allow us to derive a symmetric solution.
We use the fact that any geodesic that is horizontal at some~$s\in (0,1)$ remains horizontal for all time. 
Thus we do not have to choose a metric that is compatible with one of the prescribed densities $\mu_0$ and $\mu_1$, but only a metric 
that is compatible with some density that lies on the geodesic connecting the two. 
This suggests the following choice of  background metric $\mathsf{h}$, which leads to a symmetric solution of \autoref{prob:exact}:
\begin{enumerate}
 \item Calculate the geodesic curve $\mu(t)$ connecting $\mu_0$ to $\mu_1$ using~\eqref{eq:fisher_rao_geodesics}.
 \item Construct a background metric $\mathsf{h}$ that is compatible with the midpoint of the geodesic $\mu(\frac12)$ using \autoref{conformalmetric} or \autoref{flatmetric}.
 \item Solve the lifting equations for the part of the geodesic that connects the densities $\mu(\frac12)$ and $\mu_1$ with respect to the compatible metric~$\mathsf{h}$. 
 Denote the resulting deformation~$\ph$.
 \item Solve the lifting equations for the part of the geodesic that connects the densities $\mu(\frac12)$ and $\mu_0$ with respect to the compatible metric~$\mathsf{h}$. 
 Denote the resulting deformation~$\psi$.
 \item The solution is then given by $\ph\circ\psi^{-1}$.
\end{enumerate}
The symmetry of this strategy follows from the construction.

\section{Relation between Fisher--Rao and other distances}
\label{sec:relation_between_the_wasserstein_and_fisher_rao_distances}

In this section we compare the Fisher--Rao distance $\distF(\cdot,\cdot)$ with the Wasserstein distance $d^{\on{W}}(\cdot,\cdot)$, the Hellinger distance $d^{\on{H}}(\cdot,\cdot)$, the total variation distance $d^{\on{TV}}(\cdot,\cdot)$, the Kullback--Leibler divergence $d^{\on{KL}}(\cdot,\cdot)$ and the $\chi^2$-distance $d^{\chi}(\cdot,\cdot)$ (see \cite{GiSu2002} for definitions).
Note that the Kullback-Leibler divergence and the $\chi^2$-distance are not metrics, as they are not symmetric.

The following inequalities for probability distances are given by \citet{GiSu2002}.
\begin{enumerate}
 \item $\frac{d^{\on{W}}(\mu_0,\mu_1)}{\on{sup}\left(d^M(x,y)\right)}\leq d^{\on{H}}(\mu_0,\mu_1)\leq \on{inf}\left( d^M(x,y)\right)\, d^{\on{W}}(\mu_0,\mu_1)$ \label{WvsH}
 \item $d^{\on{TV}}(\mu_0,\mu_1)\leq d^{\on{H}}(\mu_0,\mu_1)  \leq  \sqrt{d^{\on{TV}}(\mu_0,\mu_1)}$\label{HvsTV}
 \item $d^{\on{H}}(\mu_0,\mu_1)\leq \sqrt{d^{\on{KL}}(\mu_0,\mu_1)}$\label{HvsKL}
 \item $d^{\on{H}}(\mu_0,\mu_1)\leq \sqrt{d^{\on{\chi}}(\mu_0,\mu_1)}$\label{HvsChi}
\end{enumerate}
Here, $d^{M}$ is the distance on $M$.
The lower bound in \autoref{WvsH} is meaningful only for bounded $M$ and the upper bound only for discrete $M$.
Using these inequalities, and a comparison of the Fisher-Rao distance with the Hellinger distance, we obtain the following result.

\begin{proposition}
	For any two densities $\mu_0$, $\mu_1\in \Dens(M)$ we have
	\begin{enumerate}
		 \setcounter{enumi}{4}
		 \item $d^{\on{H}}(\mu_0,\mu_1)\leq d^{\on{FR}}(\mu_0,\mu_1)\leq \frac{\pi}{2} d^{\on{H}}(\mu_0,\mu_1)$\label{WvsFR}
		 \item $\frac{d^{\on{W}}(\mu_0,\mu_1)}{\on{sup}\left(d^M(x,y)\right)}\leq d^{\on{FR}}(\mu_0,\mu_1)\leq \frac{\pi}2\on{inf}\left( d^M(x,y)\right)\, d^{\on{W}}(\mu_0,\mu_1)$. 
		 \item $d^{\on{TV}}(\mu_0,\mu_1)\leq d^{\on{FR}}(\mu_0,\mu_1)  \leq  \sqrt{\frac{\pi}{2} d^{\on{TV}}(\mu_0,\mu_1)}$
		 \item $d^{\on{FR}}(\mu_0,\mu_1)\leq \sqrt{\frac{\pi}{2} d^{\on{KL}}(\mu_0,\mu_1)}$
		 \item $d^{\on{FR}}(\mu_0,\mu_1)\leq \sqrt{\frac{\pi}{2} d^{\on{\chi}}(\mu_0,\mu_1)}$
	\end{enumerate}
\end{proposition}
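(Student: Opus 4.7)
The plan is to reduce the entire proposition to a single elementary trigonometric comparison, namely inequality~\autoref{WvsFR} relating Fisher--Rao and Hellinger distances. Once \autoref{WvsFR} is established, inequalities 6--9 follow immediately by concatenation with the Gibbs--Su bounds \autoref{WvsH}--\autoref{HvsChi} cited just above the proposition.

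First I would prove \autoref{WvsFR} by direct computation. Both $d^{\on{FR}}$ and $d^{\on{H}}$ depend on $(\mu_0,\mu_1)$ only through the Bhattacharyya-type quantity $\bar c \coloneqq \frac{1}{\vol(M)}\int_M \sqrt{f_0 f_1}\,\vol \in [0,1]$, where $f_i = \mu_i/\vol$. By~\eqref{eq:FR_distance} we have $d^{\on{FR}} = \sqrt{\vol(M)}\,\theta$ with $\theta \coloneqq \arccos(\bar c) \in [0,\pi/2]$, and the half-angle identity $1 - \bar c = 2\sin^2(\theta/2)$ rewrites the Hellinger distance (in the normalization used in \cite{GiSu2002}) as $d^{\on{H}} = \sqrt{\vol(M)}\cdot\sqrt{2}\,\sin(\theta/2)$. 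After cancelling the common $\sqrt{\vol(M)}$, \autoref{WvsFR} reduces to the elementary statement
\begin{equation*}
    \sqrt{2}\,\sin(\theta/2) \;\leq\; \theta \;\leq\; \tfrac{\pi}{\sqrt{2}}\,\sin(\theta/2) \qquad (\theta\in[0,\pi/2]).
\end{equation*}
The lower bound is immediate from $\sin x \leq x$. For the upper bound, it suffices to check that $\theta \mapsto \sin(\theta/2)/\theta$ is monotonically decreasing on $(0,\pi/2]$; by a routine differentiation this reduces to the classical inequality $\tan u \geq u$ on $[0,\pi/2)$ with $u = \theta/2$. The minimum value is therefore $\sin(\pi/4)/(\pi/2) = \sqrt{2}/\pi$, attained at $\theta = \pi/2$, which both yields the required upper bound and shows that the constant $\pi/2$ in \autoref{WvsFR} is sharp.

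For inequalities 6--9, a direct substitution of \autoref{WvsFR} into the Gibbs--Su estimates finishes the argument. Inequality 6 comes from plugging $d^{\on{H}} \leq d^{\on{FR}} \leq \tfrac{\pi}{2}\, d^{\on{H}}$ into the two sides of \autoref{WvsH}; inequality 7 combines the same bound with \autoref{HvsTV}, where the square root on the right-hand side absorbs the $\pi/2$ factor into $\sqrt{\pi/2}$; and inequalities 8, 9 follow by applying the upper half of \autoref{WvsFR} to \autoref{HvsKL} and \autoref{HvsChi} respectively. The only genuine piece of work is the two-sided trigonometric inequality above; I do not expect any real obstacle beyond being careful about the Hellinger normalization of \cite{GiSu2002} so that the numerical constants come out as stated.
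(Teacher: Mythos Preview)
Your proposal is correct and follows exactly the paper's own route: establish item~5 by comparing the spherical (arc) distance $d^{\on{FR}}$ to the chordal (Euclidean) Hellinger distance on the sphere, and then obtain items~6--9 by concatenating with the Gibbs--Su inequalities \autoref{WvsH}--\autoref{HvsChi}. You merely spell out the trigonometric estimate that the paper compresses into the phrase ``compare the spherical distance to the Euclidean distance, yielding a factor $\pi/2$''.
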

\begin{proof}
	To prove the first inequality we recall that  $d^{\on{FR}}(\mu_0,\mu_1)$ equals the spherical Hellinger distance, see \autoref{sub:fisher_metric}. Thus we only have to compare the spherical distance to the Euclidean distance, yielding a factor $\frac{\pi}{2}$. 
	Now the other inequalities follow immediately using \autoref{WvsH} to \autoref{HvsChi}.
\end{proof}

Recall from \autoref{sec:setting} that the Fisher-Rao metric arises as projection from a metric on $\Diff(M)$.
Likewise, there is a metric on $\Dens(M)$ corresponding to the Wasserstein distance that is the projection of a (non-right invariant)
metric on the diffeomorphism group~\cite{Ot2001}. 
To our knowledge, it is not known whether similar statements holds for the other distance functions. 

\bibliographystyle{amsplainnat}
\bibliography{density_matching}

\end{document}